   \definecolor{cites}{rgb}{0.75 , 0.00 , 0.00}  % colour for citations
   \definecolor{urls} {rgb}{0.00 , 0.00 , 1.00}  % colour for URL's
   \definecolor{links}{rgb}{0.00 , 0.00 , 0.5}   % colour for links
  \definecolor{gray}{rgb}{0.5,0.5,.5}
\newcommand{\B}{\mathbb{B}}
\newcommand{\C}{\mathbb{C}}
\newcommand{\D}{\mathbb{D}}
\newcommand{\N}{\mathbb{N}}
\newcommand{\R}{\mathbb{R}}
\newcommand{\Bc}{\mathcal{B}}
\newcommand{\Kc}{\mathcal{K}}
\newcommand{\Lc}{\mathcal{L}}
\newcommand{\Tf}{\mathfrak{T}}
\newcommand{\BDO}{\textup{BDO}}
\renewcommand{\epsilon}{\varepsilon}
\newcommand{\vertiii}[1]{{\left\vert\kern-0.25ex\left\vert\kern-0.25ex\left\vert #1
    \right\vert\kern-0.25ex\right\vert\kern-0.25ex\right\vert}}
\newcommand{\vertiiis}[1]{{\vert\kern-0.25ex\vert\kern-0.25ex\vert #1
    \vert\kern-0.25ex\vert\kern-0.25ex\vert}}
\DeclareMathOperator{\Aut}{Aut}
\DeclareMathOperator{\BUC}{BUC}
\DeclareMathOperator{\diam}{diam}
\DeclareMathOperator{\dist}{dist}
\DeclareMathOperator{\ess}{ess}
\DeclareMathOperator{\MO}{MO}
\DeclareMathOperator{\Osc}{Osc}
\DeclareMathOperator*{\slim}{s-\lim}
\DeclareMathOperator{\spec}{sp}
\DeclareMathOperator{\supp}{supp}
\DeclareMathOperator{\VMO}{VMO}
\DeclareMathOperator{\VO}{VO}
\DeclareMathOperator*{\wlim}{w-\lim}
\newcommand{\from}{\colon}
\providecommand{\abs}[1]{\left\lvert#1\right\rvert}
\providecommand{\norm}[1]{\left\lVert#1\right\rVert}
\providecommand{\set}[1]{\left\{ #1\right\}}
\newtheorem{thm}{Theorem}
\newtheorem{mthm}{Theorem}
\newtheorem{lem}[thm]{Lemma}
\newtheorem{prop}[thm]{Proposition}
\newtheorem{cor}[thm]{Corollary}
\newtheorem*{cor*}{Corollary}
\theoremstyle{definition}
\newtheorem{defn}[thm]{Definition}
\theoremstyle{remark}
\newtheorem{rem}[thm]{Remark}
\numberwithin{equation}{section}
\begin{document}
\title{\bf Limit Operators, Compactness and Essential Spectra on Bounded Symmetric Domains}
%\date{}
\author{Raffael Hagger\footnote{Institut f\"ur Analysis, Leibniz Universit\"at, 30167 Hannover, Germany, raffael.hagger@math.uni-hannover.de}}
\maketitle
\vspace{-0.4cm}
\begin{abstract}
This paper is a follow-up to a recent article about the essential spectrum of Toeplitz operators acting on the Bergman space over the unit ball. As mentioned in the said article, some of the arguments can be carried over to the case of bounded symmetric domains and some cannot. The aim of this paper is to close the gaps to obtain comparable results for general bounded symmetric domains. In particular, we show that a Toeplitz operator on the Bergman space $A^p_{\nu}$ is Fredholm if and only if all of its limit operators are invertible. Even more generally, we show that this is in fact true for all band-dominated operators, an algebra that contains the Toeplitz algebra. Moreover, we characterize compactness and explain how the Berezin transform comes into play. In particular, we show that a bounded linear operator is compact if and only if it is band-dominated and its Berezin transform vanishes at the boundary. For $p = 2$ ``band-dominated'' can be replaced by ``contained in the Toeplitz algebra''.

\medskip
\textbf{AMS subject classification:} Primary: 47B35; Secondary: 32A36, 47A53, 47A10

\medskip
\textbf{Keywords:} Toeplitz operators, Bergman space, bounded symmetric domains, compactness, essential spectrum, limit operators, band-dominated operators, essential norm
\end{abstract}

\section{Introduction} \label{introduction}

In the introduction of \cite{Hagger} it was mentioned that ``similar results are expected to hold for more general domains'' and that ``there are some open problems in the most general case''. In short, the aim of this paper is to solve these open problems and thus prove the ``similar results''. As it turns out, the solution not only generalizes the domain, but also the set of eligible operators.

Before we jump into details, let us first recall the basic setting. Let $\Omega$ denote a bounded symmetric domain in its Harish-Chandra realization and let $L^p_{\nu} := L^p(\Omega,v_{\nu})$ denote the corresponding $L^p$-space for some weighted Lebesgue measure $v_{\nu}$ and $p \in (1,\infty)$. Now consider the (closed) subspace of holomorphic functions $A^p_{\nu} \subset L^p_{\nu}$ and assume that there is a bounded projection $P_{\nu}$ onto $A^p_{\nu}$. Then for every bounded function $f \from \Omega \to \C$ we may consider the corresponding Toeplitz operator, which is defined by
\[T_fg = P_{\nu}(f \cdot g)\]
for $g \in A^p_{\nu}$. Denote by $\Tf_{p,\nu}$ the Banach algebra generated by all such Toeplitz operators.

A natural (and non-trivial) question to ask is under which conditions a Toeplitz operator $T_f$ is compact (e.g.~\cite{AxZhe,Englis,MiSuWi,MiWi,StroeZhe,Suarez,Zhu88}). For $p = 2$ a satisfactory answer was given by Engli\v{s} in \cite{Englis,Englis_err}, namely, $T_f$ is compact if and only if the Berezin transform of $f$ vanishes at the boundary. In fact, Engli\v{s} showed a little bit more. He showed that if $A$ can be written as a finite sum of finite products of Toeplitz operators, then $A$ is compact if and only if the (generalized) Berezin transform $\Bc(A)$ vanishes at the boundary. This result gives rise to the question whether this is true for all bounded linear operators on $A^p_{\nu}$. One direction is actually quite simple: If $A$ is a compact operator on $A^p_{\nu}$, then $\Bc(A)$ vanishes at the boundary. However, the other direction turns out to be wrong (see e.g.~\cite{AxZhe}). This suggests that there is some condition missing here. For $p = 2$ we observe that the ideal of compact operators has to be fully contained in $\Tf_{2,\nu}$ because $\Tf_{2,\nu}$ is an irreducible $C^*$-algebra and contains non-trivial and hence all compact operators. In the case of the unit ball $\B^n$, Su\'{a}rez (\cite{Suarez}, see \cite{MiSuWi} for the weighted case) proved that this remains true for arbitrary $p \in (1,\infty)$. Hence the new conjecture would read ``$A$ is compact if and only if $A \in \Tf_{p,\nu}$ and $\Bc(A)$ vanishes at the boundary''. For the unit ball this was shown in \cite{MiSuWi,Suarez} and it is widely conjectured that this holds for arbitrary bounded symmetric domains (e.g.~in \cite{MiWi}). We now show this conjecture in the case $p = 2$ and present an alternative description for general $p$ by using band-dominated operators, which were introduced by the author in \cite{Hagger}. More precisely, we show that a bounded linear operator on $A^p_{\nu}$ is compact if and only if it is band-dominated and its Berezin transform vanishes at the boundary:

\renewcommand*{\themthm}{A}
\begin{mthm} \label{A}
An operator $K \in \Lc(A^p_{\nu})$ is compact if and only if $K$ is band-dominated and
\[\lim\limits_{z \to \partial\Omega} (\Bc(K))(z) = 0.\]
\end{mthm}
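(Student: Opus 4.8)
The plan is to prove the two implications separately, with the forward implication being routine and the reverse one carrying the real content. For the forward direction, suppose $K$ is compact. Every compact operator is band-dominated, since the band-dominated algebra contains the ideal of compacts, so only the Berezin transform needs attention. Writing $(\Bc(K))(z) = \langle K k_z, k_z \rangle$ with the normalized reproducing kernels $k_z$ (and the appropriate $L^p$--$L^{p'}$ pairing), the fact that $k_z \to 0$ weakly as $z \to \partial\Omega$ together with compactness of $K$ gives $\norm{K k_z} \to 0$, hence $(\Bc(K))(z) \to 0$. This direction uses only the weak-null behaviour of the normalized kernels near the boundary.

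For the reverse direction I would assume $K$ is band-dominated with $\Bc(K)$ vanishing at $\partial\Omega$ and reduce compactness to the vanishing of all limit operators. First I would invoke the limit operator compactness criterion supplied by the machinery this paper develops (the companion to the Fredholm criterion announced in the abstract): a band-dominated operator is compact if and only if all of its limit operators are zero. It therefore suffices to show that every limit operator $K_h = \slim_n U_{z_n}^{-1} K U_{z_n}$, formed along a sequence $z_n \to \partial\Omega$ with $U_z$ the isometry implementing the automorphism $\varphi_z$, vanishes.

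To see this I would exploit the covariance of the Berezin transform, $(\Bc(U_z^{-1} K U_z))(w) = (\Bc(K))(\varphi_z(w))$. Since strong convergence $U_{z_n}^{-1} K U_{z_n} g \to K_h g$ passes through the pairing defining $\Bc$, one obtains $(\Bc(K_h))(w) = \lim_n (\Bc(K))(\varphi_{z_n}(w))$ for each fixed $w \in \Omega$. As $z_n \to \partial\Omega$, the points $\varphi_{z_n}(w)$ also tend to $\partial\Omega$, so the hypothesis forces $(\Bc(K_h))(w) = 0$ for every $w$, i.e. $\Bc(K_h) \equiv 0$. Finally, the injectivity of the Berezin transform---recovered from $\langle K_h k_z, k_w \rangle = 0$ via the (anti-)holomorphic dependence on the two parameters and the density of the kernels in $A^p_\nu$ and its dual---yields $K_h = 0$, completing the argument once combined with the compactness criterion.

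The hard part will be the passage to the strong limit in the covariance identity and the injectivity step for general $p$. On the unit ball the automorphisms and kernels have explicit formulas, but on a general bounded symmetric domain I would instead argue abstractly that $\varphi_{z_n}(w) \to \partial\Omega$ uniformly on compact sets, that the normalized kernels transform controllably under $U_{z_n}$, and that the $L^p$--$L^{p'}$ pairing underlying $\Bc$ stays non-degenerate so that a vanishing Berezin transform still forces the operator to be zero. Establishing these three points in the coordinate-free setting, rather than by explicit computation, is where I expect the bulk of the technical effort to lie.
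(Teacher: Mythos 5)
Your skeleton is the same as the paper's: the forward direction via the ideal property of band-dominated operators plus vanishing of the Berezin transform, and the reverse direction by reducing compactness to the statement that all limit operators vanish (this is exactly Theorem \ref{thm5}, which the paper's proof also invokes) and then killing each limit operator through a covariance identity for $\Bc$ and the injectivity of $\Bc$. Your forward direction is fine; the weak-null-kernel argument is a standard and legitimate shortcut (the paper instead routes it through Proposition \ref{compact_limit_operators}). The gap is in the reverse direction, and it sits exactly at the point you deferred as ``the normalized kernels transform controllably under $U_{z_n}$'' --- this is not a routine verification but the actual crux, and the identity you build on is false for $p \neq 2$.

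Concretely: the compactness criterion of Theorem \ref{thm5} is proved for the limit operators $K_x = \slim_{\gamma} U_{z_\gamma}^pKU_{z_\gamma}^p|_{A^p_{\nu}}$, so these are the operators you must show vanish. But $U_z^p$, while an isometry of $A^p_{\nu}$, is not compatible with the $A^p_{\nu}$--$A^q_{\nu}$ duality pairing defining $\Bc$ unless $p = 2$: one has $(U_z^q|_{A^q_{\nu}})^* = T_{b_z}U_z^p|_{A^p_{\nu}}$, where $T_{b_z}$ is the Toeplitz operator with the unimodular symbol $b_z(y) = h(z,y)^{(\frac{1}{q}-\frac{1}{p})(\nu+g)}/h(y,z)^{(\frac{1}{q}-\frac{1}{p})(\nu+g)}$, and $T_{b_z} \neq I$ for $p \neq 2$. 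As a consequence, $U_z^pk_{\zeta}^{(p)}$ is \emph{not} a unimodular multiple of $k_{\phi_z(\zeta)}^{(p)}$; it is a unimodular multiple of $T_{b_z}^{-1}k_{\phi_z(\zeta)}^{(p)}$, and the covariance identity one actually obtains is
\[(\Bc(U_z^pKU_z^p))(\zeta) = b_z(\zeta)^{-1}\,\bigl(\Bc\bigl(KT_{b_z}^{-1}\bigr)\bigr)(\phi_z(\zeta)),\]
whose right-hand side involves the Berezin transform of the $z$-dependent operator $KT_{b_z}^{-1}$, not of $K$ --- so the hypothesis $\Bc(K) \to 0$ at $\partial\Omega$ cannot be fed in directly. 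The paper repairs precisely this defect: it first proves (Proposition \ref{prop_T_b_x}) that $T_{b_{z_\gamma}}$ and $T_{b_{z_\gamma}}^{-1}$ converge strongly to an invertible operator $T_{b_x}$ along any net $z_\gamma \to x$, then works with $K_xT_{b_x}^{-1} = \slim_{\gamma} U_{z_\gamma}^pK(U_{z_\gamma}^q|_{A^q_{\nu}})^*$ instead of $K_x$, for which the clean twisted covariance \eqref{Berezin_transform_shifted_operator} holds, namely $(\Bc(K_{z_\gamma}T_{b_{z_\gamma}}^{-1}))(\zeta) = b_{z_\gamma}(\zeta)^{-1}(\Bc(K))(\phi_{z_\gamma}(\zeta))$. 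Since $\abs{b_{z_\gamma}(\zeta)} = 1$ and $\phi_{z_\gamma}(\zeta) \to \partial\Omega$, this yields $\Bc(K_xT_{b_x}^{-1}) \equiv 0$, hence $K_xT_{b_x}^{-1} = 0$ by injectivity of $\Bc$, hence $K_x = 0$ by invertibility of $T_{b_x}$. So your conclusion is reachable, but only after constructing the auxiliary operators $T_{b_z}$, proving their strong convergence and uniform invertibility, and computing how $(U_z^q|_{A^q_{\nu}})^*$ acts on the kernels --- a substantive lemma, not a technicality. (For $p = 2$ your argument is complete as written; note also that points of $\beta\Omega \setminus \Omega$ are reached by nets rather than sequences, so all limits must be taken along nets.)
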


Using the argument above, ``band-dominated'' can be replaced by ``contained in the Toeplitz algebra'' for $p = 2$:

\begin{cor*}
An operator $K \in \Lc(A^2_{\nu})$ is compact if and only if $K \in \Tf_{2,\nu}$ and $\lim\limits_{z \to \partial\Omega} (\Bc(K))(z) = 0$.
\end{cor*}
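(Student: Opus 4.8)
The plan is to deduce the corollary from Theorem~\ref{A} together with two structural facts about the Toeplitz algebra at $p = 2$: first, that $\Tf_{2,\nu}$ is contained in the class of band-dominated operators, and second, that $\Tf_{2,\nu}$ contains the full ideal $\Kc(A^2_\nu)$ of compact operators. The first inclusion is already built into the band-dominated framework, since the Toeplitz algebra is one of the motivating examples of a band-dominated algebra (cf.~the abstract), so no new work is needed there. With these two facts in hand, both implications of the corollary reduce immediately to Theorem~\ref{A}.

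For the reverse implication, suppose $K \in \Tf_{2,\nu}$ with $\lim_{z \to \partial\Omega}(\Bc(K))(z) = 0$. Since every element of $\Tf_{2,\nu}$ is band-dominated, $K$ is band-dominated, and Theorem~\ref{A} then yields that $K$ is compact. For the forward implication, suppose $K$ is compact. The easy direction of Theorem~\ref{A} gives $\lim_{z \to \partial\Omega}(\Bc(K))(z) = 0$, while the inclusion $\Kc(A^2_\nu) \subseteq \Tf_{2,\nu}$ gives $K \in \Tf_{2,\nu}$. This settles both directions.

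It remains to establish the inclusion $\Kc(A^2_\nu) \subseteq \Tf_{2,\nu}$, which is the substantive point. Here I would argue exactly as in the $p = 2$ remark of the introduction. On the Hilbert space $A^2_\nu$ the Toeplitz algebra is a $C^*$-algebra, since $T_f^* = T_{\bar{f}}$ shows that the generating set is closed under adjoints. I would then verify that $\Tf_{2,\nu}$ acts irreducibly: any bounded operator commuting with every $T_f$, $f \in L^\infty(\Omega)$, must be a scalar multiple of the identity. This rests on the density of the span of the normalized reproducing kernels together with the transitive action of $\Aut(\Omega)$, which prevents any nontrivial closed subspace from being invariant under all the multiplication-type operators $T_f$. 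Finally, I would exhibit a single nonzero compact operator in $\Tf_{2,\nu}$, for instance a semicommutator $T_f T_g - T_{fg}$ with continuous, compactly supported symbols chosen so that it does not vanish; compactness of such semicommutators is standard, and nonvanishing can be arranged by a suitable choice of $f$ and $g$.

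With irreducibility and a nonzero compact member established, the classical theorem that an irreducible $C^*$-subalgebra of $\Lc(H)$ containing a nonzero compact operator must contain all of $\Kc(H)$ delivers $\Kc(A^2_\nu) \subseteq \Tf_{2,\nu}$. The main obstacle is precisely the verification of irreducibility and the construction of a nonzero compact element of $\Tf_{2,\nu}$; once these $C^*$-algebraic inputs are in place, everything else is an immediate appeal to Theorem~\ref{A}.
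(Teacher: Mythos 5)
Your proposal is correct and follows essentially the same route as the paper: the corollary is deduced from Theorem~\ref{A} together with the inclusion of $\Tf_{2,\nu}$ in the band-dominated operators and the standard fact that $\Tf_{2,\nu}$, being an irreducible $C^*$-algebra containing a nonzero compact operator, must contain all of $\Kc(A^2_{\nu})$. The only difference is that the paper cites these $C^*$-algebraic facts without proof while you sketch their verification; note that your semicommutator construction can be simplified, since a Toeplitz operator $T_f$ with a nonzero, nonnegative, compactly supported symbol is already a nonzero compact element of $\Tf_{2,\nu}$ (by \cite[Proposition 15]{Hagger}).
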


In a similar vein Fredholmness of a band-dominated operator can be characterized. Using the techniques developed in \cite{Hagger}, we show that a band-dominated operator is Fredholm if and only if all of its limit operators are invertible, where limit operators occur as strong limits of certain operator nets (see Section \ref{limit_operators} for a precise definition). One of the key parts here is to actually show the existence of these strong limits. For Toeplitz operators on the unit ball this was done in \cite{MiSuWi,Suarez}. However, the proof there involves some direct computations, which are not accessible in the case of general bounded symmetric domains. We thus use the theory of band-dominated operators once again to show the existence of these limit operators. As a bonus, we obtain existence for all band-dominated operators rather than just the Toeplitz algebra. After the existence is settled, we follow the lines of \cite{Hagger} to obtain our next main result:

\renewcommand*{\themthm}{B}
\begin{mthm} \label{B}
Let $A \in \Lc(A^p_{\nu})$ be band-dominated. Then the following are equivalent:
\begin{itemize}
\item[$(i)$] $A$ is Fredholm,
\item[$(ii)$] $A_x$ is invertible for all $x \in \beta\Omega \setminus \Omega$ and $\sup\limits_{x \in \beta\Omega \setminus \Omega} \norm{A_x^{-1}} < \infty$,
\item[$(iii)$] $A_x$ is invertible for all $x \in \beta\Omega \setminus \Omega$,
\end{itemize}
\end{mthm}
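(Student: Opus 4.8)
The plan is to treat Fredholmness as invertibility in the Calkin-type quotient and to use the family of limit operators $(A_x)_{x \in \beta\Omega \setminus \Omega}$ as a faithful symbol for this quotient. Concretely, writing $\BDO$ for the Banach algebra of band-dominated operators and recalling from Theorem \ref{A} that $\Kc(A^p_\nu) \subset \BDO$, I would first record that the assignment $A \mapsto (A_x)_x$, whose existence was established in the preceding section, descends to an algebra homomorphism on $\BDO / \Kc(A^p_\nu)$; this rests on the fact that the limit operator of a compact operator vanishes. Indeed, if $U_z$ denotes the isometric weighted composition operator induced by the automorphism of $\Omega$ sending $0$ to $z$, then $U_z^{-1} g \to 0$ weakly as $z \to \partial\Omega$ (a property of these nets built into the construction of limit operators), so for compact $K$ we get $\norm{U_z K U_z^{-1} g} \to 0$ and hence $K_x = 0$. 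With this in hand the implication $(ii) \Rightarrow (iii)$ is trivial, and it remains to prove $(i) \Rightarrow (ii)$, $(ii) \Rightarrow (i)$ and the self-improvement $(iii) \Rightarrow (ii)$.

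For $(i) \Rightarrow (ii)$ I would argue as follows. If $A$ is Fredholm, then, using that $\BDO / \Kc(A^p_\nu)$ is inverse closed (a point that has to be extracted from the structure theory of band-dominated operators), there is a band-dominated parametrix $B$ with $AB - I, BA - I \in \Kc(A^p_\nu)$. Applying the symbol map and using $K_x = 0$ for the two compact remainders yields $A_x B_x = B_x A_x = I$ for every $x \in \beta\Omega \setminus \Omega$, so each $A_x$ is invertible with $A_x^{-1} = B_x$. Since the operators $U_z$ are isometries and limit operators arise as strong limits along nets $z \to x$, norms do not increase in the limit, whence $\norm{A_x^{-1}} = \norm{B_x} \le \norm{B}$ uniformly in $x$. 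This gives both the invertibility and the uniform bound required in $(ii)$.

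The converse $(ii) \Rightarrow (i)$ is where one builds a Fredholm regularizer. I would cover a neighbourhood of $\partial\Omega$ by patches on which $A$ is, after conjugation by a suitable $U_z$, uniformly close to one of its invertible limit operators; the uniform bound in $(ii)$ guarantees that the corresponding local inverses are uniformly bounded, so they can be glued by a partition of unity into a single band-dominated operator $B$. A direct estimate then shows that $AB - I$ and $BA - I$ are band-dominated operators whose Berezin transform vanishes at the boundary, so Theorem \ref{A} identifies them as compact and $A$ is Fredholm. This is precisely the point at which the uniform boundedness, rather than mere pointwise invertibility, is indispensable.

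The remaining implication $(iii) \Rightarrow (ii)$ is the genuine difficulty and is what I expect to be the main obstacle, since it cannot be obtained by soft arguments. Here I would exploit that the family of limit operators is self-similar, in the sense that every limit operator of an $A_x$ is again a limit operator of $A$, a consequence of the existence theory developed earlier. Assuming $\sup_x \norm{A_x^{-1}} = \infty$, one chooses $x_n$ with $\norm{A_{x_n}^{-1}} \to \infty$, realizes the $A_{x_n}$ as near-limits of conjugates of $A$, and runs a diagonal extraction over a net converging in the compact space $\beta\Omega \setminus \Omega$ to produce a limit operator that fails to be bounded below, contradicting $(iii)$. Carrying out this extraction carefully, while controlling the passage from strong limits to uniform lower bounds, is the technical heart of the argument; once it is in place, the equivalences $(i) \Leftrightarrow (ii)$ and $(ii) \Leftrightarrow (iii)$ close and all three statements are equivalent.
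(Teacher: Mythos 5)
Your treatment of the two easier implications is essentially the paper's: $(i) \Rightarrow (ii)$ is Proposition \ref{Fredholmness_necessary_condition} (a regularizer $B$ of a Fredholm band-dominated operator is itself band-dominated by Proposition \ref{prop0}, the compact defects have vanishing limit operators, so $A_x^{-1} = B_x$ and $\norm{A_x^{-1}} \leq \norm{B}$), and your gluing sketch for $(ii) \Rightarrow (i)$ is, in spirit, the paper's Theorem \ref{thm1}, where the gluing is packaged into the Fredholm criterion of \cite[Proposition 17]{Hagger} together with Lemma \ref{lem7}; your version leaves unproved that the glued operator is band-dominated and that the defects are compact (and the paper argues by contraposition rather than by exhibiting the Berezin transform of $AB - I$), but the idea is the standard one. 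Also, your claim that $U_z g \to 0$ weakly as $z \to \partial\Omega$ is not ``built into the construction''; it is provable, but the paper establishes $K_x = 0$ differently (Proposition \ref{compact_limit_operators}), and you could simply cite that.

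The genuine gap is in $(iii) \Rightarrow (ii)$, which you rightly call the technical heart but whose proposed mechanism fails. Write $\nu(B) := \inf\set{\norm{Bf} : \norm{f} = 1}$ for the lower norm, so that $\norm{A_x^{-1}} \to \infty$ means $\nu(A_{x_n}) \to 0$. If you extract a subnet with $A_{x_{n_\gamma}} \to A_y$ strongly (Corollary \ref{cor1}), then for every unit vector $f$ you get $\norm{A_y f} = \lim_\gamma \norm{A_{x_{n_\gamma}} f} \geq \limsup_\gamma \nu(A_{x_{n_\gamma}}) = 0$, i.e.\ only the vacuous bound $\nu(A_y) \geq 0$: strong convergence transports lower bounds in the \emph{wrong} direction, and there is no way to conclude that $A_y$ fails to be bounded below. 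The reason is that the almost-kernel vectors $f_n$ witnessing $\nu(A_{x_n}) \approx 0$ may escape every bounded region of $\Omega$, so nothing of them survives in the strong limit; moreover, the ``self-similarity'' you invoke (limit operators of $A_x$ are limit operators of $A$) is nowhere established in the paper and is not what it uses. Overcoming exactly this obstruction is the purpose of the paper's remaining machinery: the localized lower norms $\nu_t$ of Proposition \ref{prop10} confine the witnessing functions to Bergman balls $D(w,r_t)$; Lemma \ref{lem5}, using Cartan's linearity theorem and the identity $U_w^pU_z^p = \theta\, U_{\phi_z(w)}^pV_*$ with $\theta$ unimodular and $V \in K$, re-centers such a function at the origin at the price of replacing the boundary point by another one; and Lemma \ref{lem6} iterates this re-centering along a diagonal sequence with radii $r_{t_{k+1}} > 2r_{t_k}$ so that the relevant functions all live in a fixed ball $D(0,4r_{t_l})$, where strong convergence upgrades to norm convergence $\norm{(\hat{A}_{y_{n_\gamma}} - \hat{A}_y)M_{\chi_{D(0,4r_{t_l})}}} \to 0$ by compactness of $P_\alpha M_{\chi_{D(0,4r_{t_l})}}$, and Lemma \ref{lem8} then passes the lower bounds to the limit, showing that $\inf_x \nu(\hat{A}_x)$ is attained. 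Without this re-centering argument (or an equivalent substitute, cf.\ \cite{LiSe}), the diagonal extraction you describe cannot be closed, and $(iii) \Rightarrow (ii)$ remains unproved.
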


Here, $\beta\Omega$ denotes the Stone-\v{C}ech compactification of $\Omega$ and the operators $A_x$ are the limit operators of $A$. It is worth mentioning that a similar result was also obtained for the Fock space in \cite{FuHa}.

As observed in \cite{Hagger}, the Toeplitz algebra is contained in the set of band-dominated operators (see Section \ref{BDO} and Definition \ref{BDO_on_A^p} for the precise statement) and thus all of the above can be applied to Toeplitz operators. At this point we should probably mention that we use slightly different conventions here than used in previous work. Most importantly, we replaced the compactification of $\Omega$ that labels our limit operators. In \cite{Hagger,MiSuWi,MiWi2,Suarez} and also in \cite{BaIs,FuHa} the maximal ideal space of bounded uniformly continuous functions was used. In this paper we use the Stone-\v{C}ech compactification\footnote{The Stone-\v{C}ech compactification can be seen as the maximal ideal space of bounded continuous functions.} instead to simplify a few arguments. It is then easy to see that one can use smaller compactifications if the operator permits. More precisely, if all limit operators of $A$ with respect to another compactification exist, then the same results hold with the Stone-\v{C}ech compactification replaced by the new compactification. Informally, the more complicated the operator in question behaves towards the boundary, the more complicated compactifications we need. We note that the authors of \cite{MiWi} also used the Stone-\v{C}ech compactification to show convergence of the operator nets, but there was no need to actually label the limit operators at that point. Furthermore, we use a slightly different definition for limit operators here. This change turns out to be merely cosmetic and is certainly just a matter of taste. We refer to Remark \ref{rem_limit_operator_definition} for a short discussion.

Last but not least, we discuss some applications of Theorem \ref{B}. The simplest case one could imagine would be if every limit operator of an operator $A$ was just a multiple of the identity. We show that this happens if and only if the (generalized) Berezin transform of $A$ has vanishing oscillation at the boundary. If this is the case, the essential spectrum of $A$ is equal to the image of the Berezin transform $\Bc(A)$ restricted to the Stone-\v{C}ech boundary of $\Omega$. In particular, this applies to Toeplitz operators whose symbols have vanishing mean oscillation at the boundary:

\begin{cor*}
Let $f \in \VMO_{\partial}(\Omega) \cap L^{\infty}(\Omega)$. Then
\[\spec_{\ess}(T_f) = \tilde{f}(\beta\Omega \setminus \Omega) = \bigcap\limits_{r > 0} \overline{\tilde{f}(\Omega \setminus D(0,r))},\]
where $f$ denotes the Berezin transform of $f$ (or equivalently $T_f$).
\end{cor*}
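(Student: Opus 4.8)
The plan is to obtain this corollary as the specialization to $A = T_f$ of the preceding general statement: if $A \in \Lc(A^p_\nu)$ is band-dominated and its Berezin transform $\Bc(A)$ has vanishing oscillation at the boundary, then every limit operator of $A$ is a scalar multiple of the identity and, via Theorem \ref{B} applied to $A - \lambda$, one has $\spec_{\ess}(A) = \Bc(A)(\beta\Omega \setminus \Omega)$, where $\Bc(A)$ is identified with its continuous extension to $\beta\Omega$. Thus the first equality in the corollary reduces to three facts about $T_f$: that it is band-dominated, that $\Bc(T_f) = \tilde f$, and that $\tilde f \in \VO_\partial(\Omega)$. The second equality is then a purely topological identity for the Stone-\v{C}ech compactification.

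First I would dispose of the two easy reductions. Since $T_f \in \Tf_{p,\nu}$ and the Toeplitz algebra is contained in the band-dominated operators (see Section \ref{BDO}), $T_f$ is band-dominated. The identity $\Bc(T_f) = \tilde f$ is immediate from the definitions: for the normalized reproducing kernels $k_z$ one has $(\Bc(T_f))(z) = \sp{T_f k_z}{k_z} = \sp{f k_z}{k_z} = \tilde f(z)$, so in particular $\tilde f$ is bounded and continuous on $\Omega$ and hence extends continuously to $\beta\Omega$.

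The hard part will be the implication $f \in \VMO_\partial(\Omega) \Rightarrow \tilde f \in \VO_\partial(\Omega)$, i.e. that the mean oscillation of $f$ over Bergman balls controls the genuine oscillation of $\tilde f$. For $z, w$ at a fixed Bergman distance I would estimate $\abs{\tilde f(z) - \tilde f(w)}$ by decomposing the averaging kernel into a part supported on a large Bergman ball $B$ around $z$ and a tail: on $B$ the difference is dominated by the mean oscillation $\MO(f)(z)$, which tends to $0$ as $z \to \partial\Omega$ by the $\VMO_\partial$ hypothesis, while the tail is bounded by $\norm{f}_\infty$ times the kernel mass outside $B$, which is made uniformly small by enlarging $B$. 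Letting $z \to \partial\Omega$ then gives $\Osc(\tilde f)(z) \to 0$ at every fixed radius, which is exactly $\tilde f \in \VO_\partial(\Omega)$. This transference from the symbol to the Berezin transform is the most technical step; it relies on the off-diagonal decay of the normalized reproducing kernel on bounded symmetric domains and is where the $L^\infty$ bound on $f$ enters.

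Granting these three facts, the general statement gives $\spec_{\ess}(T_f) = \tilde f(\beta\Omega \setminus \Omega)$, and it remains to identify this image with the boundary cluster set $\bigcap_{r > 0} \overline{\tilde f(\Omega \setminus D(0,r))}$. This is a general property of the Stone-\v{C}ech compactification: for the continuous extension of a bounded continuous $g$ on the locally compact space $\Omega$ one has $g(\beta\Omega \setminus \Omega) = \bigcap_K \overline{g(\Omega \setminus K)}$ over compact $K \subset \Omega$. The inclusion ``$\subseteq$'' holds because any $x \in \beta\Omega \setminus \Omega$ lies in $\overline{\Omega \setminus K}$ for every compact $K$ (indeed $\beta\Omega = \overline{\Omega \setminus K} \cup K$ and $x \notin K$), so continuity yields $g(x) \in \overline{g(\Omega \setminus K)}$; for ``$\supseteq$'', given $c$ in the right-hand side, the nonempty sets $\set{z \in \Omega \setminus K : \abs{g(z) - c} < \epsilon}$ form a filter base whose refining ultrafilter converges to a point of $\beta\Omega \setminus \Omega$ that is mapped to $c$. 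Finally, since the Bergman balls $D(0,r)$ are relatively compact in $\Omega$ and exhaust it as $r \to \infty$, they form a compact exhaustion, so the intersection over $K$ may be replaced by the intersection over $r > 0$, completing the proof.
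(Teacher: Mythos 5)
Your proposal is correct and, structurally, it is the paper's proof: the paper likewise reduces the corollary to Corollary \ref{cor4} (your ``preceding general statement'', i.e.\ Theorem \ref{thm7} combined with Theorem \ref{thm4}), after which the only substantive point is that $\tilde f = \Bc(T_f)$ lies in $\VO_{\partial}(\Omega)$; note that in the paper $\tilde f$ is \emph{defined} as $\Bc(T_f)$, so your second ``easy fact'' is a definition rather than a lemma. The genuine difference is how the implication $f \in \VMO_{\partial}(\Omega) \cap L^{\infty}(\Omega) \Rightarrow \tilde f \in \VO_{\partial}(\Omega)$ is handled: the paper outsources it entirely, citing \cite[Theorem F, Corollary 2]{BBCZ} for the unweighted case and \cite[Proposition 4.8]{BaCo} for the weighted case, whereas you sketch a direct argument. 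Your sketch is completable, but be aware of where its content sits. Writing $\tilde f(z) = \int_{\Omega} f \, \mathrm{d}\mu_z$ with $\mathrm{d}\mu_z(u) = k_z^{(p)}(u)\overline{k_z^{(q)}(u)}\,\mathrm{d}v_{\nu}(u) = h(z,z)^{\nu+g}\abs{h(u,z)}^{-2(\nu+g)}\,\mathrm{d}v_{\nu}(u)$, the tail estimate is the exact identity $\mu_z(\Omega \setminus D(z,R)) = v_{\nu}(\Omega \setminus D(0,R))$ (property $(viii)$ of Section \ref{BSDs}), which is uniform in $z$; and the claim that ``on $B$ the difference is dominated by $\MO(f)(z)$'' conceals two steps: one must first replace $f$ by $f - \tilde f(z)$ (at the cost of $\norm{f}_{\infty}$ times the small quantity $\abs{\mu_z(B)-\mu_w(B)}$) and then apply Cauchy--Schwarz together with the kernel comparison $\mathrm{d}\mu_w \leq C(R)\,\mathrm{d}\mu_z$ on $B = D(z,R)$, which follows from property $(vii)$ via $\mathrm{d}\mu_w/\mathrm{d}\mu_z = \left(h(\phi_w(u),\phi_w(u))/h(\phi_z(u),\phi_z(u))\right)^{\nu+g} \leq h(\phi_z(u),\phi_z(u))^{-(\nu+g)}$, the last factor being bounded since $\phi_z(u) \in D(0,R)$ for $u \in B$. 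With these two ingredients your quantifier order (fix $R$ depending on $\epsilon$, then let $z \to \partial\Omega$) does give $\Osc_z(\tilde f) \to 0$, and the resulting argument is essentially the one in \cite{BBCZ} that the paper cites, so what your route buys is self-containedness at the price of redoing known kernel estimates. Finally, your ultrafilter proof of $\tilde f(\beta\Omega \setminus \Omega) = \bigcap_{r>0} \overline{\tilde f(\Omega \setminus D(0,r))}$ is correct but redundant relative to the paper: that identity is already part of the statement of Corollary \ref{cor4}, so once you invoke that corollary the second equality comes for free.
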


The paper is organized as follows. We start with a short introduction to bounded symmetric domains in Section \ref{BSDs}. All results in that section are well-known and documented in the literature. In Section \ref{BDO} we recall the definition and some major results of band-dominated operators, which were established in \cite{Hagger} and are then extensively used throughout this paper. Section \ref{limit_operators} then proceeds with the definition of limit operators. In particular, their existence is shown and it will get clear why band-dominated operators are the right objects to consider here. In Section \ref{compactness} we characterize compactness with the help of limit operators and explain the connections to the Berezin transform. We then follow \cite{Hagger} for the characterization of Fredholmness of band-dominated operators in Section \ref{Fredholmness}. In Section \ref{applications} we present some applications to Toeplitz operators.

\section{Bounded Symmetric Domains} \label{BSDs}

In this section we provide definitions and basic properties of bounded symmetric domains. All results in this section are well-known and may be found in the literature (e.g.~\cite{Arazy,Cartan,Englis,Englis2,FaKo,Helgason,Hua,Timoney,Upmeier,Zhu95}).

A domain $\Omega \subset \C^n$ is called a bounded symmetric domain if it is bounded and for every $z \in \Omega$ there exists a biholomorphic involution $\phi_z$ that interchanges $0$ and $z$. A bounded symmetric domain is called irreducible if it is not biholomorphic to a product of two non-trivial domains. The irreducible bounded symmetric domains can be classified as follows\footnote{The order of $II_n$ and $III_n$ is sometimes interchanged. The restrictions on $n$ and $m$ are due to the fact that in small dimensions some of the domains are not irreducible or isomorphic to another domain in the list, e.g.~$I_{1,1}$, $II_1$, $III_2$ and $IV_1$ all describe the unit disk.}:
\begin{itemize}
	\item $I_{n,m}$: unit ball of $n \times m$ complex matrices for $n \geq m \geq 1$
	\item $II_n$: unit ball of $n \times n$ complex symmetric matrices for $n \geq 2$
	\item $III_n$: unit ball of $n \times n$ complex antisymmetric matrices for $n \geq 5$
	\item $IV_n$: the Lie ball $\set{z \in \C^n : \abs{\sum\limits_{j = 1}^n z_j^2} < 1, 1 + \abs{\sum\limits_{j = 1}^n z_j^2}^2 - 2\abs{z}^2 > 0}$ for $n \geq 5$
	\item $V$: the unit ball of $1 \times 2$ matrices over the $8$-dimensional Cayley algebra
	\item $VI$: the unit ball of $3 \times 3$ self-adjoint matrices over the $8$-dimensional Cayley algebra
\end{itemize}
By Cartan's classification theorem, these are all possible cases up to biholomorphisms. We may therefore always assume that $\Omega$ is convex, circular and centered at the origin. This is usually called the Harish-Chandra realization of $\Omega$. Throughout this paper we will assume that $\Omega$ is a irreducible bounded symmetric domain in its Harish-Chandra realization, i.e.~$\Omega$ is equal to one of the cases $I-VI$. Note that case $I_{n,1}$ is isomorphic to the standard unit ball $\B^n$ of $\C^n$.

Let $\Aut(\Omega)$ denote the group of all biholomorphic endomorphisms of $\Omega$ and $G := \Aut(\Omega)_0$ the connected component of $\Aut(\Omega)$ that contains the identity. Moreover, let $K$ denote the subgroup of linear mappings in $G$. By Cartan's linearity theorem, $K$ coincides with the subset of elements in $G$ that stabilize the origin. Therefore $\Omega$ may also be realized as the quotient $G/K$ via $z \mapsto \phi_zK$.

Another description of bounded symmetric domains $\Omega$ can be given in terms of so-called Jordan frames, i.e.~there exists a set of $\R$-linear independent vectors $\set{e_1, \ldots, e_r}$ such that every $z \in \C^n$ can be written as $k\sum\limits_{j = 1}^r t_je_j$ with $k \in K$, $t_j \geq 0$ and it holds $z \in \Omega$ if and only if $t_j < 1$ for all $j \in \set{1, \ldots, r}$. Such a decomposition is called a polar decomposition. The numbers $t_j$ are unique up to permutation and do not depend on the chosen Jordan frame. In particular, the positive integer $r$ is an invariant and called the rank of $\Omega$. In case of the unit ball $\B^n$ the rank is $1$ and $K$ is equal to the full unitary group $U(n)$, i.e.~the polar decomposition is just the usual one with $t_1 = \abs{z}$ and $e_1$ an arbitrary unit vector.

Besides the rank $r$, there are other geometric invariants of $\Omega$. These include the complex dimension $n$, the numbers $a,b$, which have to do with root multiplicities of the Lie algebras associated with $G$ and $K$, and the genus $g := a(r-1)+b+2$. In fact, the triple $(r,a,b)$ determines the bounded symmetric domain $\Omega$ uniquely. However, we do not really need the exact values of these invariants in this paper. We thus refer to \cite{Englis} for a list of numbers.

Using this polar decomposition, we can define the so-called Jordan triple determinant $h \from \C^n \times \C^n \to \C$, which is uniquely determined by the diagonal
\[h(z,z) = \prod\limits_{j = 1}^r (1-t_j^2)\]
and the requirement that $h$ is holomorphic in the first and antiholomorphic in the second argument. In fact, $h = h(z,w)$ is a polynomial in $z$ and $\overline{w}$. This polynomial has a lot of important properties that we will use frequently in this paper. Most of them follow immediately from the definition and/or some complex analysis. Here is a quick summary:
\begin{itemize}
	\item[$(i)$] $\abs{h(z,w)} > 0$ for all $z \in \overline{\Omega}$, $w \in \Omega$
	\item[$(ii)$] $h(z,z) = 0$ for all $z \in \partial\Omega$
	\item[$(iii)$] $h(z,0) = 1$ for all $z \in \C^n$
	\item[$(iv)$] $h(w,z) = \overline{h(z,w)}$ for all $w,z \in \C^n$
	\item[$(v)$] $h(kz,kw) = h(z,w)$ for all $w,z \in \C^n$, $k \in K$
\end{itemize}
Using the Jordan triple determinant, we may define a Riemannian metric on $\Omega$ as follows:
\[g_{ij}(z) := -g\frac{\partial}{\partial z_i}\frac{\partial}{\partial \overline{z}_j} \log h(z,z).\]
The integrated form of this metric will be denoted by $\beta(\cdot,\cdot)$ and is called the Bergman metric on $\Omega$. Note that $\beta$ is unbounded, i.e.~$\beta(0,z) \to \infty$ as $z \to \partial\Omega$. For $\Omega = \B^n$ this metric is given by the usual hyperbolic metric on $\B^n$.

The metric space $(\Omega,\beta)$ satisfies a certain local finiteness condition, which can be formulated as follows. There is a fixed integer $N$ such that for every $t \in (0,1)$ there is a disjoint cover of $\Omega$ by Borel sets $(B_{j,t})_{j \in \N}$ satisfying
\begin{itemize}
	\item for every $z \in \Omega$ the set $\set{j \in \N : \dist_{\beta}(z,B_{j,t}) \leq \frac{1}{t}}$ has at most $N$ elements,
	\item there is a constant $C(t)$ such that $\diam_{\beta}(B_{j,t}) \leq C(t)$ for every $j \in \N$.
\end{itemize}
Setting $\Xi_{j,t,k} := \set{z \in \Omega : \dist_{\beta}(z,B_{j,t}) \leq \frac{k}{3t}}$ for $k = 1,2,3$, we can obtain a subordinate partition of unity consisting of functions $\varphi_{j,t} \from \Omega \to [0,1]$ satisfying
\begin{itemize}
	\item[$(a)$] $\sum\limits_{j = 1}^{\infty} \varphi_{j,t}(z) = 1$ for all $z \in \Omega$,
	\item[$(b)$] $\supp \varphi_{j,t} = \Xi_{j,t,1}$ for all $j \in \N$, $t \in (0,1)$,
	\item[$(c)$] $\abs{\varphi_{j,t}(z) - \varphi_{j,t}(w)} \leq 6Nt\beta(z,w)$ for all $w,z \in \Omega$, $j \in \N$ and $t \in (0,1)$.
\end{itemize}
Similarly, we may obtain functions $\psi_{j,t} \from \Omega \to [0,1]$ satisfying
\begin{itemize}
	\item[$(d)$] $\psi_{j,t}(z) = 1$ for all $z \in \Xi_{j,t,2}$, $j \in \N$ and $t \in (0,1)$,
	\item[$(e)$] $\supp \psi_{j,t} = \Xi_{j,t,3}$ for all $j \in \N$ and $t \in (0,1)$,
	\item[$(f)$] $\abs{\psi_{j,t}(z) - \psi_{j,t}(w)} \leq 3t\beta(z,w)$ for all $w,z \in \Omega$, $j \in \N$ and $t \in (0,1)$.
\end{itemize}
An explicit construction of these functions can be found in \cite[Section 3]{Hagger}. We remark that in the printed version of \cite{Hagger} there is an incorrect reference for the existence of the integer $N$. The correct reference is \cite{CaGo} and was corrected in a later version. The number $N-1$ is called the asymptotic dimension of the metric space and studied in coarse geometry. We refer to \cite{BeDra} for equivalent definitions and an overview of the whole subject, also mentioning the result of \cite{CaGo} that we need here. 

For $z \in \Omega \setminus \set{0}$ we now consider the geodesic reflection in the midpoint between $0$ and $z$ (with respect to $\beta$, of course). This defines an isometric and involutional biholomorphism, which we denote by $\phi_z$. Let us we fix this notation here once and for all. For $z = 0$ we set $\phi_0(w) = -w$. Moreover, for $\nu > -1$ we define the probability measure\footnote{Sometimes the weight is parametrized by $\lambda := \nu+g$ instead.} $v_{\nu}$ as
\[\mathrm{d}v_{\nu}(z) := c_{\nu}h(z,z)^{\nu} \, \mathrm{d}v(z),\]
where $v$ denotes the usual Lebesgue measure on $\C^n$ restricted to $\Omega$ and $c_{\nu}$ is a suitable constant such that $v_{\nu}(\Omega) = 1$. It is worth noting that
\begin{itemize}
	\item[$(vi)$] $\int_{\Omega} h(z,z)^{\nu} \, \mathrm{d}v(z) < \infty$ if and only if $\nu > -1$.
\end{itemize}
The Jordan triple determinant $h$ and the measure $v_{\nu}$ transform with respect to $\phi_z$ as follows:
\begin{itemize}
	\item[$(vii)$] $h(\phi_z(x),\phi_z(y)) = \frac{h(z,z)h(x,y)}{h(x,z)h(z,y)}$ for all $x,y,z \in \Omega$
	\item[$(viii)$] $\mathrm{d}v_{\nu}(\phi_z(w)) = \frac{h(z,z)^{\nu+g}}{\abs{h(w,z)}^{2(\nu+g)}} \, \mathrm{d}v_{\nu}(w)$ for all $w,z \in \Omega$ 
\end{itemize}

For $p \in (1,\infty)$ and $\nu > -1$ let $L^p_{\nu} := L^p(\Omega,v_{\nu})$ denote the usual Lebesgue space of $p$-integrable functions and $A^p_{\nu}$ the closed subspace of holomorphic functions in $L^p_{\nu}$. The orthogonal projection $P_{\nu}$ from $L^2_{\nu}$ onto $A^2_{\nu}$ is given by
\[(P_{\nu}f)(z) = \int_{\Omega} f(w)h(z,w)^{-\nu-g} \, \mathrm{d}v_{\nu}(w)\]
(see \cite[Section 3]{FaKo}). For general $p$ we can try to take the same formula to get a projection onto $A^p_{\nu}$. However, this integral operator, again denoted by $P_{\nu}$, is not bounded in general (see \cite[Theorem II.8]{BeTe}). Let us call $(\alpha,\nu,p) \in \R^2 \times (1,\infty)$ admissible if the following inequalities are satisfied:
\begin{equation} \label{admissible}
p(\alpha + 1) > \nu + 1 + \frac{(r-1)a}{2} > p\frac{(r-1)a}{2}.
\end{equation}
By \cite[Proposition 1]{Hagger}, this implies that $P_{\alpha}$ is a bounded projection from $L^p_{\nu}$ onto $A^p_{\nu}$. Throughout this paper we will always assume that $\nu$ is sufficiently large such that $(\nu,\nu,p)$ is admissible and $P_{\nu}$ is bounded as a consequence. Note that for $\alpha = \nu$ there exist more optimal conditions than \eqref{admissible} (e.g.~\cite[Lemma 9]{Englis}). Even for $\alpha \neq \nu$ one can actually improve these inequalities. However, we refrain from doing that because \eqref{admissible} also assures that we can use the Rudin-Forelli estimates (i.e.~\cite[Proposition 8]{Englis}) appropriately. This will get more clear later on.

For $\alpha = \nu$, we can reformulate the condition \eqref{admissible} as follows:
\begin{equation} \label{admissible2}
1 + \frac{(r-1)a}{2(\nu + 1)} < p < 1 + \frac{2(\nu + 1)}{(r-1)a}.
\end{equation}
Note that $1 + \frac{(r-1)a}{2(\nu + 1)}$ is exactly the dual exponent of $1 + \frac{2(\nu + 1)}{(r-1)a}$. This of course makes sense as $(L^p_{\nu})^* \cong L^q_{\nu}$ and $(A^p_{\nu})^* \cong A^q_{\nu}$ via the usual dual pairing. Therefore $P_{\nu}$ is bounded on $L^p_{\nu}$ if and only if $P_{\nu}^*$ is bounded on $L^q_{\nu}$. But $P_{\nu}^*$ is formally the same as $P_{\nu}$, so that $P_{\nu}$ is bounded on $L^p_{\nu}$ if and only if it is bounded on $L^q_{\nu}$. In that case we can consider Toeplitz operators $T_f := P_{\nu}M_f|_{A^p_{\nu}}$ for bounded functions $f \from \Omega \to \C$ and corresponding multiplication operators $M_f$. The function $f$ is called the symbol of $T_f$ and $M_f$. For every bounded symbol $f$, the corresponding Toeplitz operator is bounded with $\norm{T_f} \leq \norm{P_{\nu}}\norm{f}_{\infty}$. 

We conclude this section with a few notations. The set of all bounded linear operators on a Banach space $X$ will be denoted by $\Lc(X)$. The ideal of compact operators will be denoted by $\Kc(X)$. An operator $A \in \Lc(X)$ is called Fredholm if the coset $A + \Kc(X)$ is invertible in the Calkin algebra $\Lc(X) / \Kc(X)$. The essential spectrum of $A$ is given by $\set{\lambda \in \C : A - \lambda I \text{ is not Fredholm}}$ and denoted by $\spec_{\ess}(A)$. The usual spectrum is denoted by $\spec(A)$. The closed subalgebra of $\Lc(A^p_{\nu})$ generated by all Toeplitz operators with bounded symbol will be denoted by $\Tf_{p,\nu}$. The commutator of two operators $A,B \in \Lc(X)$ will be denoted by $[A,B] = AB-BA$.

\section{Band-dominated operators} \label{BDO}

In \cite{Hagger} band-dominated operators on $L^p_{\nu}$ were defined. We quickly recall the definition and some basic facts in this short section.

\begin{defn} \label{defn_BDO}
(\cite[Definition 6]{Hagger})\\
An operator $A \in \Lc(L^p_{\nu})$ is called a band operator if there exists a positive real number $\omega$ such that $M_fAM_g = 0$ for all $f,g \in L^{\infty}(\Omega)$ with $\dist_{\beta}(\supp f,\supp g) > \omega$. The number
\[\inf\set{\omega \in \R : M_fAM_g = 0 \text{ for all } f,g \in L^{\infty}(\Omega) \text{ with } \dist_{\beta}(\supp f,\supp g) > \omega}\]
is called the band width of $A$. An operator $A \in \Lc(L^p_{\nu})$ is called band-dominated if it is the norm limit of band operators. The set of band-dominated operators will be denoted by $\BDO^p_{\nu}$.
\end{defn}

\begin{prop} \label{prop0}
(\cite[Proposition 13]{Hagger})\\
$\BDO^p_{\nu}$ has the following properties:
\begin{itemize}
	\item[$(i)$] It holds $M_f \in \BDO^p_{\nu}$ for all $f \in L^{\infty}(\Omega)$.
	\item[$(ii)$] $\BDO^p_{\nu}$ is a closed subalgebra of $\Lc(L^p_{\nu})$.
	\item[$(iii)$] If $A \in \BDO^p_{\nu}$ is Fredholm, then every regularizer $B$ of $A$ is again in $\BDO^p_{\nu}$. In particular, $\BDO^p_{\nu}$ is inverse closed.
	\item[$(iv)$] $\BDO^p_{\nu}$ contains $\Kc(L^p_{\nu})$ as a closed two-sided ideal.
	\item[$(v)$] It holds $A \in \BDO^p_{\nu} \Longleftrightarrow A^* \in \BDO^q_{\nu}$ for $\frac{1}{p} = \frac{1}{q} = 1$. In particular, $\BDO^2_{\nu}$ is a $C^*$-algebra.
\end{itemize}
\end{prop}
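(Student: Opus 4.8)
The five items split into four that are essentially immediate from the definitions and one, $(iii)$, that carries all the difficulty; I would dispatch the easy ones first and then concentrate on $(iii)$. For $(i)$ I would simply observe that $M_f$ is a band operator of band width $0$: if $\dist_\beta(\supp g,\supp h)>0$ then $\supp g\cap\supp h=\emptyset$, so $M_gM_fM_h=M_{gfh}=0$. For $(ii)$ the only points to check are that band operators are closed under sums and products, after which passing to norm limits yields the closed algebra $\BDO^p_\nu$ (norm-closedness holds by definition). Sums are clear, the band width of $A+B$ being at most the maximum of the two. For products I would insert the partition of unity $I=\sum_j M_{\varphi_{j,t}}$ (strongly convergent, as $\sum_j\varphi_{j,t}=1$ with locally finite sum) and write $M_fABM_g=\sum_j M_fAM_{\varphi_{j,t}}BM_g$; a term survives only if $\supp\varphi_{j,t}$ lies within the band width of $A$ from $\supp f$ and within that of $B$ from $\supp g$, and since $\diam_\beta(\supp\varphi_{j,t})$ is bounded by $C(t)+\tfrac{2}{3t}$, the triangle inequality forces $\dist_\beta(\supp f,\supp g)$ to be bounded, so $AB$ has finite band width. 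For $(iv)$ it suffices, by density of finite-rank operators and closedness of $\BDO^p_\nu$, to approximate a rank-one operator $u\otimes v$ by band operators: truncating $u,v$ to a $\beta$-ball of radius $R$ gives $u_R\otimes v_R$, a band operator of width $\le 2R$, because $M_f(u_R\otimes v_R)M_g=0$ unless $\supp f$ meets $\supp u_R$ and $\supp g$ meets $\supp v_R$, forcing $\dist_\beta(\supp f,\supp g)\le 2R$; letting $R\to\infty$ gives $u\otimes v\in\BDO^p_\nu$, and that $\Kc(L^p_\nu)$ is a closed two-sided ideal is then inherited from $\Lc(L^p_\nu)$. For $(v)$, $(M_fAM_g)^*=M_{\bar g}A^*M_{\bar f}$ and $\dist_\beta$ is symmetric, so adjunction preserves band width; being isometric it maps norm limits to norm limits, giving $A\in\BDO^p_\nu\iff A^*\in\BDO^q_\nu$, and for $p=2$ this makes $\BDO^2_\nu$ a closed $*$-subalgebra of $\Lc(L^2_\nu)$, i.e.\ a $C^*$-algebra.

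The heart is $(iii)$, and my plan is to prove a commutator characterization:
\[
A\in\BDO^p_\nu\quad\Longleftrightarrow\quad \lim_{t\to0}\ \sup\set{\norm{[M_\varphi,A]}:0\le\varphi\le1,\ \abs{\varphi(z)-\varphi(w)}\le t\,\beta(z,w)}=0.
\]
The forward direction is easy: for a band operator of width $\omega$ a $t$-Lipschitz $\varphi$ varies by at most $t\omega$ across the band, so $\norm{[M_\varphi,A]}\lesssim t\omega\norm{A}$, and this survives norm limits. The converse is the obstacle. Given the commutator bound I would set $A_t:=\sum_j M_{\psi_{j,t}}AM_{\varphi_{j,t}}$, a band operator, and use $I=\sum_j M_{\varphi_{j,t}}$ together with $M_{1-\psi_{j,t}}=I-M_{\psi_{j,t}}$ to obtain the identity $A-A_t=\sum_j M_{1-\psi_{j,t}}AM_{\varphi_{j,t}}$. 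Here $1-\psi_{j,t}$ vanishes on $\Xi_{j,t,2}$ while $\varphi_{j,t}$ is supported on $\Xi_{j,t,1}$, so each summand is an off-diagonal entry with a gap of at least $\tfrac1{3t}$; inserting a $t$-Lipschitz cut-off $\eta_j$ that is $1$ on $\supp\varphi_{j,t}$ and $0$ on $\supp(1-\psi_{j,t})$ gives $M_{1-\psi_{j,t}}AM_{\varphi_{j,t}}=-M_{1-\psi_{j,t}}[M_{\eta_j},A]M_{\varphi_{j,t}}$, whose norm is bounded by the commutator supremum at scale $\sim t$.

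The remaining, and principal, difficulty is to sum these estimates. The left factors $1-\psi_{j,t}$ do \emph{not} have bounded overlap, so the naive single-index bound fails; I would therefore decompose the range side as $1-\psi_{j,t}=\sum_k\varphi_{k,t}(1-\psi_{j,t})$, reducing to a doubly-indexed off-diagonal sum over pairs with $\dist_\beta(B_{j,t},B_{k,t})\gtrsim 1/t$, for which both localizing families now have overlap $\le N$. Its norm I would control by a bounded-overlap Schur estimate: already for single-index sums one has $\norm{\sum_j M_{\psi_{j,t}}S_jM_{\varphi_{j,t}}}\le N^{1/q}\sup_j\norm{S_j}$ by Hölder, using the overlap bound together with $\sum_j\varphi_{j,t}^p\le\sum_j\varphi_{j,t}=1$. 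It is precisely here that the uniform local-finiteness constant $N$, i.e.\ the finite asymptotic dimension of $(\Omega,\beta)$, is indispensable, since the exponential volume growth of the Bergman metric rules out naive counting; organizing the off-diagonal pairs into finitely many colour classes of pairwise disjoint supports is the technical crux where I expect to spend the most effort.

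Finally I would deduce inverse-closedness and the regularizer statement from the characterization. If $A\in\BDO^p_\nu$ is invertible in $\Lc(L^p_\nu)$, then $[M_\varphi,A^{-1}]=-A^{-1}[M_\varphi,A]A^{-1}$ yields $\norm{[M_\varphi,A^{-1}]}\le\norm{A^{-1}}^2\norm{[M_\varphi,A]}\to0$, so $A^{-1}\in\BDO^p_\nu$. For a Fredholm $A$ with regularizer $B$ I would run the same computation in the Calkin algebra: writing $AB=I+K_1$ and $BA=I+K_2$ with $K_1,K_2$ compact, and using $(iv)$ so that the compact terms present no obstruction, the coset of $B$ satisfies the essential commutator condition, whence the Calkin-algebra version of the converse construction places $B$ in $\BDO^p_\nu+\Kc(L^p_\nu)=\BDO^p_\nu$. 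Since any two regularizers differ by a compact operator, every regularizer of $A$ lies in $\BDO^p_\nu$, and in particular $\BDO^p_\nu$ is inverse closed.
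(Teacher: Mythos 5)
Your treatment of $(i)$, $(ii)$, $(iv)$ and $(v)$ is correct and essentially the standard one: $M_f$ has band width $0$, band widths behave subadditively under sums and products, truncated rank-one operators $u_R\otimes v_R$ are band operators approximating $u\otimes v$, and $\dist_\beta(\supp f,\supp g)$ is unchanged under passing to adjoints. (Two caveats: the paper itself gives no proof of this proposition --- it is quoted from \cite[Proposition 13]{Hagger} --- and your ``easy'' forward direction of the commutator characterization, namely that a $t$-Lipschitz $\varphi$ ``varies by at most $t\omega$ across the band'' so that $\norm{[M_\varphi,A]}\lesssim t\omega\norm{A}$, is a heuristic rather than a proof: pointwise domination of kernels does not bound operator norms. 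The correct statement is exactly Lemma \ref{lem3}, i.e.\ \cite[Lemma 12]{Hagger}, whose proof needs the bounded-overlap machinery; you may of course quote it.)

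The genuine gap is in $(iii)$, precisely at the point you flag as the crux, and it is not repairable by the device you propose. Your plan requires the converse implication: small commutators with all slowly oscillating functions force $A\in\BDO^p_\nu$. After the (correct) reduction
\[
A-\sum_j M_{\psi_{j,t}}AM_{\varphi_{j,t}}=-\sum_j M_{1-\psi_{j,t}}[M_{\eta_j},A]M_{\varphi_{j,t}},
\]
everything hinges on bounding the operator norm of the right-hand side by $C\sup_j\norm{[M_{\eta_j},A]}$ with $C$ independent of $t$. Your H\"older/overlap estimate is indeed unavailable because the range-side factors $1-\psi_{j,t}$ overlap infinitely; but the proposed repair fails as well: after writing $1-\psi_{j,t}=\sum_k\varphi_{k,t}(1-\psi_{j,t})$, each fixed $j$ pairs with \emph{infinitely many} $k$ (every $k$ with $B_{k,t}$ far from $B_{j,t}$), so the admissible pairs form a bipartite graph of infinite degree, and no partition into \emph{finitely} many classes in which both the $j$'s and the $k$'s are pairwise distinct can exist --- hence no ``finitely many colour classes of pairwise disjoint supports''. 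A Schur-type estimate cannot substitute for it either, since quasi-locality only gives the qualitative decay $\norm{M_{\varphi_{k,t}}AM_{\varphi_{j,t}}}\to 0$ as $\dist_\beta(B_{j,t},B_{k,t})\to\infty$, with no summable rate, so row and column sums are uncontrolled. This is not a local technicality: the implication ``quasi-local $\Rightarrow$ band-dominated'' is a deep theorem, established for spaces with property A (in particular finite asymptotic dimension, the case at hand) only later and by substantially heavier methods (\v{S}pakula--Tikuisis, \v{S}pakula--Zhang); the cited proof of $(iii)$ in \cite{Hagger} does not route through such a characterization but exploits the Fredholm hypothesis (compact perturbations) directly. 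Finally, even granting your characterization, the Calkin-algebra step has a secondary gap: running the construction ``modulo compacts'' produces error terms $\sum_j M_{1-\psi_{j,t}}K_jM_{\varphi_{j,t}}$ with $K_j$ compact, and such strongly convergent series of compact operators need not be compact, so $B\in\BDO^p_\nu+\Kc(L^p_\nu)$ does not follow as stated. That second gap is patchable --- prove inverse closedness first (where $[M_\varphi,A^{-1}]=-A^{-1}[M_\varphi,A]A^{-1}$ needs no quotient), reduce to the invertible case by a finite-rank perturbation when $\operatorname{ind}A=0$, and use that regularizers differ by compacts together with $(iv)$ --- but the nonzero-index case then still needs an additional idea, e.g.\ a matrix-algebra version of inverse closedness.
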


(Extensions of) Toeplitz operators are particular examples of band-dominated operators as the following proposition shows. This fact is one of our main motivations to study band-dominated operators.

\begin{prop} \label{Projections_band_dominated}
Let $(\alpha,\nu,p)$ be an admissible triple. Then $P_{\alpha} \in \BDO^p_{\nu}$. In particular, $AP_{\alpha} \in \BDO^p_{\nu}$ for $A \in \Tf_{p,\nu}$.
\end{prop}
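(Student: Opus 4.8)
The plan is to realise $P_\alpha$ as the norm limit of the band operators obtained by truncating its integral kernel along the Bergman distance, and then to control the truncation error by a Schur test built on the Rudin--Forelli estimates. First I would record that, as an operator on $L^p_\nu = L^p(\Omega,v_\nu)$, the projection $P_\alpha$ is the integral operator
\[(P_\alpha f)(z) = \int_{\Omega} f(w)\,h(z,w)^{-\alpha-g}\,\mathrm{d}v_\alpha(w) = \int_{\Omega} f(w)\,\Kc(z,w)\,\mathrm{d}v_\nu(w),\]
where $\Kc(z,w) = \frac{c_\alpha}{c_\nu}h(z,w)^{-\alpha-g}h(w,w)^{\alpha-\nu}$. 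For $R > 0$ I define $P_\alpha^{(R)}$ via the truncated kernel $\Kc(z,w)\mathbf 1_{\set{\beta(z,w) \leq R}}$. Since this kernel vanishes whenever $\beta(z,w) > R$, we get $M_f P_\alpha^{(R)} M_g = 0$ as soon as $\dist_\beta(\supp f,\supp g) > R$, so each $P_\alpha^{(R)}$ is a band operator of band width at most $R$. It therefore suffices to show that the tail operator $E_R := P_\alpha - P_\alpha^{(R)}$, with kernel $\Kc(z,w)\mathbf 1_{\set{\beta(z,w) > R}}$, satisfies $\norm{E_R} \to 0$ as $R \to \infty$; then $P_\alpha = \lim_R P_\alpha^{(R)} \in \BDO^p_\nu$.

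To estimate $\norm{E_R}$ I would run Schur's test with the same weight $\varphi(z) = h(z,z)^{-t}$ used to prove boundedness of $P_\alpha$ (admissibility of $(\alpha,\nu,p)$ being exactly the nonemptiness of the relevant range of $t$). The key device is the $\Aut(\Omega)_0$-covariance: the Bergman distance is $\phi_z$-invariant, and substituting $w = \phi_z(u)$ together with the transformation rules $(vii)$, $(viii)$ and the normalisation $h(0,\cdot) = h(\cdot,0) = 1$ from $(iii)$--$(iv)$ turns the row integral into
\[\frac{1}{\varphi(z)^q}\int_{\set{\beta(z,w) > R}} \abs{\Kc(z,w)}\,\varphi(w)^q\,\mathrm{d}v_\nu(w) = c_\alpha\int_{\set{\beta(0,u) > R}} \abs{h(z,u)}^{c}\,h(u,u)^{d}\,\mathrm{d}v(u),\]
with $c = -\alpha-g+2tq$ and $d = \alpha-tq$ obtained by bookkeeping the powers of $h(z,z)$, $\abs{h(z,u)}$ and $h(u,u)$. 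The column integral produces an expression of the same shape with exponents $c',d'$ involving $p$ in place of $q$.

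The main obstacle is to show these integrals tend to $0$ as $R \to \infty$ \emph{uniformly} in $z$ (resp.\ $w$). In the favourable regime $c \geq 0$ this is clean: $\abs{h(z,u)}$ is bounded on $\overline{\Omega}\times\overline{\Omega}$, so the integral is dominated by $\sup\abs{h}^{c}\int_{\set{\beta(0,u) > R}} h(u,u)^{d}\,\mathrm{d}v(u)$, which tends to $0$ by dominated convergence whenever $d > -1$ (property $(vi)$), because $\set{\beta(0,u) > R}$ shrinks to $\partial\Omega$. I would thus choose $t$ inside the admissible Schur window so that $c$ and $c'$ are nonnegative while $d,d' > -1$; the inequalities \eqref{admissible} are precisely what make such a $t$ available. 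In the remaining regime (a negative exponent) one cannot bound $\abs{h(z,u)}$ from above, and the uniformity in $z$ must instead be extracted from the Rudin--Forelli estimates \cite[Proposition 8]{Englis}, which describe the exact boundary behaviour of the full untruncated integral and thereby furnish uniform integrability (tightness) of the family $\set{\abs{h(z,\cdot)}^{c}h(\cdot,\cdot)^{d}\,\mathrm{d}v}_z$. This uniform tightness is the technical heart of the argument; granting it, $\norm{E_R} \leq C_1(R)^{1/q}C_2(R)^{1/p} \to 0$ and hence $P_\alpha \in \BDO^p_\nu$.

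For the final assertion, recall that $M_f \in \BDO^p_\nu$ for all $f \in L^\infty(\Omega)$ and that $\BDO^p_\nu$ is a closed algebra (Proposition \ref{prop0}), while $P_\nu \in \BDO^p_\nu$ is the case $\alpha = \nu$ just proved. For a finite product of Toeplitz operators one has, on $L^p_\nu$,
\[T_{f_1}\cdots T_{f_k}P_\alpha = P_\nu M_{f_1}\cdots P_\nu M_{f_k}P_\alpha,\]
since $P_\alpha$ maps $L^p_\nu$ into $A^p_\nu$, where the intermediate $P_\nu$'s act and the restriction in $T_{f_j} = P_\nu M_{f_j}|_{A^p_\nu}$ is immaterial. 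The right-hand side is a product of elements of $\BDO^p_\nu$, hence lies in $\BDO^p_\nu$, as do finite sums of such products. A general $A \in \Tf_{p,\nu}$ is a norm limit $A = \lim_n A_n$ of such sums, and $\norm{A_n P_\alpha - A P_\alpha} \leq \norm{A_n - A}\,\norm{P_\alpha} \to 0$, so $AP_\alpha \in \BDO^p_\nu$ because $\BDO^p_\nu$ is closed.
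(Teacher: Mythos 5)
Your overall architecture --- truncate the kernel of $P_\alpha$ at Bergman distance $R$ to get band operators of band width at most $R$, kill the tail $E_R$ by a Schur test with weight $h(\cdot,\cdot)^{-t}$, then deduce $AP_\alpha \in \BDO^p_\nu$ for $A \in \Tf_{p,\nu}$ from Proposition \ref{prop0} and closedness of the algebra --- is exactly the strategy of the proof the paper invokes, namely that of \cite[Theorem 7]{Hagger}, and your exponent bookkeeping after the substitution $w = \phi_z(u)$ is correct: the powers of $h(z,z)$ do cancel, and the row quantity becomes $c_\alpha\int_{\set{\beta(0,u)>R}}\abs{h(z,u)}^{c}h(u,u)^{d}\,\mathrm{d}v(u)$ with $c = 2tq-\alpha-g$, $d = \alpha - tq$. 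The gap sits in the one step that carries the analytic weight. Your ``favourable regime'' needs $c \geq 0$ and $d > -1$, i.e.\ $\frac{\alpha+g}{2} \leq tq < \alpha+1$, which forces $\alpha > g-2$; the inequalities \eqref{admissible} do \emph{not} imply this. Already for $\Omega = \B^n$ (so $g = n+1$), $p = 2$ and $\alpha = \nu = 0$ --- the unweighted Bergman space, an admissible triple --- no such $t$ exists, for any $n \geq 1$. So in essentially every case of interest, in particular the case $\alpha = \nu$ needed for $P_\nu$ itself, you are in the ``remaining regime'', which you explicitly do not prove but only ``grant''.

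Moreover, the justification you sketch for that regime is a non sequitur: the Rudin--Forelli estimates \cite[Proposition 8]{Englis} give $\sup_z \int_\Omega \abs{h(z,u)}^{c}h(u,u)^{d}\,\mathrm{d}v(u) < \infty$, but uniform boundedness of a family of measures does not imply uniform tightness --- a priori the mass could escape to $\partial\Omega$ along with $z$, and uniform smallness of the tails is precisely the statement to be proved. What actually closes the argument is an $\epsilon$-shaving trick: on $\set{u \in \Omega : \beta(0,u) > R}$ one has $h(u,u) \leq \delta(R)$, where $\delta(R) := \sup\set{h(u,u) : \beta(0,u) > R} \to 0$ as $R \to \infty$ (closed Bergman balls are compact and $h(u,u) \to 0$ at $\partial\Omega$); hence $h(u,u)^{d} \leq \delta(R)^{\epsilon}\,h(u,u)^{d-\epsilon}$, and one applies Rudin--Forelli to the shifted exponents $(c,d-\epsilon)$ in its \emph{uniformly bounded} regime, yielding a row bound $C\delta(R)^{\epsilon}$ uniformly in $z$, and similarly for the column. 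The requirements are $d-\epsilon > -1$ and $tq - \epsilon > \frac{(r-1)a}{2}$ for the row, $d'-\epsilon > -1$ and $\alpha - \nu + tp - \epsilon > \frac{(r-1)a}{2}$ for the column; a short computation using $p+q = pq$ shows that a common $t$ and some $\epsilon > 0$ satisfying all four conditions exist precisely when \eqref{admissible} holds --- this is the actual role of admissibility, alluded to in Section \ref{BSDs}. Supplying this verification is the missing heart of the proof; with it, your truncation argument becomes the cited proof of \cite[Theorem 7]{Hagger}, and your closed-algebra deduction of the statement for $A \in \Tf_{p,\nu}$ is fine as written.
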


\begin{proof}
Follows directly from (the proof of) \cite[Theorem 7]{Hagger}.
\end{proof}

The following estimate will be crucial for subsequent results. It is quite remarkable that it holds simultaneously for all band operators of a fixed band width $\omega$.

\begin{lem} \label{lem3}
(\cite[Lemma 12]{Hagger})\\
Let $\omega > 0$ and let $a_{j,t} \from \Omega \to [0,1]$ be measurable functions for $j \in \N$ and $t \in (0,1)$. If 
\[\lim\limits_{t \to 0} \inf\limits_{j \in \N} \dist_{\beta}(a_{j,t}^{-1}(U),a_{j,t}^{-1}(V)) \to \infty\]
for all sets $U,V \subset [0,1]$ with $\dist(U,V) > 0$, then for every $\epsilon > 0$ there exists a $t_0 > 0$ such that for all $t < t_0$ and every band operator of band width at most $\omega$ the estimate
\[\sup\limits_{j \in \N} \norm{[A,M_{a_{j,t}}]} \leq 3\norm{A}\epsilon\]
holds.
\end{lem}

\section{Limit Operators} \label{limit_operators}

Let $\lambda \in \R$. Since $h(w,z) \neq 0$ for all $w,z \in \Omega$ and $\Omega$ is simply connected (\cite[Theorem VIII.7.1]{Helgason}), we can choose a branch of $h(w,z)^{\lambda}$ such that $h(w,z)^{\lambda}$ is holomorphic in $w$ and $\bar{z}$ and $h(w,w)^{\lambda} > 0$ for all $w \in \Omega$. For $z \in \Omega$ we now define $U_z^p \in \Lc(L^p_{\nu})$ by
\[(U_z^pf)(w) = f(\phi_z(w))\frac{h(z,z)^{\frac{\nu+g}{p}}}{h(w,z)^{\frac{2(\nu+g)}{p}}}.\]
A standard computation shows that $U_z^p$ is a surjective isometry with $(U_z^p)^2 = I$. Moreover, $U_z^p$ maps holomorphic functions to holomorphic functions.

\begin{lem} \label{lem2}
The map $z \mapsto \phi_z(w)$ is continuous for all $w \in \Omega$.
\end{lem}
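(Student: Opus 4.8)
The plan is to deduce the continuity of $z \mapsto \phi_z(w)$ from the continuity of the $\Aut(\Omega)$-action on $\Omega$ together with the continuous dependence of the geodesic symmetries on their centre. Recall that $G := \Aut(\Omega)_0$ is a (finite-dimensional, real) Lie group and that the action map $\alpha \from G \times \Omega \to \Omega$, $(g,v) \mapsto g(v)$, is real-analytic, hence in particular jointly continuous (\cite{Helgason}). Writing $\Phi \from \Omega \to G$, $z \mapsto \phi_z$, we have $\phi_z(w) = \alpha(\Phi(z),w)$, so it suffices to prove that $\Phi$ is continuous; the assertion then follows by composition with $\alpha$.

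First I would make the centre of the reflection explicit. By definition $\phi_z = s_{m(z)}$, where $s_m$ denotes the geodesic symmetry of the Riemannian symmetric space $(\Omega,\beta)$ at $m$ and $m(z)$ is the midpoint of the $\beta$-geodesic joining $0$ and $z$. The map $z \mapsto m(z)$ is smooth, since $m(z) = \exp_0\bigl(\tfrac12 \exp_0^{-1}(z)\bigr)$ and $\exp_0$ is a real-analytic diffeomorphism onto $\Omega$. It then remains to check that $m \mapsto s_m$ is continuous from $\Omega$ into $G$. For this I would use that $s_m = g\, s_0\, g^{-1}$ for any $g \in G$ with $g(0) = m$, where $s_0 = \phi_0$ is the fixed symmetry $w \mapsto -w$; this is well defined because $s_0$ commutes with the stabiliser $K$ of the origin (if $g_2 = g_1 k$ with $k \in K$, then $g_2 s_0 g_2^{-1} = g_1\, k s_0 k^{-1}\, g_1^{-1} = g_1 s_0 g_1^{-1}$, using $k s_0 k^{-1} = s_{k(0)} = s_0$). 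Choosing a local smooth section of the principal bundle $G \to G/K \cong \Omega$ and invoking joint continuity of multiplication and inversion in $G$, one obtains continuity (indeed smoothness) of $m \mapsto s_m$. Composing the two continuous maps gives continuity of $\Phi$, which finishes the proof.

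I expect the only genuine obstacle to be the continuous dependence $m \mapsto s_m$, i.e.\ that $z \mapsto \phi_z$ is a \emph{continuous} section of $G \to \Omega$ rather than a mere set-theoretic choice of coset representatives; everything else is formal once the symmetric-space structure of $(\Omega,\beta)$ is used. Two alternatives bypass this point. One may quote an explicit Jordan-triple (quasi-inverse) formula for $\phi_z$ valid on all bounded symmetric domains (e.g.\ \cite{Upmeier}), which exhibits $(z,w) \mapsto \phi_z(w)$ as real-analytic and yields the claim at once. Alternatively, one can argue by normal families: for $z_n \to z_0$ the maps $\phi_{z_n}$ are uniformly bounded, so any locally uniformly convergent subsequence has a limit $\psi$ that is an automorphism (by H.~Cartan's theorem, since $\psi(0) = z_0 \in \Omega$); passing the involution property together with the identities $\phi_{z_n}(m(z_n)) = m(z_n)$ and $d(\phi_{z_n})_{m(z_n)} = -\operatorname{id}$ to the limit — using $m(z_n) \to m(z_0)$ and convergence of the derivatives of holomorphic maps — identifies $\psi$ with $s_{m(z_0)} = \phi_{z_0}$, because an isometry of $(\Omega,\beta)$ is determined by its value and differential at a single point. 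The delicate point of this second route is precisely the convergence of the centres $m(z_n)$ and of the differentials, which again rests on the same symmetric-space facts.
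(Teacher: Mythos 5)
Your proposal is correct, but it takes a different route from the paper for the key step. Both arguments begin the same way: by Cartan--Hadamard (using that $(\Omega,\beta)$ is simply connected with non-positive curvature), $\exp_0$ is a diffeomorphism, so the midpoint $z^* = m(z) = \exp_0\bigl(\tfrac12 \exp_0^{-1}(z)\bigr)$ depends continuously on $z$. Where you then pass to the Lie group $G$ --- proving that $m \mapsto s_m$ is continuous as a map into $G$ via the conjugation formula $s_m = g s_0 g^{-1}$, well-definedness over the stabiliser $K$, and a local smooth section of the principal bundle $G \to G/K \cong \Omega$ --- the paper stays entirely inside Riemannian geometry and dissolves the very obstacle you single out with one formula: for fixed $w$, the geodesic reflection at $z^*$ satisfies $\phi_z(w) = \exp_w\bigl(2\exp_w^{-1}(z^*)\bigr)$, since reflecting $w$ through $z^*$ amounts to doubling the geodesic segment from $w$ to $z^*$, and $\exp_w$, $\exp_w^{-1}$ are homeomorphisms by Cartan--Hadamard again. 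This makes continuity in $z$ immediate, with no reference to the group topology on $\Aut(\Omega)_0$, the analyticity of the action, or bundle sections. Your approach needs this heavier (though entirely standard) machinery, but it also proves more: continuity, indeed smoothness, of $z \mapsto \phi_z$ as a $G$-valued map, hence joint continuity of $(z,w) \mapsto \phi_z(w)$, whereas the paper's argument only gives continuity pointwise in $w$ --- which is all that Proposition \ref{prop2} requires. Your two sketched alternatives (the Jordan-triple quasi-inverse formula, and the normal-families argument) are also viable and again different from the paper's proof.
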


\begin{proof}
Considered as a Riemannian manifold, $\Omega$ is simply connected and has non-positive sectional curvature (see \cite[Theorem V.3.1, Theorem VIII.4.6, Theorem VIII.7.1]{Helgason}). Thus, by Cartan-Hadamard, the exponential map $\exp_w \from T_w\Omega \to \Omega$ is a homeomorphism for every point $w \in \Omega$. Let $z^*$ denote the midpoint of the geodesic connecting $0$ and $z \in \Omega$ (i.e.~the point of reflection of the symmetry $\phi_z$) and let $w \in \Omega$. Then by definition of the exponential map, we have $\phi_z(w) = \exp_w(2\exp_w^{-1}(z^*))$. Similarly, $z^* = \exp_0(\frac{1}{2}\exp_0^{-1}(z))$. It follows that $z \mapsto \phi_z(w)$ is indeed continuous.
\end{proof}

\begin{prop} \label{prop2}
Let $\beta\Omega$ denote the Stone-\v{C}ech compactification of $\Omega$ and $A \in \Lc(L^p_{\nu})$. Consider the map $\Psi_A \from \Omega \to \Lc(L^p_{\nu})$ defined by $\Psi_A(z) := U_z^pAU_z^p$. Then $\Psi_A$ has a weakly continuous extension $\tilde{\Psi}_A \from \beta\Omega \to \Lc(L^p_{\nu})$, i.e.~$\tilde{\Psi}_A$ is weakly continuous and $\tilde{\Psi}_A|_{\Omega} = \Psi_A$.
\end{prop}

\begin{proof}
Let $f \in L^p_{\nu}$ be continuous. Then the map $z \mapsto (U_z^pf)(w)$ is continuous for every $w \in \Omega$ by Lemma \ref{lem2}. As $U_z^p$ is an isometry for every $z \in \Omega$, Scheff\'{e}'s Lemma implies that $z \mapsto U_z^pf$ is continuous in $L^p_{\nu}$. Therefore, as continuous functions are dense in $L^p_{\nu}$, $z \mapsto U_z^p$ is strongly continuous by Banach-Steinhaus. This also implies that $\Psi_A$ is strongly continuous, hence weakly continuous. Moreover, we have $\norm{\Psi_A(z)} = \norm{A}$ for every $z \in \Omega$. As bounded sets are relatively compact in the weak operator topology, $\Psi_A$ admits a weakly continuous extension to $\beta\Omega$.
\end{proof}

We will use the notation
\begin{equation} \label{weak_limit_operators}
A_x := \tilde{\Psi}_A(x) = \wlim\limits_{z \to x} U_z^pAU_z^p,
\end{equation}
where the limit is understood in the sense of nets, and call $A_x$ a limit operator of $A$ if $x \in \beta\Omega \setminus \Omega$.

\begin{rem} \label{rem_limit_operator_definition}
Note that in previous work (\cite{Hagger,MiSuWi,Suarez}) limit operators were defined slightly differently, namely by $A_x = \wlim\limits_{z \to x} U_z^pA(U_z^q|_{A^q_{\nu}})^*$ for $A \in \Lc(A^p_{\nu})$, where $q$ denotes the dual exponent of $p$. There are two small differences to our definition here. First of all, we start by defining limit operators on $L^p_{\nu}$ and then later restrict them to $A^p_{\nu}$ (see Definition \ref{BDO_on_A^p}). As Proposition \ref{prop6} shows, this approach results to the same expression as \eqref{weak_limit_operators}, but with the additional benefit that we can use tools for band-dominated operators in this approach. More importantly, we choose to not use the adjoint in the definition. Fortunately, the two definitions only differ by an invertible operator independent of $A$ (i.e.~the limit of $(U_z^q|_{A^q_{\nu}})^*U_z^p$, see Proposition \ref{prop_T_b_x} below) and they are even the same for $p = 2$. The reason we choose \eqref{weak_limit_operators} as our definition of limit operators is that in this way the behavior under multiplication is somewhat better ($U_z^pU_z^p = I \neq (U_z^q|_{A^ q_{\nu}})^*U_z^p$ for $p \neq 2$). On the other hand, we lose the property $(A_x)^* = (A^*)_x$ and we will see that \eqref{weak_limit_operators} behaves slightly worse under the Berezin transform. As we value multiplication just a little bit more in this paper, we chose to use \eqref{weak_limit_operators} instead of the previous choice. We do not claim that doing it in this way is better in general, though. It is rather a matter of taste and we will come back to this at the end of this section (Proposition \ref{prop_T_b_x}).
\end{rem}

The following properties of limit operators are still intact and follow directly from the properties of $U_z^p$ and/or the weak operator convergence.

\begin{prop} \label{limit_operator_properties}
Let $A,B \in \Lc(L^p_{\nu})$, $\lambda \in \C$ and $x \in \beta\Omega$. Then $(A+B)_x = A_x+B_x$, $(\lambda A)_x = \lambda A_x$ and $\norm{A_x} \leq \norm{A}$. Moreover, if $(A_n)_{n \in \N}$ is a sequence in $\Lc(L^p_{\nu})$ that converges to $A$ in norm, then $(A_n)_x$ converges to $A_x$ in norm.
\end{prop}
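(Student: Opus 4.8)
The plan is to verify each assertion directly from the definition of the limit operator as a weak-operator limit of the conjugated net $U_z^p A U_z^p$, exploiting that $\tilde{\Psi}_A$ is precisely the weakly continuous extension furnished by Proposition \ref{prop2}. First I would recall that, for any fixed $x \in \beta\Omega$, the operator $A_x = \tilde{\Psi}_A(x)$ is the weak limit $\wlim_{z \to x} U_z^p A U_z^p$ along the net of points $z \in \Omega$ converging to $x$. The additivity and homogeneity statements are then immediate from the bilinearity of the conjugation map $A \mapsto U_z^p A U_z^p$ together with the linearity of the weak limit: for fixed $z$ we have $U_z^p(A+B)U_z^p = U_z^p A U_z^p + U_z^p B U_z^p$ and $U_z^p(\lambda A)U_z^p = \lambda\, U_z^p A U_z^p$, so testing against any pair $f \in L^p_\nu$, $\ell \in (L^p_\nu)^*$ and passing to the limit gives $(A+B)_x = A_x + B_x$ and $(\lambda A)_x = \lambda A_x$.

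For the norm bound, I would use that $U_z^p$ is a surjective isometry with $(U_z^p)^2 = I$, so $\norm{U_z^p A U_z^p} = \norm{A}$ for every $z$; hence $\norm{\Psi_A(z)} = \norm{A}$ on $\Omega$, as already observed in the proof of Proposition \ref{prop2}. The norm is weakly lower semicontinuous, so passing to the weak limit can only decrease it, yielding $\norm{A_x} \leq \norm{A}$ for all $x \in \beta\Omega$. Concretely, for any $f$ with $\norm{f} \leq 1$ and any $\ell$ with $\norm{\ell} \leq 1$ one has $\abs{\ell(A_x f)} = \lim_z \abs{\ell(U_z^p A U_z^p f)} \leq \norm{A}$, and taking suprema gives the claim.

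The last assertion, continuity in $A$, is the one requiring a little more care, and I expect it to be the main (though still mild) obstacle. The subtlety is that $A \mapsto A_x$ is defined via a weak limit, so it is not obvious a priori that norm convergence $A_n \to A$ transfers to norm convergence of the limit operators. The clean way around this is to combine the two facts already in hand: by linearity, $(A_n)_x - A_x = (A_n - A)_x$, and by the norm bound just proved, $\norm{(A_n)_x - A_x} = \norm{(A_n - A)_x} \leq \norm{A_n - A}$. Since $\norm{A_n - A} \to 0$ by hypothesis, this forces $\norm{(A_n)_x - A_x} \to 0$, which is exactly norm convergence. Thus the entire proposition reduces, after the additivity and the norm estimate are in place, to a one-line contraction estimate; no net-interchange argument or uniformity over $z$ is actually needed, which is what makes the statement routine once the earlier structural facts about $U_z^p$ and Proposition \ref{prop2} are invoked.
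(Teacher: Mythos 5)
Your proposal is correct and follows exactly the route the paper intends: the paper states this proposition without proof, noting only that it ``follows directly from the properties of $U_z^p$ and/or the weak operator convergence,'' and your argument fills in precisely those details (linearity of weak limits, isometric conjugation plus weak lower semicontinuity of the norm, and the reduction of the last claim to $\norm{(A_n)_x - A_x} = \norm{(A_n - A)_x} \leq \norm{A_n - A}$).
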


Moreover, limit operators of band-dominated operators are again band-dominated. To show this we need the following auxiliary result that is obtained by a direct computation (see e.g.~\cite[proof of Proposition 19]{Hagger}):

\begin{prop} \label{multiplication_shift}
Let $\xi \in L^{\infty}(\Omega)$. Then $U_z^pM_{\xi}U_z^p = M_{\xi \circ \phi_z}$ for all $z \in \Omega$. 
\end{prop}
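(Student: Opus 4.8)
The plan is to unwind both applications of $U_z^p$ directly from its defining formula and to check that the two accompanying cocycle factors cancel. Writing $J_z(w) := h(z,z)^{\frac{\nu+g}{p}}h(w,z)^{-\frac{2(\nu+g)}{p}}$, so that $(U_z^pf)(w) = f(\phi_z(w))J_z(w)$, I would compute for $f \in L^p_{\nu}$ and $w \in \Omega$
\[(U_z^pM_{\xi}U_z^pf)(w) = (M_{\xi}U_z^pf)(\phi_z(w))J_z(w) = \xi(\phi_z(w))(U_z^pf)(\phi_z(w))J_z(w).\]
Expanding $(U_z^pf)(\phi_z(w)) = f(\phi_z(\phi_z(w)))J_z(\phi_z(w))$ and using that $\phi_z$ is an involution (so $\phi_z(\phi_z(w)) = w$), the right-hand side becomes $\xi(\phi_z(w))f(w)J_z(\phi_z(w))J_z(w)$. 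Thus the whole statement reduces to the identity $J_z(\phi_z(w))J_z(w) = 1$.

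To establish the corresponding identity at the level of the base function, namely $h(w,z)\,h(\phi_z(w),z) = h(z,z)$, I would invoke the transformation rule $(vii)$ with arguments $x = w$, $y = 0$, together with $\phi_z(0) = z$ (since $\phi_z$ interchanges $0$ and $z$):
\[h(\phi_z(w),z) = h(\phi_z(w),\phi_z(0)) = \frac{h(z,z)h(w,0)}{h(w,z)h(z,0)} = \frac{h(z,z)}{h(w,z)},\]
where the final step uses $h(w,0) = h(z,0) = 1$ from property $(iii)$. Multiplying by $h(w,z)$ gives the desired cocycle relation.

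The only genuinely delicate point, and the one I expect to be the main obstacle, is that $U_z^p$ is built from the chosen holomorphic branch of $h(\cdot,z)^{\lambda}$ rather than from the polynomial $h$ itself, so the passage from the base relation to $J_z(\phi_z(w))J_z(w) = 1$ is not purely formal. I would dispatch this by a connectedness argument: the function $w \mapsto J_z(\phi_z(w))J_z(w)$ is holomorphic on $\Omega$, being a product of the branch $J_z$ with its composition by the biholomorphism $\phi_z$; its modulus is identically $1$, which follows by taking absolute values in $h(w,z)\,h(\phi_z(w),z) = h(z,z)$ and using $h(z,z) > 0$ (property $(i)$ guarantees nonvanishing); and at $w = z$ it equals $1$, since $\phi_z(z) = 0$ and $h(0,z) = \overline{h(z,0)} = 1$ by $(iii)$ and $(iv)$, whence $J_z(z)J_z(0) = h(z,z)^{-\frac{\nu+g}{p}}h(z,z)^{\frac{\nu+g}{p}} = 1$. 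A holomorphic function of constant modulus is constant, so the product is identically $1$ on the connected domain $\Omega$. This yields the pointwise equality $(U_z^pM_{\xi}U_z^pf)(w) = \xi(\phi_z(w))f(w) = (\xi \circ \phi_z)(w)f(w)$, which is precisely the operator identity $U_z^pM_{\xi}U_z^p = M_{\xi \circ \phi_z}$.
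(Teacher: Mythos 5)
Your proof is correct and takes essentially the same route as the paper, which simply delegates this statement to the direct computation in \cite[proof of Proposition 19]{Hagger}: unwind both factors $U_z^p$, use that $\phi_z$ is an involution, and cancel the cocycle factors via identity $(vii)$ with $y = 0$, your constant-modulus/connectedness argument being a careful resolution of the branch ambiguity that the cited computation glosses over. The only step you leave implicit is that the chosen branch satisfies $h(0,z)^{-2(\nu+g)/p} = 1$ (needed for $J_z(0) = h(z,z)^{\frac{\nu+g}{p}}$ in your evaluation at $w = z$); this is true and follows by the same discreteness reasoning you use elsewhere, since $h(0,\cdot) \equiv 1$ on $\Omega$, the branch is continuous in $\bar z$, and it is normalized to be positive, hence equal to $1$, at the diagonal point $z = 0$.
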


\begin{prop} \label{prop5}
Let $A \in \BDO^p_{\nu}$. Then $A_x \in \BDO^p_{\nu}$ for all $x \in \beta\Omega$. Moreover, if $A$ is a band operator of band width $\omega$, then all operators $A_x$ are band operators of band width at most $\omega$.
\end{prop}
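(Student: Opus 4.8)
Prove Proposition 5 — that limit operators of band-dominated operators are band-dominated, and that the band width is preserved (at most $\omega$) for band operators.

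**Key tools available:**
- Prop 2: $A_x = \text{wlim}_{z \to x} U_z^p A U_z^p$ exists as weak limit
- Prop 4 (limit_operator_properties): linearity, norm bound, norm-limit continuity
- Prop: $U_z^p M_\xi U_z^p = M_{\xi \circ \phi_z}$
- Definition of band operator: $M_f A M_g = 0$ when $\dist_\beta(\text{supp} f, \text{supp} g) > \omega$

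**Strategy:** Prove the band operator case first, then bootstrap to band-dominated via norm limits.

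Let me think about this carefully.My plan is to prove the two claims in the natural order: first handle a single band operator of band width $\omega$, showing that each limit operator $A_x$ is again a band operator of band width at most $\omega$, and then deduce the general band-dominated case by a norm-approximation argument using Proposition \ref{limit_operator_properties}.

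For the band operator case, fix $f, g \in L^{\infty}(\Omega)$ with $\dist_{\beta}(\supp f, \supp g) > \omega$; the goal is to show $M_f A_x M_g = 0$. The idea is to pull the multiplication operators through the defining net. Since $A_x = \wlim_{z \to x} U_z^p A U_z^p$ and multiplication operators $M_f, M_g$ are bounded, weak convergence is preserved under left- and right-multiplication by fixed bounded operators, so
\[M_f A_x M_g = \wlim\limits_{z \to x} M_f U_z^p A U_z^p M_g.\]
Now I insert $U_z^p U_z^p = I$ on both sides to convert the outer multiplications into conjugated ones: using Proposition \ref{multiplication_shift} (i.e.\ $U_z^p M_\xi U_z^p = M_{\xi \circ \phi_z}$ together with $(U_z^p)^2 = I$), we get $M_f U_z^p = U_z^p M_{f \circ \phi_z}$ and $U_z^p M_g = M_{g \circ \phi_z} U_z^p$. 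Hence
\[M_f U_z^p A U_z^p M_g = U_z^p \, M_{f \circ \phi_z} A M_{g \circ \phi_z} \, U_z^p.\]
The point is that $\phi_z$ is a $\beta$-isometry, so $\dist_{\beta}(\supp(f \circ \phi_z), \supp(g \circ \phi_z)) = \dist_{\beta}(\supp f, \supp g) > \omega$ for every $z \in \Omega$. Since $A$ is a band operator of band width $\omega$, this forces $M_{f \circ \phi_z} A M_{g \circ \phi_z} = 0$ for all $z$. Therefore the entire net is identically zero, and its weak limit $M_f A_x M_g$ is $0$. This shows $A_x$ is a band operator of band width at most $\omega$, proving the second assertion.

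For the general band-dominated case, let $A \in \BDO^p_{\nu}$ and choose band operators $A_n \to A$ in norm (possible by Definition \ref{defn_BDO}). By the first part, each $(A_n)_x$ is a band operator, hence in $\BDO^p_{\nu}$. By the norm-continuity statement in Proposition \ref{limit_operator_properties}, $(A_n)_x \to A_x$ in norm, and since $\BDO^p_{\nu}$ is closed (Proposition \ref{prop0}(ii)), we conclude $A_x \in \BDO^p_{\nu}$.

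The only step requiring care — and the place I would slow down — is the justification that $M_f(\cdot)M_g$ commutes with the weak limit. Weak operator convergence $T_z \to T$ means $\scpr{T_z u}{v} \to \scpr{T u}{v}$ for all $u \in L^p_{\nu}$, $v \in L^q_{\nu}$; to pass $M_f$ and $M_g$ inside, I write $\scpr{M_f T_z M_g u}{v} = \scpr{T_z (M_g u)}{(M_f)^* v}$, which converges to $\scpr{T (M_g u)}{(M_f)^* v} = \scpr{M_f T M_g u}{v}$ because $M_g u \in L^p_{\nu}$ and $(M_f)^* v = M_{\bar f} v \in L^q_{\nu}$ are fixed vectors. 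Thus $M_f A_x M_g$ is genuinely the weak limit of the zero net and hence zero. The isometric invariance of $\dist_\beta$ under the biholomorphisms $\phi_z$ (recorded before Lemma \ref{lem2}) is the other ingredient that makes the band width bound uniform in $z$, and it is exactly why the constant $\omega$ is preserved rather than merely some finite width appearing.
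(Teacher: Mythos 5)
Your proof is correct and follows essentially the same route as the paper: the identity $M_fU_z^pAU_z^pM_g = U_z^pM_{f\circ\phi_z}AM_{g\circ\phi_z}U_z^p$ combined with the $\beta$-isometry of $\phi_z$ for the band case, then approximation via Proposition \ref{limit_operator_properties} and closedness of $\BDO^p_{\nu}$ for the band-dominated case. The only difference is that you spell out the weak-limit step (that $M_f(\cdot)M_g$ passes through weak operator limits), which the paper compresses into ``this argument directly generalizes to weak limits''; your justification of that step is sound.
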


\begin{proof}
In view of Proposition \ref{prop0} and Proposition \ref{limit_operator_properties}, it suffices to show that limit operators of band operators are again band operators and that the band width does not increase. So assume that $A \in \BDO^p_{\nu}$ is a band operator of band width $\omega$. Then $M_fAM_g = 0$ for all $f,g \in L^{\infty}(\Omega)$ with $\dist_{\beta}(\supp f,\supp g) > \omega$. For $U_z^pAU_z^p$ we observe $M_fU_z^pAU_z^pM_g = U_z^pM_{f \circ \phi_z}AM_{g \circ \phi_z}U_z^p$ and since $\phi_z$ is an isometry, $\dist_{\beta}(\supp(f \circ \phi_z),\supp(g \circ \phi_z)) = \dist_{\beta}(\supp f,\supp g)$. Therefore $U_z^pAU_z^p$ is a band operator of band width $\omega$ for all $z \in \Omega$. This argument directly generalizes to weak limits, i.e.~every $A_x$ is a band operator of band width at most $\omega$.
\end{proof}

\begin{lem} \label{lem4}
Let $A \in \BDO^p_{\nu}$ and assume that $\set{a_{j,t} : j \in \N, t \in (0,1)}$ is a family of measurable functions $a_{j,t} \from \Omega \to [0,1]$ with $\lim\limits_{t \to 0} \inf_{j \in \N} \dist_{\beta}(a_{j,t}^{-1}(U),a_{j,t}^{-1}(V)) \to \infty$ for all sets $U,V \subset [0,1]$ with $\dist(U,V) > 0$. Then
\[\lim\limits_{t \to 0} \sup\limits_{x \in \beta\Omega} \sup\limits_{j \in \N} \norm{[A_x,M_{a_{j,t}}]} = 0.\]
\end{lem}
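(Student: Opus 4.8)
The goal is to show that the commutator bound from Lemma \ref{lem3} survives the passage to limit operators, uniformly over all $x \in \beta\Omega$. The plan is to reduce the statement to Lemma \ref{lem3} applied to the operators $A_x$ themselves, using two facts already established: that each $A_x$ is band-dominated with controlled band width (Proposition \ref{prop5}), and that $\norm{A_x} \leq \norm{A}$ (Proposition \ref{limit_operator_properties}). The difficulty is that Lemma \ref{lem3} gives a threshold $t_0$ for band operators of a \emph{fixed} band width $\omega$, whereas $A$ is only band-dominated; so the first step is to handle the approximation, and the second is to make the resulting estimate uniform in $x$.

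First I would treat the case where $A$ is a genuine band operator of band width $\omega$. By Proposition \ref{prop5}, every $A_x$ is then a band operator of band width at most $\omega$, and $\norm{A_x} \leq \norm{A}$ by Proposition \ref{limit_operator_properties}. Since the family $\set{a_{j,t}}$ satisfies the separation hypothesis of Lemma \ref{lem3}, that lemma applies \emph{simultaneously} to all band operators of band width at most $\omega$ with norm at most $\norm{A}$ — this is exactly the ``remarkable'' uniformity the authors emphasize before Lemma \ref{lem3}. Hence for every $\epsilon > 0$ there is a $t_0 > 0$ such that for all $t < t_0$,
\[
\sup_{x \in \beta\Omega} \sup_{j \in \N} \norm{[A_x, M_{a_{j,t}}]} \leq 3 \norm{A} \epsilon,
\]
because each individual $A_x$ is one of the operators to which Lemma \ref{lem3} applies, and the bound $3\norm{A}\epsilon$ does not depend on $x$. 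Letting $\epsilon \to 0$ gives the claim in the band-operator case.

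For the general band-dominated $A$, I would approximate. Fix $\epsilon > 0$ and choose a band operator $B$ of some band width $\omega$ with $\norm{A - B} < \epsilon$. By Proposition \ref{limit_operator_properties} the map $A \mapsto A_x$ is linear and norm-contractive, so $\norm{A_x - B_x} = \norm{(A-B)_x} \leq \norm{A-B} < \epsilon$ for every $x$. Using the commutator identity $[A_x, M_{a_{j,t}}] = [B_x, M_{a_{j,t}}] + [(A-B)_x, M_{a_{j,t}}]$ together with $\norm{M_{a_{j,t}}} \leq 1$, I get
\[
\norm{[A_x, M_{a_{j,t}}]} \leq \norm{[B_x, M_{a_{j,t}}]} + 2\norm{(A-B)_x} \leq \norm{[B_x, M_{a_{j,t}}]} + 2\epsilon.
\]
Taking suprema over $j$ and $x$ and then applying the band-operator case to $B$ yields $\limsup_{t \to 0} \sup_x \sup_j \norm{[A_x, M_{a_{j,t}}]} \leq 2\epsilon$, and since $\epsilon$ was arbitrary the limit is $0$.

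The only point requiring care — and the main obstacle — is the \emph{uniformity in $x$}. Individually each $A_x$ is just some band-dominated operator, but a priori the threshold $t_0$ from Lemma \ref{lem3} could depend on the particular operator. The resolution is that Lemma \ref{lem3} is stated uniformly over the entire class of band operators of a given band width (with the estimate scaling by $\norm{A}$), so once Proposition \ref{prop5} pins down the band width of all $A_x$ by $\omega$ and Proposition \ref{limit_operator_properties} bounds their norms by $\norm{A}$, a single $t_0$ works for all $x$ at once. This is precisely why the authors remark that the estimate of Lemma \ref{lem3} holds simultaneously for all band operators of a fixed band width: it is exactly what makes the supremum over $\beta\Omega$ harmless.
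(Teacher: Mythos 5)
Your proof is correct and follows essentially the same route as the paper's: first the band-operator case, combining Proposition \ref{prop5} (band width of $A_x$ at most $\omega$), Proposition \ref{limit_operator_properties} ($\norm{A_x}\leq\norm{A}$) and the key uniformity of the threshold $t_0$ in Lemma \ref{lem3}, and then the approximation step for general band-dominated $A$ using contractivity of $A\mapsto A_x$ and the trivial commutator bound $\norm{[(A-B)_x,M_{a_{j,t}}]}\leq 2\norm{A-B}$. Your write-up is merely a more explicit version of the paper's two-line estimate, so there is nothing to add.
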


\begin{proof}
First assume that $A$ is a band operator. Then by Proposition \ref{prop5}, every $A_x$ is again a band operator of the same band width. Since also $\norm{A_x} \leq \norm{A}$ for all $x \in \beta\Omega$ by Proposition \ref{limit_operator_properties}, the assertion follows from Lemma \ref{lem3}. Now let $A \in \BDO^p_{\nu}$ be a general band-dominated operator. Then for every $\epsilon > 0$ there is a band operator $B$ such that $\norm{A-B} < \epsilon$. Using the above and Proposition \ref{limit_operator_properties} again, we get
\[\lim\limits_{t \to 0} \sup\limits_{x \in \beta\Omega} \sup\limits_{j \in \N} \norm{[A_x,M_{a_{j,t}}]} \leq \lim\limits_{t \to 0} \sup\limits_{x \in \beta\Omega} \sup\limits_{j \in \N} \norm{[B_x,M_{a_{j,t}}]} + \lim\limits_{t \to 0} \sup\limits_{x \in \beta\Omega} \sup\limits_{j \in \N} \norm{[A_x-B_x,M_{a_{j,t}}]} < 2\epsilon.\qedhere\]
\end{proof}

\begin{defn} \label{BDO_on_A^p}
We will call an operator $A \in \Lc(A^p_{\nu})$ band-dominated if $AP_{\nu} \in \Lc(L^p_{\nu})$ is band-dominated. Moreover, we define $A_x := (AP_{\nu})_x|_{A^p_{\nu}}$ for every $x \in \beta\Omega$.
\end{defn}

\begin{rem} \label{rem_Toeplitz}
According to Proposition \ref{Projections_band_dominated}, all operators in the Toeplitz algebra $\Tf_{p,\nu}$ are band-dominated. We thus emphasize that all subsequent results about band-dominated operators are of course valid for Toeplitz operators and, more generally, for all operators in the Toeplitz algebra.
\end{rem}

\begin{prop} \label{prop6}
Let $A \in \Lc(A^p_{\nu})$ be band-dominated. Then for every net $(z_{\gamma})$ in $\Omega$ converging to some $x \in \beta\Omega$ the net $U_{z_{\gamma}}^pAU_{z_{\gamma}}^p|_{A^p_{\nu}}$ converges strongly to $A_x$.
\end{prop}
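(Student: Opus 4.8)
The plan is to strengthen the weak convergence from Proposition \ref{prop2} to norm convergence by splitting $\Omega$ into a relatively compact Bergman ball, where holomorphy forces convergence, and a boundary tail, where band-dominatedness forces uniform smallness. First I would record the elementary identity that for $f \in A^p_\nu$ one has $U_z^p(AP_\nu)U_z^pf = U_z^pAU_z^pf$, since $U_z^p$ preserves $A^p_\nu$ (it maps holomorphic functions to holomorphic functions) and $P_\nu$ acts as the identity on $A^p_\nu$. Hence the net in question is exactly $U_{z_\gamma}^p(AP_\nu)U_{z_\gamma}^pf$ restricted to $A^p_\nu$, which by Proposition \ref{prop2} and the definition \eqref{weak_limit_operators} converges weakly in $L^p_\nu$ to $(AP_\nu)_xf = A_xf$. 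Writing $h_\gamma := U_{z_\gamma}^pAU_{z_\gamma}^pf$ and $h := A_xf$, all of which lie in $A^p_\nu$ with $\norm{h_\gamma} \leq \norm{A}\norm{f}$ uniformly (as $U_z^p$ is an isometry), the task reduces to showing $\norm{h_\gamma - h}_p \to 0$.

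Next I would establish convergence on Bergman balls. Point evaluations are bounded on $A^p_\nu$, locally uniformly in the evaluation point (the weight $h(z,z)^\nu$ is bounded above and below on compact subsets of $\Omega$, so a sub-mean-value estimate applies), so the weak convergence $h_\gamma \to h$ yields pointwise convergence $h_\gamma(w) \to h(w)$ for every $w \in \Omega$. The uniform bound $\norm{h_\gamma} \leq \norm{A}\norm{f}$ then makes $\set{h_\gamma}$ locally uniformly bounded, and Cauchy estimates turn this into local equicontinuity; together with pointwise convergence this gives $h_\gamma \to h$ uniformly on compact subsets of $\Omega$. Since each closed Bergman ball $\overline{D(0,R)}$ is such a compact subset, I obtain $\norm{\chi_{D(0,R)}(h_\gamma - h)}_p \to 0$ for every fixed $R$, where $\chi_E$ denotes the characteristic function of a set $E$.

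The main obstacle is the uniform control of the boundary tail $\sup_\gamma \norm{\chi_{\Omega \setminus D(0,R)}h_\gamma}_p$, and this is exactly where band-dominatedness enters. Given $\epsilon > 0$, I would approximate $AP_\nu$ in norm by a band operator $B$ of some width $\omega$; since the $U_z^p$ are isometries, $\norm{U_z^p(AP_\nu)U_z^pf - U_z^pBU_z^pf} < \epsilon\norm{f}$ uniformly in $z$. Each $U_{z_\gamma}^pBU_{z_\gamma}^p$ is again a band operator of width $\omega$ (as in the proof of Proposition \ref{prop5}), so for $R' := R - \omega - 1$ the supports of $\chi_{\Omega \setminus D(0,R)}$ and $\chi_{D(0,R')}$ have Bergman distance exceeding $\omega$, whence $\chi_{\Omega \setminus D(0,R)} U_{z_\gamma}^pBU_{z_\gamma}^p \chi_{D(0,R')} = 0$. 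Consequently $\norm{\chi_{\Omega \setminus D(0,R)}U_{z_\gamma}^pBU_{z_\gamma}^pf}_p \leq \norm{B}\norm{\chi_{\Omega \setminus D(0,R')}f}_p$, and the right-hand side tends to $0$ as $R \to \infty$ uniformly in $\gamma$ because $f$ is a single fixed element of $L^p_\nu$. Combined with the $\epsilon$-approximation, this yields $\lim_{R \to \infty}\sup_\gamma \norm{\chi_{\Omega \setminus D(0,R)}h_\gamma}_p = 0$, while the tail of the fixed function $h$ vanishes trivially. Finally I would assemble the estimate $\norm{h_\gamma - h}_p \leq \norm{\chi_{D(0,R)}(h_\gamma - h)}_p + \norm{\chi_{\Omega \setminus D(0,R)}h_\gamma}_p + \norm{\chi_{\Omega \setminus D(0,R)}h}_p$, choosing $R$ large to render the last two terms small uniformly in $\gamma$ and then passing to the limit in $\gamma$ on the first term, which gives $\norm{h_\gamma - h}_p \to 0$ and hence the claimed strong convergence on $A^p_\nu$.
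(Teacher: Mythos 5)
Your argument is correct, but it follows a genuinely different route from the paper's proof. The paper sets $B := AP_{\nu}$ and splits $(B_{z_{\gamma}}-B_x)f$ using a compactly supported Lipschitz function $a_t$ with $\norm{f-a_tf}\leq t$: the term $M_{a_t}(B_{z_{\gamma}}-B_x)f$ tends to zero because $M_{a_t}P_{\nu}$ is compact (\cite[Proposition 15]{Hagger}) and compact operators turn bounded weakly null nets into norm null nets, while the term $M_{1-a_t}(B_{z_{\gamma}}-B_x)f$ is controlled by commuting $M_{a_t}$ past $B_{z_{\gamma}}-B_x$ via the uniform commutator estimate of Lemma \ref{lem4} (ultimately Lemma \ref{lem3}), together with $\norm{M_{1-a_t}f}\leq t$. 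You instead cut with characteristic functions of Bergman balls: on $D(0,R)$ you upgrade weak convergence to locally uniform convergence by elementary complex analysis (locally uniform boundedness of point evaluations via the sub-mean-value property, Cauchy estimates, and a three-epsilon argument that indeed remains valid for nets, since finitely many ``eventually'' conditions can be intersected in a directed set), and on $\Omega\setminus D(0,R)$ you approximate $AP_{\nu}$ by a band operator and use only the defining annihilation property of band operators — supports at Bergman distance exceeding the band width — plus $\norm{\chi_{\Omega\setminus D(0,R')}f}_p\to 0$. The two proofs are parallel in architecture (holomorphy handles the interior, band-dominatedness yields uniformity in $\gamma$ on the tail), but yours is more elementary and self-contained: it needs neither \cite[Proposition 15]{Hagger} nor the commutator machinery of Lemmas \ref{lem3} and \ref{lem4}. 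What the paper's choice buys is economy across the paper as a whole, since those ingredients are needed anyway (Lemma \ref{lem3} reappears in Propositions \ref{prop9} and \ref{prop10}, and the compactness result \cite[Proposition 15]{Hagger} in Theorem \ref{thm5} and Lemma \ref{lem7}), whereas your interior argument leans on pointwise function theory that the operator-theoretic framework otherwise keeps in the background. Two steps you assert in passing do hold but deserve a word: $A_xf$ lies in $A^p_{\nu}$ because $A^p_{\nu}$, being a norm-closed subspace, is weakly closed (the paper phrases this as $B_x = P_{\nu}B_x$, using uniqueness of weak limits); and applying point evaluations to a weak limit taken in $L^p_{\nu}$ requires either a Hahn--Banach extension of the evaluation functional or the remark that weak convergence in $L^p_{\nu}$ of elements of $A^p_{\nu}$ to an element of $A^p_{\nu}$ is the same as weak convergence in $A^p_{\nu}$.
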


\begin{proof}
Let $f \in A^p_{\nu}$, $B := AP_{\nu} \in \BDO^p_{\nu}$ and fix $t \in (0,1)$. Choose a Lipschitz continuous function $a_t \from \Omega \to [0,1]$ with compact support, Lipschitz constant $t$ and $\norm{f-a_tf} \leq t$. Then
\begin{equation} \label{prop6_eq}
\norm{(B_{z_{\gamma}}-B_x)f} \leq \norm{M_{a_t}(B_{z_{\gamma}}-B_x)f} + \norm{M_{1-a_t}(B_{z_{\gamma}}-B_x)f}.
\end{equation}
As $B_{z_{\gamma}} = U_{z_{\gamma}}^pAP_{\nu}U_{z_{\gamma}}^p$ maps holomorphic functions to holomorphic functions, we have $B_{z_{\gamma}} = P_{\nu}B_{z_{\gamma}}$ for all $\gamma$. By Proposition \ref{prop2}, $B_{z_{\gamma}}$ converges weakly to $B_x$ and as weak limits are unique, this also implies $B_x = P_{\nu}B_x$. As $M_{a_t}P_{\nu}$ is compact by \cite[Propositon 15]{Hagger}, the first term in \eqref{prop6_eq} tends to $0$ as $z_{\gamma} \to x$. For the second term we have
\begin{align*}
\norm{M_{1-a_t}(B_{z_{\gamma}}-B_x)f} &\leq \norm{[M_{1-a_t},B_{z_{\gamma}}-B_x]f} + \norm{(B_{z_{\gamma}}-B_x)M_{1-a_t}f}\\
&\leq \norm{[I - M_{a_t},B_{z_{\gamma}}-B_x]f} + \norm{B_{z_{\gamma}}-B_x}\norm{M_{1-a_t}f}\\
&\leq \norm{[M_{a_t},B_{z_{\gamma}}-B_x]f} + \norm{B_{z_{\gamma}}-B_x}t\\
&\leq 2\sup\limits_{y \in \beta\Omega} \norm{[M_{a_t},B_y]f} + 2\sup\limits_{y \in \beta\Omega} \norm{B_y}t.
\end{align*}
Since $a_t$ is Lipschitz continuous with Lipschitz constant $t$, the first term tends to $0$ as $t \to 0$ by Lemma \ref{lem4}. The second term may be estimated by $2\norm{B}t$ and thus tends to $0$ as well. It follows that the second term in \eqref{prop6_eq} can be made as small as desired. We conclude that $U_{z_{\gamma}}^pAU_{z_{\gamma}}^p|_{A^p_{\nu}} = B_{z_{\gamma}}|_{A^p_{\nu}}$ converges strongly to $B_x|_{A^p_{\nu}} = (AP_{\nu})_x|_{A^p_{\nu}}$.
\end{proof}

Similarly as in Proposition \ref{limit_operator_properties}, the following properties hold for band-dominated operators on $A^p_{\nu}$. Note that due to the strong convergence we additionally have multiplicativity.

\begin{cor} \label{limit_operator_properties2}
Let $A,B \in \Lc(A^p_{\nu})$ be band-dominated and $x \in \beta\Omega$. Then $(A+B)_x = A_x+B_x$, $(AB)_x = A_xB_x$ and $\norm{A_x} \leq \norm{A}$. Moreover, if $(A_n)_{n \in \N}$ is a sequence of band-dominated operators in $\Lc(A^p_{\nu})$ that converges to $A$ in norm, then $(A_n)_x$ converges to $A_x$ in norm.
\end{cor}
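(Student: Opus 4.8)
The plan is to reduce all four assertions to Proposition \ref{prop6}, which upgrades the merely weak convergence of Proposition \ref{prop2} to strong convergence on $A^p_{\nu}$. First I would record that the relevant algebraic combinations stay band-dominated: since $\BDO^p_{\nu}$ is an algebra (Proposition \ref{prop0}$(ii)$) and
\[(A+B)P_{\nu} = AP_{\nu} + BP_{\nu}, \qquad (AB)P_{\nu} = (AP_{\nu})(BP_{\nu}),\]
both $A+B$ and $AB$ are again band-dominated, so that $(A+B)_x$ and $(AB)_x$ are defined via Definition \ref{BDO_on_A^p}. Here the second identity holds because $BP_{\nu}$ has range in $A^p_{\nu}$, on which $P_{\nu}$ acts as the identity. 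Additivity then follows immediately by restricting the $L^p_{\nu}$-identity $((A+B)P_{\nu})_x = (AP_{\nu})_x + (BP_{\nu})_x$ from Proposition \ref{limit_operator_properties} to $A^p_{\nu}$.

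For the norm bound I would use that $U_z^p$ maps holomorphic functions to holomorphic functions and is a surjective isometry with $(U_z^p)^2 = I$; hence $U_z^p|_{A^p_{\nu}}$ is an isometric involution of $A^p_{\nu}$, and conjugation by it preserves the operator norm, giving $\norm{U_z^p A U_z^p|_{A^p_{\nu}}} = \norm{A}$ for every $z \in \Omega$. Fixing a net $z_{\gamma} \to x$ and invoking the strong convergence $U_{z_{\gamma}}^p A U_{z_{\gamma}}^p|_{A^p_{\nu}} \to A_x$ of Proposition \ref{prop6}, continuity of the norm yields $\norm{A_x f} = \lim_{\gamma} \norm{U_{z_{\gamma}}^p A U_{z_{\gamma}}^p f} \leq \norm{A}\norm{f}$, and therefore $\norm{A_x} \leq \norm{A}$.

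The central step is multiplicativity, and this is exactly where strong rather than weak convergence is indispensable. Using $(U_{z_{\gamma}}^p)^2 = I$ on $A^p_{\nu}$, I would factor
\[(U_{z_{\gamma}}^p AB\, U_{z_{\gamma}}^p)|_{A^p_{\nu}} = \bigl[(U_{z_{\gamma}}^p A U_{z_{\gamma}}^p)|_{A^p_{\nu}}\bigr]\bigl[(U_{z_{\gamma}}^p B U_{z_{\gamma}}^p)|_{A^p_{\nu}}\bigr].\]
By Proposition \ref{prop6} the left-hand side converges strongly to $(AB)_x$, while the two factors on the right converge strongly to $A_x$ and $B_x$. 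Since the first factors all have norm equal to $\norm{A}$ by the computation above, they are uniformly bounded, so the product of the two strongly convergent nets converges strongly to $A_x B_x$ via the standard splitting $S_{\gamma} T_{\gamma} f - S T f = S_{\gamma}(T_{\gamma} - T)f + (S_{\gamma} - S)Tf$; uniqueness of strong limits then gives $(AB)_x = A_x B_x$. Finally, norm continuity combines the previous properties: $A_n - A$ is band-dominated, and additivity together with the norm bound give $\norm{(A_n)_x - A_x} = \norm{(A_n - A)_x} \leq \norm{A_n - A} \to 0$. The only genuine obstacle is multiplicativity, which fails at the level of the weak convergence available on $L^p_{\nu}$ and is recovered precisely because Proposition \ref{prop6} supplies strong convergence on $A^p_{\nu}$ together with the uniform bound coming from the isometry property of $U_z^p$.
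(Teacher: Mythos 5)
Your proof is correct and follows exactly the route the paper intends: the corollary is stated as a direct consequence of the strong convergence in Proposition \ref{prop6}, with multiplicativity obtained by factoring $U_{z_{\gamma}}^p AB\, U_{z_{\gamma}}^p$ through $(U_{z_{\gamma}}^p)^2 = I$ and using the uniform bound from the isometry property, and the remaining assertions following from additivity and the norm estimate as you describe. The paper leaves these details implicit, so your write-up is simply the fleshed-out version of the same argument.
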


We also have the following corollary to Proposition \ref{prop6}.

\begin{cor} \label{cor1}
For all band-dominated $A \in \Lc(A^p_{\nu})$ the map $A_{\bullet} \from \beta\Omega \to \Lc(A^p_{\nu})$, $x \mapsto A_x$ is bounded and continuous with respect to the strong operator topology. In particular, the two sets $\set{A_x: x \in \beta\Omega}$ and $\set{A_x : x \in \beta\Omega \setminus \Omega}$ are strongly compact.
\end{cor}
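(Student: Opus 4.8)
The plan is to prove the three assertions of the statement in turn. Boundedness is immediate: by Corollary \ref{limit_operator_properties2} we have $\norm{A_x} \leq \norm{A}$ for every $x \in \beta\Omega$, so $A_{\bullet}$ is bounded by $\norm{A}$. The genuine content is the strong continuity of $A_{\bullet}$, and once that is in hand the compactness of the two images is a formal consequence.

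For strong continuity I would essentially re-run the argument of Proposition \ref{prop6}, but with the net now living in $\beta\Omega$ rather than in $\Omega$. Write $B := AP_{\nu} \in \BDO^p_{\nu}$, so that $A_x = B_x|_{A^p_{\nu}}$. Fix $f \in A^p_{\nu}$ and $\epsilon > 0$, let $(x_{\gamma})$ be a net in $\beta\Omega$ with $x_{\gamma} \to x$, and choose, exactly as in Proposition \ref{prop6}, a Lipschitz function $a_t \from \Omega \to [0,1]$ of compact support with Lipschitz constant $t$ and $\norm{f - a_tf} \leq t$. Splitting
\[\norm{(B_{x_{\gamma}} - B_x)f} \leq \norm{M_{a_t}(B_{x_{\gamma}} - B_x)f} + \norm{M_{1-a_t}(B_{x_{\gamma}} - B_x)f},\]
the second ("tail") term is estimated verbatim as in Proposition \ref{prop6} by $2\sup_{y \in \beta\Omega} \norm{[M_{a_t},B_y]f} + 2\norm{B}t$, which by Lemma \ref{lem4} can be made $\leq \epsilon$ for all $\gamma$ simultaneously by fixing $t$ small enough.

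With $t$ now fixed, the first term is handled by combining weak continuity with compactness. Since the limit operators of $B$ map holomorphic functions to holomorphic functions (as observed in the proof of Proposition \ref{prop6}), we have $(B_{x_{\gamma}} - B_x)f \in A^p_{\nu}$, so $M_{a_t}(B_{x_{\gamma}} - B_x)f = M_{a_t}P_{\nu}(B_{x_{\gamma}} - B_x)f$. By Proposition \ref{prop2} the extension $\tilde{\Psi}_B$ is weakly continuous on all of $\beta\Omega$, whence $(B_{x_{\gamma}} - B_x)f \to 0$ weakly in $L^p_{\nu}$, and this net is bounded by $2\norm{B}\norm{f}$. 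Since $M_{a_t}P_{\nu}$ is compact by \cite[Proposition 15]{Hagger} and a compact operator sends bounded weakly null nets to norm null nets in the reflexive space $L^p_{\nu}$, the first term tends to $0$ along the net. Combining both estimates yields $\norm{(A_{x_{\gamma}} - A_x)f} \leq 2\epsilon$ eventually, which is strong continuity.

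Finally, compactness follows formally. The space $\beta\Omega$ is compact, and since $\Omega$ is a locally compact Hausdorff (hence open) subset of $\beta\Omega$, the set $\beta\Omega \setminus \Omega$ is closed and therefore also compact. As $x \mapsto A_x$ is continuous into $\Lc(A^p_{\nu})$ equipped with the (Hausdorff) strong operator topology, the images $\set{A_x : x \in \beta\Omega}$ and $\set{A_x : x \in \beta\Omega \setminus \Omega}$ are continuous images of compact sets and thus strongly compact. I expect the only real obstacle to be the strong-continuity step, specifically the passage from the weak continuity of Proposition \ref{prop2} to norm convergence of the localized part; it is the reflexivity of $L^p_{\nu}$ together with the compactness of $M_{a_t}P_{\nu}$ that lets the nets indexed by the Stone--\v{C}ech compactification behave like sequences.
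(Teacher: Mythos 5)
Your proof is correct and is essentially the paper's intended argument: the corollary is stated there without proof as a consequence of Proposition \ref{prop6}, and what makes it a corollary is precisely your observation that the proof of that proposition runs verbatim for nets in $\beta\Omega$, since the tail estimate is a supremum over all of $\beta\Omega$ (via Lemma \ref{lem4}) and the local term only needs the weak continuity of Proposition \ref{prop2} together with compactness of $M_{a_t}P_{\nu}$ applied to a bounded weakly null net (your appeal to reflexivity is superfluous here, as the subnet argument works in any Banach space). The boundedness claim and the topological deduction of compactness ($\Omega$ is locally compact and dense, hence open in $\beta\Omega$, so $\beta\Omega \setminus \Omega$ is compact, and continuous images of compact sets are compact in the strong operator topology) follow the standard route as well.
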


We conclude this section with an observation that will allow us to use some duality arguments.

\begin{prop} \label{prop_T_b_x}
Let $\frac{1}{p} + \frac{1}{q} = 1$. Then $T_{b_z} := (U_z^q|_{A^q_{\nu}})^*U_z^p|_{A^p_{\nu}} \in \Lc(A^p_{\nu})$ is invertible for every $z \in \Omega$. Moreover, if $(z_{\gamma})$ is a net in $\Omega$ that converges to some $x \in \beta\Omega$, then $T_{b_{z_{\gamma}}}$ converges strongly to an invertible operator $T_{b_x}$ and the inverses $T_{b_{z_{\gamma}}}^{-1}$ converge strongly to the inverse $T_{b_x}^{-1}$.
\end{prop}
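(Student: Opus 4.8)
The plan is to compute $T_{b_z}$ explicitly and then analyse the convergence of the resulting symbols. The invertibility for fixed $z$ needs no computation at all: since $(U_z^p)^2 = I$ and $U_z^p$ maps $A^p_{\nu}$ onto itself, the restriction $U_z^p|_{A^p_{\nu}}$ is an invertible isometry (its own inverse), and likewise $U_z^q|_{A^q_{\nu}}$ is an invertible isometry, so its adjoint $(U_z^q|_{A^q_{\nu}})^*$ is invertible on $(A^q_{\nu})^* \cong A^p_{\nu}$. Hence $T_{b_z}$ is a product of invertibles with $\norm{T_{b_z}} \leq 1$ and $\norm{T_{b_z}^{-1}} \leq 1$; in particular each $T_{b_z}$ is a surjective isometry and the inverses are uniformly bounded by $1$. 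To identify the operator I would use $(U_z^q|_{A^q_{\nu}})^* = P_{\nu}(U_z^q)^*|_{A^p_{\nu}}$ (valid because $P_{\nu}^*$ fixes holomorphic functions) together with properties $(iv)$, $(vii)$, $(viii)$ and the substitution $u \mapsto \phi_z(u)$, which yields on $L^p_{\nu}$ the identity $(U_z^q)^*U_z^p = M_{b_z}$ with the unimodular symbol
\[b_z(u) = \left(\frac{h(z,u)}{h(u,z)}\right)^{(\nu+g)(1 - 2/p)}, \qquad \abs{b_z} \equiv 1,\]
so that $T_{b_z} = P_{\nu}M_{b_z}|_{A^p_{\nu}}$ is the Toeplitz operator with symbol $b_z$ (note $b_z \equiv 1$ for $p = 2$, matching Remark \ref{rem_limit_operator_definition}).

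The heart of the matter, and the step I expect to be the main obstacle, is upgrading the evident weak convergence to strong convergence of $T_{b_{z_\gamma}}$. For each fixed $u$ the map $z \mapsto b_z(u)$ is bounded and continuous on $\Omega$, hence extends to $\beta\Omega$; set $b_x(u) := \lim_{z \to x} b_z(u)$, which again satisfies $\abs{b_x} \equiv 1$. A naive dominated-convergence argument fails because we are dealing with nets rather than sequences, and this is precisely where care is needed. I would resolve it using two observations. First, since $\abs{h(u,z)} > 0$ on $\overline{Q} \times \overline{\Omega}$ for every compact $Q \subset \Omega$ (by $(i)$ and $(iv)$), the family $\set{b_z|_Q : z \in \Omega}$ is equicontinuous and uniformly bounded, so by Arzel\`a--Ascoli the pointwise limit $b_{z_\gamma}|_Q \to b_x|_Q$ is automatically uniform; hence $b_{z_\gamma} \to b_x$ locally uniformly on $\Omega$ and $b_x$ is continuous. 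Second, since $v_{\nu}$ is a probability measure, $\abs{f}^p$ is tight, so splitting the integral over a large compact set and its complement gives $\norm{b_{z_\gamma}f - b_x f}_{L^p_{\nu}} \to 0$ for every $f \in L^p_{\nu}$. Applying the bounded projection $P_{\nu}$ then yields $T_{b_{z_\gamma}}f = P_{\nu}(b_{z_\gamma}f) \to P_{\nu}(b_x f)$, i.e.\ $T_{b_{z_\gamma}} \to T_{b_x}$ strongly, with $T_{b_x} := P_{\nu}M_{b_x}|_{A^p_{\nu}}$ depending only on $x$.

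It remains to establish invertibility of the limit and convergence of the inverses. As a strong limit of isometries, $T_{b_x}$ is an isometry, hence injective. Running the same computation and convergence argument on $A^q_{\nu}$ for the symbol $\overline{b_z}$ produces an isometry $\tilde{T}_{b_x} := P_{\nu}M_{\overline{b_x}}|_{A^q_{\nu}}$ on $A^q_{\nu}$ with $(\tilde{T}_{b_x})^* = T_{b_x}$. Since $T_{b_x} = (\tilde{T}_{b_x})^*$ is injective, $\tilde{T}_{b_x}$ has dense range; being an isometry it has closed range, so it is surjective and therefore invertible, whence $T_{b_x}$ is invertible as well. Finally, the uniform bound $\norm{T_{b_{z_\gamma}}^{-1}} \leq 1$ together with the resolvent-type identity
\[T_{b_{z_\gamma}}^{-1} - T_{b_x}^{-1} = T_{b_{z_\gamma}}^{-1}(T_{b_x} - T_{b_{z_\gamma}})T_{b_x}^{-1}\]
gives $\norm{(T_{b_{z_\gamma}}^{-1} - T_{b_x}^{-1})f} \leq \norm{(T_{b_{z_\gamma}} - T_{b_x})(T_{b_x}^{-1}f)} \to 0$ by the strong convergence already established, so that $T_{b_{z_\gamma}}^{-1} \to T_{b_x}^{-1}$ strongly, completing the proof.
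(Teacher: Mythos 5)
Your overall strategy is the same as the paper's: identify $T_{b_z}$ as the Toeplitz operator with unimodular symbol $b_z$, extend $z \mapsto b_z$ pointwise to $\beta\Omega$, upgrade to locally uniform convergence, deduce strong convergence of $T_{b_{z_\gamma}}$ by splitting the integral over a large compact set, and obtain invertibility of the limit from lower bounds on $T_{b_z}$ and on the dual-side operator $T_{\overline{b}_z}$, finishing with the resolvent identity. The convergence half of your argument is correct; your Arzel\`a--Ascoli route is a harmless variant of the paper's, which instead observes that the net $(z_\gamma)$ must converge to a single Euclidean boundary point $\alpha \in \partial\Omega$ and uses continuity of $(z,y) \mapsto b_z(y)$ on $\overline{\Omega} \times \Omega$.

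There is, however, a genuine error at the step you claim ``needs no computation at all'': the assertion that $T_{b_z}$ is a surjective isometry with $\norm{T_{b_z}} \leq 1$ and $\norm{T_{b_z}^{-1}} \leq 1$. The adjoint $(U_z^q|_{A^q_{\nu}})^*$ is indeed an isometry of $(A^q_{\nu})^*$ \emph{in the dual norm}, but the identification $(A^q_{\nu})^* \cong A^p_{\nu}$ via the pairing $\int_{\Omega} \overline{f}g \, \mathrm{d}v_{\nu}$ is not isometric when $p \neq 2$: by H\"older the functional induced by $f \in A^p_{\nu}$ has dual norm at most $\norm{f}_p$, but equality would require the extremal function (proportional to $\abs{f}^{p-2}f$) to lie in $A^q_{\nu}$, which generically fails; the two norms are merely equivalent with constants involving $\norm{P_{\nu}}$. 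Consequently $T_{b_z} = P_{\nu}M_{b_z}|_{A^p_{\nu}}$ is in general not an isometry (only $\norm{T_{b_z}} \leq \norm{P_{\nu}}$ holds, and your claim is true precisely for $p = 2$, where $T_{b_z} = I$), and the three places where you invoke it --- ``a strong limit of isometries is an isometry'', ``$\tilde{T}_{b_x}$ is an isometry, hence has closed range'', and the uniform bound $\norm{T_{b_{z_\gamma}}^{-1}} \leq 1$ in the last step --- are all unjustified as written. The repair is exactly the paper's argument and is available from an identity you yourself state: $(U_z^q|_{A^q_{\nu}})^* = P_{\nu}(U_z^q)^*|_{A^p_{\nu}}$ gives $T_{b_z}^{-1} = U_z^p(U_z^q|_{A^q_{\nu}})^* = U_z^pP_{\nu}(U_z^q)^*|_{A^p_{\nu}}$, hence $\norm{T_{b_z}^{-1}} \leq \norm{P_{\nu}}$ uniformly in $z$, i.e.~$\norm{T_{b_z}f} \geq \norm{P_{\nu}}^{-1}\norm{f}$, and likewise $\norm{T_{\overline{b}_z}g} \geq \norm{P_{\nu}}^{-1}\norm{g}$ on $A^q_{\nu}$. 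Strong limits preserve these lower bounds, so ``bounded below with adjoint bounded below'' replaces ``isometry'' in your dense-range/closed-range argument, and the constant $\norm{P_{\nu}}$ replaces $1$ in your resolvent-identity estimate. With these substitutions your proof is correct and coincides with the paper's.
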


\begin{proof}
Let $z \in \Omega$. As $U_z^p$ is an isometry, its restriction $U_z^p|_{A^p_{\nu}}$ is an isometry, too. Since $(U_z^p)^2 = I$ and $U_z^p(A^p_{\nu}) \subseteq A^p_{\nu}$, we get $U_z^p(A^p_{\nu}) = A^p_{\nu}$, i.e.~$U_z^p|_{A^p_{\nu}}$ is surjective. In particular, $T_{b_z} = (U_z^q|_{A^q_{\nu}})^*U_z^p|_{A^p_{\nu}}$ is invertible with
\[T_{b_z}^{-1} = U_z^p(U_z^q|_{A^q_{\nu}})^*.\]
In fact, we may also compute $(U_z^q|_{A^q_{\nu}})^*$ explicitly. Let $f \in A^p_{\nu} \cong (A^q_{\nu})^*$ and $g \in A^q_{\nu}$. Then, via the usual dual pairing and the standard transformation formulas, we obtain
\begin{align*}
((U_z^q)^*f)(g) &= f(U_z^q g)\\
&= \int_{\Omega} \overline{f(w)}g(\phi_z(w))\frac{h(z,z)^{\frac{\nu+g}{q}}}{h(w,z)^{\frac{2(\nu+g)}{q}}} \, \mathrm{d}v_{\nu}(w)\\
&= \int_{\Omega} \overline{f(\phi_z(y))}g(y)\frac{h(z,z)^{\frac{\nu+g}{q}}}{h(\phi_z(y),z)^{\frac{2(\nu+g)}{q}}} \frac{h(z,z)^{\nu+g}}{\abs{h(y,z)}^{2(\nu+g)}} \, \mathrm{d}v_{\nu}(y)\\
&= \int_{\Omega} \overline{f(\phi_z(y))}g(y)\frac{h(y,z)^{\frac{2(\nu+g)}{q}}}{h(z,z)^{\frac{\nu+g}{q}}} \frac{h(z,z)^{\nu+g}}{\abs{h(y,z)}^{2(\nu+g)}} \, \mathrm{d}v_{\nu}(y)\\
&= \int_{\Omega} \frac{h(y,z)^{(\frac{2}{q}-1)(\nu+g)}}{h(z,y)^{(1-\frac{2}{p})(\nu+g)}}\overline{f(\phi_z(y))}\frac{h(z,z)^{\frac{\nu+g}{p}}}{h(z,y)^{\frac{2(\nu+g)}{p}}} g(y) \, \mathrm{d}v_{\nu}(y)\\
&= \int_{\Omega} \frac{h(y,z)^{(\frac{1}{q}-\frac{1}{p})(\nu+g)}}{h(z,y)^{(\frac{1}{q}-\frac{1}{p})(\nu+g)}}\overline{(U_z^pf)(y)}g(y) \, \mathrm{d}v_{\nu}(y).
\end{align*}
This implies $(U_z^q|_{A^q_{\nu}})^* = P_{\nu}(U_z^q)^* = T_{b_z}U_z^p|_{A^p_{\nu}}$ with
\[b_z(y) := \frac{h(z,y)^{(\frac{1}{q}-\frac{1}{p})(\nu+g)}}{h(y,z)^{(\frac{1}{q}-\frac{1}{p})(\nu+g)}},\]
which also explains the notation $T_{b_z}$, i.e.~$T_{b_z}$ is a Toeplitz operator with symbol $b_z$.

Now let $(z_{\gamma})$ be a net in $\Omega$ that converges to some $x \in \beta\Omega \setminus \Omega$. Clearly, there is a subnet that converges to some point $\alpha \in \partial\Omega$ with respect to the Euclidean topology on $\overline{\Omega}$. Assume that there is another subnet that converges to a different point $\beta \in \partial\Omega$. This would imply that every continuous function on $\overline{\Omega}$ would coincide in $\alpha$ and $\beta$. As this is not the case, the whole net $(z_{\gamma})$ has to converge to the point $\alpha$.

As $F(z,y) := b_z(y)$ extends to a continuous function on $\overline{\Omega} \times \Omega$, we obtain that $b_{z_{\gamma}}$ converges uniformly on compact sets to $b_{\alpha} = F(\alpha,\cdot)$. Therefore $T_{b_{z_{\gamma}}}$ converges strongly to $T_{b_{\alpha}} =: T_{b_x}$ as $z_{\gamma} \to x$. Similarly, $T_{b_{z_{\gamma}}}^* = T_{\overline{b}_{z_{\gamma}}}$ converges strongly to $T_{\overline{b}_x} = T_{b_x}^* \in \Lc(A^q_{\nu})$. Moreover, $\norm{T_{b_z}^{-1}} \leq \norm{P_{\nu}}$ implies $\norm{T_{b_z}f} \geq \norm{P_{\nu}}^{-1}\norm{f}$ for all $f \in A^p_{\nu}$ and $z \in \Omega$. Similarly, $\norm{T_{\overline{b}_z}g} \geq \norm{P_{\nu}}^{-1}\norm{g}$ for all $g \in A^q_{\nu}$ and $z \in \Omega$. Taking the limit yields $\norm{T_{b_x}f} \geq \norm{P_{\nu}}^{-1}\norm{f}$ and $\norm{T_{b_x}^*g} \geq \norm{P_{\nu}}^{-1}\norm{g}$ for all $f \in A^p_{\nu}$, $g \in A^q_{\nu}$, which implies that $T_{b_x}$ is again invertible. $T_{b_{z_{\gamma}}}^{-1} \to T_{b_x}^{-1}$ strongly as $z_{\gamma} \to x$ follows easily as well.
\end{proof}

As a corollary we obtain the following important result.

\begin{cor} \label{cor_adjoints}
Let $A \in \Lc(A^p_{\nu})$ be band-dominated and $\frac{1}{p} + \frac{1}{q} = 1$. Then for every net $(z_{\gamma})$ in $\Omega$ converging to some $x \in \beta\Omega$ the net $U_{z_{\gamma}}^qA^*U_{z_{\gamma}}^q|_{A^q_{\nu}}$ converges strongly to $T_{\overline{b}_x}^{-1}(A_x)^*T_{\overline{b}_x}$. In particular, $(A^*)_x$ is invertible if and only if $A_x$ is.
\end{cor}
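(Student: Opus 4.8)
The plan is to read everything off from Proposition \ref{prop6} (applied once to $A$ and once to $A^*$) together with the explicit formulas of Proposition \ref{prop_T_b_x}, so that the only genuine work is to identify the limit. First I would observe that $A^*$ is itself band-dominated on $A^q_\nu$: since $AP_\nu \in \BDO^p_\nu$ by hypothesis, Proposition \ref{prop0}$(v)$ gives $(AP_\nu)^* \in \BDO^q_\nu$, and as $P_\nu$ is given by the same formula on $L^q_\nu$ one checks $A^*P_\nu = (AP_\nu)^*$, so $A^*$ is band-dominated in the sense of Definition \ref{BDO_on_A^p}. Applying Proposition \ref{prop6} with $p$ and $q$ interchanged to $A^*$ then shows that $U_{z_\gamma}^qA^*U_{z_\gamma}^q|_{A^q_\nu}$ converges strongly to $(A^*)_x$. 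Thus the strong convergence is automatic and the real content is the identity $(A^*)_x = T_{\overline{b}_x}^{-1}(A_x)^*T_{\overline{b}_x}$; once this is established the invertibility statement is immediate, since $T_{\overline{b}_x}$ is invertible by Proposition \ref{prop_T_b_x}, so $(A^*)_x$ is similar to $(A_x)^*$, and a bounded operator on a Banach space is invertible exactly when its adjoint is.

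To obtain the identity I would record two descriptions of $(U_z^p|_{A^p_\nu})^*$. Interchanging $p$ and $q$ in the computation of Proposition \ref{prop_T_b_x} (which negates the relevant exponent and hence replaces $b_z$ by $\overline{b}_z$) gives $(U_z^p|_{A^p_\nu})^* = T_{\overline{b}_z}\,U_z^q|_{A^q_\nu}$, while taking the Banach-space adjoint of $(U_z^q|_{A^q_\nu})^* = T_{b_z}U_z^p|_{A^p_\nu}$ and using $T_{b_z}^* = T_{\overline{b}_z}$ together with $(U_z^q|_{A^q_\nu})^2 = I$ gives $(U_z^p|_{A^p_\nu})^* = U_z^q|_{A^q_\nu}\,T_{\overline{b}_z}^{-1}$. (The equality of the two expressions is precisely the relation $U_z^qT_{\overline{b}_z}U_z^q|_{A^q_\nu} = T_{\overline{b}_z}^{-1}$.) Substituting the first description for the left factor and the second for the right factor in $(U_z^pAU_z^p|_{A^p_\nu})^* = (U_z^p|_{A^p_\nu})^*A^*(U_z^p|_{A^p_\nu})^*$, the inner $T_{\overline{b}_z}$ and $T_{\overline{b}_z}^{-1}$ cancel and I obtain the key conjugation relation $U_z^qA^*U_z^q|_{A^q_\nu} = T_{\overline{b}_z}^{-1}(U_z^pAU_z^p|_{A^p_\nu})^*T_{\overline{b}_z}$ for every $z \in \Omega$.

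Finally I would let $z_\gamma \to x$ in this relation. By Proposition \ref{prop6} we have $U_{z_\gamma}^pAU_{z_\gamma}^p|_{A^p_\nu} \to A_x$ strongly, and by Proposition \ref{prop_T_b_x} (with $p$ and $q$ interchanged) $T_{\overline{b}_{z_\gamma}} \to T_{\overline{b}_x}$ and $T_{\overline{b}_{z_\gamma}}^{-1} \to T_{\overline{b}_x}^{-1}$ strongly, with invertible limits. Pairing the relation with $f \in A^q_\nu$ and $\psi \in (A^q_\nu)^* \cong A^p_\nu$ and moving the two Toeplitz factors across the pairing yields $\scpr{U_{z_\gamma}^qA^*U_{z_\gamma}^q|_{A^q_\nu}f}{\psi} = \scpr{T_{\overline{b}_{z_\gamma}}f}{(U_{z_\gamma}^pAU_{z_\gamma}^p|_{A^p_\nu})(T_{\overline{b}_{z_\gamma}}^{-1})^*\psi}$, where $(T_{\overline{b}_{z_\gamma}}^{-1})^* = T_{b_{z_\gamma}}^{-1}$. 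Here $T_{\overline{b}_{z_\gamma}}f \to T_{\overline{b}_x}f$ and $T_{b_{z_\gamma}}^{-1}\psi \to T_{b_x}^{-1}\psi$ in norm, and since $U_{z_\gamma}^pAU_{z_\gamma}^p|_{A^p_\nu}$ converges strongly and is uniformly bounded, applying it to the norm-convergent arguments is again norm-convergent; passing to the limit and reversing the manipulation identifies the limit as $\scpr{T_{\overline{b}_x}^{-1}(A_x)^*T_{\overline{b}_x}f}{\psi}$. As the left-hand side already converges strongly to $(A^*)_x$, uniqueness of weak limits gives $(A^*)_x = T_{\overline{b}_x}^{-1}(A_x)^*T_{\overline{b}_x}$. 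The main obstacle is really the middle step: one needs \emph{both} forms of $(U_z^p|_{A^p_\nu})^*$ so that the inner Toeplitz factor cancels, for with a single form the surviving middle factor blocks a clean identification of the limit; the interchange of limits in the last step is then routine bookkeeping with uniformly bounded nets.
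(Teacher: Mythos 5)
Your proposal is correct and follows essentially the same route as the paper: band-dominatedness of $A^*$ via $A^*P_{\nu} = (AP_{\nu})^*$ and Proposition \ref{prop0}, strong convergence of $U_{z_{\gamma}}^qA^*U_{z_{\gamma}}^q|_{A^q_{\nu}}$ to $(A^*)_x$ via Proposition \ref{prop6}, the cancellation coming from the two factorizations of the adjoint of a restricted shift supplied by Proposition \ref{prop_T_b_x}, and identification of the limit by uniqueness of weak limits. The only (cosmetic) difference is that the paper takes the adjoint of the whole net, obtaining $\left(U_{z_{\gamma}}^qA^*U_{z_{\gamma}}^q|_{A^q_{\nu}}\right)^* = T_{b_{z_{\gamma}}}U_{z_{\gamma}}^pAU_{z_{\gamma}}^pT_{b_{z_{\gamma}}}^{-1}$, which converges strongly so no pairing argument is needed, whereas you keep the un-adjointed relation and therefore must pass the weakly convergent middle factor $\left(U_{z_{\gamma}}^pAU_{z_{\gamma}}^p|_{A^p_{\nu}}\right)^*$ through dual pairings.
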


\begin{proof}
As $A^*P_{\nu} = (AP_{\nu})^*$ (with the usual identification of $(A^p_{\nu})^*$ and $A^q_{\nu}$), Proposition \ref{prop0} implies that $A^*$ is band-dominated. By Proposition \ref{prop6}, $U_{z_{\gamma}}^qA^*U_{z_{\gamma}}^q|_{A^q_{\nu}}$ converges strongly to $(A^*)_x$. On the other hand,
\[\left(U_{z_{\gamma}}^qA^*U_{z_{\gamma}}^q|_{A^q_{\nu}}\right)^* = (U_{z_{\gamma}}^q|_{A^q_{\nu}})^*A(U_{z_{\gamma}}^q|_{A^q_{\nu}})^* = T_{b_{z_{\gamma}}}U_{z_{\gamma}}^pAU_{z_{\gamma}}^pT_{b_{z_{\gamma}}}^{-1}\]
converges strongly to $T_{b_x}A_xT_{b_x}^{-1}$ by Proposition \ref{prop_T_b_x}. As strong limits are unique, this implies $((A^*)_x)^* = T_{b_x}A_xT_{b_x}^{-1}$, or equivalently, $(A^*)_x = T_{\overline{b}_x}^{-1}(A_x)^*T_{\overline{b}_x}$.
\end{proof}

\section{Compactness} \label{compactness}

Before we proceed with the characterization of Fredholmness in terms of limit operators, we need to characterize compactness. The next proposition shows that all limit operators of compact operators vanish.

\begin{prop} \label{compact_limit_operators}
Let $K \in \Lc(A^p_{\nu})$ be compact. Then $K$ is band-dominated and $K_x = 0$ for all $x \in \beta\Omega \setminus \Omega$.
\end{prop}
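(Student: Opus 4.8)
The plan is to establish two things: first that $K$ is band-dominated, and second that $K_x = 0$ for every $x \in \beta\Omega \setminus \Omega$. The first assertion I would reduce to $KP_{\nu}$ being band-dominated in $\Lc(L^p_{\nu})$, as required by Definition \ref{BDO_on_A^p}. Since $K \in \Lc(A^p_{\nu})$ is compact and $P_{\nu}$ is bounded, the product $KP_{\nu}$ is a compact operator on $L^p_{\nu}$. By Proposition \ref{prop0}$(iv)$, $\Kc(L^p_{\nu}) \subseteq \BDO^p_{\nu}$ as a closed two-sided ideal, so $KP_{\nu} \in \BDO^p_{\nu}$ and hence $K$ is band-dominated in the sense of Definition \ref{BDO_on_A^p}. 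This step should be immediate.

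For the second assertion, I would use Proposition \ref{prop6}: for any net $(z_{\gamma})$ in $\Omega$ converging to $x$, the net $U_{z_{\gamma}}^p K U_{z_{\gamma}}^p|_{A^p_{\nu}}$ converges strongly to $K_x$. The key idea is that for a compact operator, conjugation by the isometries $U_{z_{\gamma}}^p$ should push all the ``mass'' toward the boundary and produce the zero operator in the strong limit when $x \in \beta\Omega \setminus \Omega$. Concretely, I would fix $f \in A^p_{\nu}$ and aim to show $\norm{U_{z_{\gamma}}^p K U_{z_{\gamma}}^p f} \to 0$. Since $U_{z_{\gamma}}^p$ is an isometry with $(U_{z_{\gamma}}^p)^2 = I$, this equals $\norm{K U_{z_{\gamma}}^p f}$, so it suffices to show $K U_{z_{\gamma}}^p f \to 0$ in norm.

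The natural route is to argue that $U_{z_{\gamma}}^p f$ converges weakly to $0$ in $A^p_{\nu}$, and then invoke that compact operators send weakly null nets to norm-null nets. Establishing weak convergence to $0$ is the step I expect to be the main obstacle. The heuristic is that $U_{z_{\gamma}}^p f = f \circ \phi_{z_{\gamma}} \cdot h(z_{\gamma},z_{\gamma})^{(\nu+g)/p} / h(\cdot,z_{\gamma})^{2(\nu+g)/p}$, and as $z_{\gamma} \to x \in \beta\Omega \setminus \Omega$ the underlying point $\phi_{z_{\gamma}}(w)$ escapes to $\partial\Omega$ for each fixed $w$, with the weight factor $h(z_{\gamma},z_{\gamma})^{(\nu+g)/p} \to 0$ by property $(ii)$, while $\norm{U_{z_{\gamma}}^p f} = \norm{f}$ stays bounded. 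To make weak convergence precise I would test against a dense set of functionals: for $g$ in (the image under $P_{\nu}$ of) a dense subset such as continuous compactly supported functions, I would estimate $\scpr{U_{z_{\gamma}}^p f}{g}$ and use the pointwise decay of the reproducing-type kernel together with the boundedness of $\norm{U_{z_{\gamma}}^p f}$ to conclude the pairing vanishes in the limit; a routine density and uniform-boundedness argument then upgrades this to all functionals. Once $U_{z_{\gamma}}^p f \rightharpoonup 0$ is secured, compactness of $K$ gives $K U_{z_{\gamma}}^p f \to 0$, hence $K_x f = 0$, and since $f$ was arbitrary, $K_x = 0$.
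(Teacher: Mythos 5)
Your first half coincides with the paper's: $KP_{\nu}$ is compact on $L^p_{\nu}$, hence lies in $\BDO^p_{\nu}$ by Proposition \ref{prop0}$(iv)$, which is exactly what Definition \ref{BDO_on_A^p} requires. Your second half, however, is a genuinely different route. The paper never discusses weak convergence of the vectors $U_{z_{\gamma}}^pf$; instead it fixes $f$, splits $KP_{\nu} = KP_{\nu}M_{\chi_{D(0,R)}} + KP_{\nu}M_{1-\chi_{D(0,R)}}$, rewrites the near term via Proposition \ref{multiplication_shift} and the isometry property as $\norm{KP_{\nu}}\,\norm{M_{\chi_{D(z_{\gamma},R)}}f}$, which vanishes by dominated convergence as $z_{\gamma} \to x$, and uses compactness of $K$ only through the fact that $\norm{KP_{\nu}M_{1-\chi_{D(0,R)}}} \to 0$ in norm as $R \to \infty$. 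That argument needs no kernel asymptotics and no weak-compactness considerations at all, which is precisely what makes it short.

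Your route can be completed, but the step you yourself flag as the main obstacle has a real gap: the justification you sketch for $U_{z_{\gamma}}^pf \rightharpoonup 0$ would not survive being written out. For $w$ in the (compact) support of a test function $\psi$, the kernel factor does decay like $h(z_{\gamma},z_{\gamma})^{(\nu+g)/p}$, since $\abs{h(w,z_{\gamma})}$ stays bounded away from $0$ and $\infty$ there; but the factor $f(\phi_{z_{\gamma}}(w))$ is not controlled for general $f \in A^p_{\nu}$. The sharp pointwise estimate is $\abs{f(\zeta)} \leq C\norm{f}\,h(\zeta,\zeta)^{-(\nu+g)/p}$, and by property $(vii)$ one has $h(\phi_{z_{\gamma}}(w),\phi_{z_{\gamma}}(w)) = h(z_{\gamma},z_{\gamma})h(w,w)/\abs{h(w,z_{\gamma})}^2$, which for fixed $w$ is comparable to $h(z_{\gamma},z_{\gamma})$. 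So ``kernel decay plus boundedness of $\norm{U_{z_{\gamma}}^pf}$'' is consistent with $\abs{(U_{z_{\gamma}}^pf)(w)}$ merely staying bounded: the possible growth of $f\circ\phi_{z_{\gamma}}$ can exactly cancel the decay of the kernel factor. What is missing is the little-oh refinement that $\abs{f(\zeta)}h(\zeta,\zeta)^{(\nu+g)/p} \to 0$ as $\zeta \to \partial\Omega$ for each fixed $f \in A^p_{\nu}$; this is true, but it is itself proved by a density argument, not a formal consequence of the two facts you invoke. Concretely, you must approximate in $f$ as well as in the functionals: for bounded holomorphic $f$ (such functions are dense in $A^p_{\nu}$) the pairing against compactly supported $\psi$ tends to $0$ since $\abs{(U_{z_{\gamma}}^pf)(w)} \leq \norm{f}_{\infty}\,h(z_{\gamma},z_{\gamma})^{(\nu+g)/p}\cdot\abs{h(w,z_{\gamma})}^{-2(\nu+g)/p}$ uniformly on $\supp\psi$, and then the isometry identity $\norm{U_{z_{\gamma}}^p(f-f_n)} = \norm{f-f_n}$ transfers this to all $f$. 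Two smaller points: weak-to-norm continuity of compact operators holds for \emph{bounded} weakly null nets only (fine here, since $\norm{U_{z_{\gamma}}^pf}=\norm{f}$, but it must be said, as unbounded weakly null nets can violate it); and you need that $z_{\gamma} \to x \in \beta\Omega\setminus\Omega$ forces $z_{\gamma}$ to leave every compact subset of $\Omega$, so that $h(z_{\gamma},z_{\gamma}) \to 0$ --- this is the argument appearing at the start of the proof of Proposition \ref{prop_T_b_x}.
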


\begin{proof}
That $K$ is band-dominated follows immediately from Proposition \ref{prop0}. Proposition \ref{prop6} thus implies that $U_{z_{\gamma}}^pKU_{z_{\gamma}}^p|_{A^p_{\nu}}$ converges strongly to $K_x$ for every net $(z_{\gamma})$ that converges to $x \in \beta\Omega \setminus \Omega$. Fix $f \in A^p_{\nu}$ and set $D(w,R) := \set{z \in \Omega : \beta(w,z) < R}$ for $R > 0$ and $w \in \Omega$. Proposition \ref{multiplication_shift} and the fact that $U_{z_{\gamma}}^p$ is an isometry imply
\begin{align*}
\norm{U_{z_{\gamma}}^pKU_{z_{\gamma}}^pf} &\leq \norm{U_{z_{\gamma}}^pKP_{\nu}M_{\chi_{D(0,R)}}U_{z_{\gamma}}^pf} + \norm{U_{z_{\gamma}}^pKP_{\nu}M_{1-\chi_{D(0,R)}}U_{z_{\gamma}}^pf}\\
&\leq \norm{KP_{\nu}}\norm{M_{\chi_{D(0,R)}}U_{z_{\gamma}}^pf} + \norm{KP_{\nu}M_{1-\chi_{D(0,R)}}}\norm{f}\\
&= \norm{KP_{\nu}}\norm{M_{\chi_{D(z_{\gamma},R)}}f} + \norm{KP_{\nu}M_{1-\chi_{D(0,R)}}}\norm{f}.
\end{align*}
As $\chi_{D(z_{\gamma},R)}$ converges pointwise to $0$, the first term tends to $0$ for every fixed $R > 0$ as $z_{\gamma} \to x$. On the other hand, $1-\chi_{D(0,R)}$ converges pointwise to $0$ as $R \to \infty$. Therefore $M_{1-\chi_{D(0,R)}}$ converges strongly to $0$ and since $K$ is compact, $KP_{\nu}M_{1-\chi_{D(0,R)}}$ tends to $0$ in norm as $R \to \infty$. Therefore, if $R$ is chosen sufficiently large, the second term can be made as small as desired. We thus conclude $\norm{U_{z_{\gamma}}^pKU_{z_{\gamma}}^pf} \to 0$ as $z_{\gamma} \to x$. As this is true for every $f \in A^p_{\nu}$, $K_x = 0$ follows.
\end{proof}

Our goal for this section is to show that the converse is true as well, i.e.~if $K \in \Lc(A^p_{\nu})$ is band-dominated and $K_x = 0$ for all $x \in \beta\Omega \setminus \Omega$, then $K$ must be compact. For this we need a few auxiliary results.

\begin{prop} \label{interchange_prop}
Let $\alpha = (\frac{2}{p} - 1)g + \frac{2\nu}{p}$ and assume that $(\alpha,\nu,p)$ is admissible. Then $P_{\alpha}U_z^p = U_z^pP_{\alpha}$.
\end{prop}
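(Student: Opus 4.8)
The plan is to reduce the claim to the conjugation identity $U_z^p P_\alpha U_z^p = P_\alpha$, which is equivalent to $P_\alpha U_z^p = U_z^p P_\alpha$ because $(U_z^p)^2 = I$. Since $(\alpha,\nu,p)$ is admissible, $P_\alpha$ is a bounded operator on $L^p_\nu$ given by the integral formula
\[(P_\alpha f)(w) = c_\alpha \int_\Omega f(u)\, h(w,u)^{-\alpha-g} h(u,u)^\alpha \, \mathrm{d}v(u),\]
so it suffices to compute the kernel of $U_z^p P_\alpha U_z^p$ and verify that it coincides with that of $P_\alpha$. Boundedness guarantees absolute convergence of the integrals and legitimizes the change of variables used below.

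First I would write $(U_z^p P_\alpha U_z^p f)(w)$ out explicitly: applying $U_z^p$, then $P_\alpha$, then $U_z^p$ produces an expression with two copies of the weight $h(z,z)^{(\nu+g)/p} h(\cdot,z)^{-2(\nu+g)/p}$ and one copy of the kernel $h(\phi_z(w),y)^{-\alpha-g} h(y,y)^\alpha$ integrated against $\mathrm{d}v(y)$. The crucial step is the substitution $y = \phi_z(u)$. Using that $\phi_z$ is an involution (so $f(\phi_z(y)) = f(u)$), the transformation law $(vii)$, the values $h(z,0) = h(u,0) = 1$ from $(iii)$, the relation $h(z,u) = \overline{h(u,z)}$ from $(iv)$, and the Jacobian $\mathrm{d}v(\phi_z(u)) = h(z,z)^g \abs{h(u,z)}^{-2g}\,\mathrm{d}v(u)$ (derived from $(viii)$ together with the diagonal value $h(\phi_z(u),\phi_z(u)) = h(z,z)h(u,u)\abs{h(u,z)}^{-2}$), every factor of the integrand can be rewritten in terms of $h(w,u)$, $h(u,u)$, $h(u,z)$, $h(z,u)$, $h(z,z)$ and $h(w,z)$. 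In particular $h(\phi_z(u),z) = h(z,z)/h(u,z)$, which handles the inner weight.

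The heart of the matter is then the bookkeeping of exponents, which I would organize by collecting the power of each factor separately. The powers of $h(z,u)$ cancel identically; the powers of $h(z,z)$ cancel against the two outer weights; and the only surviving $z$-dependent factors are $h(u,z)$ raised to the power $\frac{2(\nu+g)}{p} - \alpha - g$ and $h(w,z)$ raised to the power $(\alpha+g) - \frac{2(\nu+g)}{p}$. Both exponents vanish \emph{precisely} when $\alpha + g = \frac{2(\nu+g)}{p}$, i.e. $\alpha = \left(\frac{2}{p}-1\right)g + \frac{2\nu}{p}$, which is exactly the hypothesis on $\alpha$. With these cancellations the integrand collapses to $c_\alpha f(u) h(w,u)^{-\alpha-g} h(u,u)^\alpha$, the kernel of $P_\alpha$, proving the identity.

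The main obstacle I anticipate is not the exponent count but the consistent handling of the fractional powers $h(\cdot,\cdot)^\lambda$: the law $(vii)$ is an identity for $h$ itself, and I must ensure it survives being raised to non-integer powers with the branches fixed earlier (holomorphic in the first and antiholomorphic in the second variable, positive on the diagonal). I would justify this by restricting each exponentiated identity to the diagonal, where both sides are positive reals and hence agree for the chosen branch, and then extending by holomorphy in the first variable and antiholomorphy in the second, both sides being continuous and nowhere vanishing on the relevant (simply connected) set. Once branch consistency is secured, the computation reduces to the exponent arithmetic described above.
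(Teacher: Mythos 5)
Your proposal is correct and follows essentially the same route as the paper: a direct kernel computation via the substitution $u \mapsto \phi_z(u)$, using the transformation identities $(iii)$, $(iv)$, $(vii)$, $(viii)$, with the exponent bookkeeping showing that the surviving powers of $h(w,z)$ and $h(u,z)$ vanish exactly when $\alpha = (\frac{2}{p}-1)g + \frac{2\nu}{p}$. The only differences are cosmetic --- you verify the equivalent conjugation identity $U_z^pP_{\alpha}U_z^p = P_{\alpha}$ rather than $P_{\alpha}U_z^p = U_z^pP_{\alpha}$ directly, and you add an explicit justification of branch consistency for the fractional powers, which the paper leaves implicit.
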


\begin{proof}
Using the usual transformation formulas, we get
\begin{align}
(P_{\alpha}U_z^pf)(x) &= \int_{\Omega} f(\phi_z(w))\frac{h(z,z)^{\frac{\nu+g}{p}}}{h(w,z)^{\frac{2(\nu+g)}{p}}} h(x,w)^{-\alpha-g}\, \mathrm{d}v_{\alpha}(w)\notag\\
&= \int_{\Omega} f(y)\frac{h(z,z)^{\frac{\nu+g}{p}}}{h(\phi_z(y),z)^{\frac{2(\nu+g)}{p}}} h(x,\phi_z(y))^{-\alpha-g} \frac{h(z,z)^{\alpha+g}}{\abs{h(y,z)}^{2(\alpha+g)}} \, \mathrm{d}v_{\alpha}(y)\notag\\
&= \int_{\Omega} f(y)\frac{h(z,z)^{\frac{\nu+g}{p}}h(y,z)^{\frac{2(\nu+g)}{p}}}{h(z,z)^{\frac{2(\nu+g)}{p}}} \frac{h(x,z)^{-\alpha-g}h(\phi_z(x),y)^{-\alpha-g}}{h(z,y)^{-\alpha-g}} \frac{h(z,z)^{\alpha+g}}{\abs{h(y,z)}^{2(\alpha+g)}} \, \mathrm{d}v_{\alpha}(y)\notag\\
&= \frac{h(z,z)^{-\frac{\nu+g}{p}+\alpha+g}}{h(x,z)^{\alpha+g}}\int_{\Omega} f(y)h(y,z)^{\frac{2(\nu+g)}{p}-\alpha-g} h(\phi_z(x),y)^{-\alpha-g} \, \mathrm{d}v_{\alpha}(y)\notag\\
&= \frac{h(z,z)^{\frac{\nu+g}{p}}}{h(x,z)^{\frac{2(\nu+g)}{p}}}\int_{\Omega} f(y)h(\phi_z(x),y)^{-\alpha-g} \, \mathrm{d}v_{\alpha}(y)\notag\\
&= (U_z^pP_{\alpha}f)(x).\qedhere
\end{align}
\end{proof}

Let $r_t := \sup\limits_{j \in \N} \diam_{\beta}\supp \varphi_{j,t}$ for $t \in (0,1)$. By property $(b)$ of the functions $\varphi_{j,t}$, $r_t$ is finite for all $t$. Similarly as in \cite{Hagger,HaLiSe}, we define
\[\vertiii{A|_F}_t := \sup\set{\norm{Af} : f \in L^p_{\nu}, \norm{f} = 1, \supp f \subseteq D(w,r_t) \cap F \text{ for some } w \in \Omega}\]
and
\[\norm{A|_F} := \sup\set{\norm{Af} : f \in L^p_{\nu}, \norm{f} = 1, \supp f \subseteq F}\]
for $t \in (0,1)$, $A \in \Lc(L^p_{\nu})$ and a Borel set $F \subseteq \Omega$.

\begin{prop} \label{prop9}
Let $A \in \BDO^p_{\nu}$. Then for every $\epsilon > 0$ there exists a $t \in (0,1)$ such that for all Borel sets $F \subseteq \Omega$ and all operators $B$ in the set
\[\{A\} \cup \set{A_x : x \in \beta\Omega \setminus \Omega}\]
it holds
\[\|B|_F\| \geq \vertiiis{B|_F}_t \geq \|B|_F\| - \epsilon.\]
\end{prop}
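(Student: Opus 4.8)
The inequality $\norm{B|_F} \ge \vertiiis{B|_F}_t$ is immediate, since the supremum defining $\vertiiis{B|_F}_t$ ranges over a subset of the functions competing for $\norm{B|_F}$; all the work is in the lower bound $\vertiiis{B|_F}_t \ge \norm{B|_F} - \epsilon$. The plan is to take a near-optimal vector $f$ for $\norm{B|_F}$ (so $\supp f \subseteq F$, $\norm{f} = 1$, $\norm{Bf} \ge \norm{B|_F} - \delta$), chop it up with the partition of unity, and then argue that one small-support piece is already almost optimal. The key device I would use is to localize with $c_{j,t} := \varphi_{j,t}^{1/p}$ rather than with $\varphi_{j,t}$ itself, because then the weights satisfy $\sum_j c_{j,t}^p = \sum_j \varphi_{j,t} = 1$ pointwise; this is exactly what lets the argument recover the full norm $\norm{Bf}$ rather than an $N^{1-p}$-fraction of it (localizing directly with $\varphi_{j,t}$ would lose such a factor). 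Note that $\supp c_{j,t} = \supp \varphi_{j,t}$ has $\beta$-diameter at most $r_t$, so $g_j := M_{c_{j,t}} f$ is supported in $D(w,r_t) \cap F$ for a suitable $w$, i.e.~each $g_j$ is an admissible competitor for $\vertiiis{B|_F}_t$.

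Writing $B g_j = M_{c_{j,t}} Bf + [B, M_{c_{j,t}}] f$ and using Minkowski's inequality in $\ell^p$ together with $\sum_j \norm{M_{c_{j,t}} Bf}^p = \int_{\Omega} (\sum_j \varphi_{j,t}) \abs{Bf}^p \, \mathrm{d}v_{\nu} = \norm{Bf}^p$, I obtain
\[\Bigl(\sum_j \norm{Bg_j}^p\Bigr)^{1/p} \ge \norm{Bf} - \Bigl(\sum_j \norm{[B,M_{c_{j,t}}]f}^p\Bigr)^{1/p}.\]
Since also $\sum_j \norm{g_j}^p = \int_{\Omega} (\sum_j \varphi_{j,t}) \abs{f}^p \, \mathrm{d}v_{\nu} = 1$, a simple averaging argument then produces an index $j$ with $\norm{Bg_j} \ge (\norm{Bf} - S_t)\norm{g_j}$, where $S_t := \sup\set{(\sum_j \norm{[B,M_{c_{j,t}}]f}^p)^{1/p} : \norm{f} = 1}$. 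Dividing by $\norm{g_j}$ and recalling that $g_j$ is admissible yields $\vertiiis{B|_F}_t \ge \norm{Bf} - S_t \ge \norm{B|_F} - \delta - S_t$, so after letting $\delta \to 0$ everything reduces to showing that $S_t \to 0$ as $t \to 0$, uniformly over $B \in \set{A} \cup \set{A_x : x \in \beta\Omega \setminus \Omega}$.

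The estimate $S_t \to 0$ is the heart of the matter and the step I expect to be hardest, because Lemma \ref{lem3} and Lemma \ref{lem4} only control the \emph{individual} commutator norms $\sup_j \norm{[B, M_{c_{j,t}}]}$, whereas $S_t$ is a vector-valued (square-function type) quantity summing over all $j$. First I would verify that $c_{j,t} = \varphi_{j,t}^{1/p}$ satisfies the separation hypothesis of Lemma \ref{lem3}, which follows from property $(c)$ of $\varphi_{j,t}$ upon noting that $c_{j,t}^{-1}(U) = \varphi_{j,t}^{-1}(\set{s^p : s \in U})$ and that $s \mapsto s^p$ maps sets of positive distance to sets of positive distance. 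For a band operator $B$ of width $\omega$ and $t$ small enough that $\omega < \frac{1}{3t}$, the band condition together with $\psi_{j,t} \equiv 1$ on $\supp \varphi_{j,t}$ gives the re-localization $[B,M_{c_{j,t}}] = [B,M_{c_{j,t}}]M_{\psi_{j,t}}$; hence $\norm{[B,M_{c_{j,t}}]f} \le \norm{[B,M_{c_{j,t}}]}\,\norm{M_{\psi_{j,t}}f}$, and the bounded overlap $\sum_j \psi_{j,t}^p \le N$ converts the sup-bound into $(\sum_j \norm{[B,M_{c_{j,t}}]f}^p)^{1/p} \le N^{1/p}\sup_j\norm{[B,M_{c_{j,t}}]}\,\norm{f}$, which tends to $0$ by Lemma \ref{lem3}.

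To pass from band operators to a general band-dominated $B$, I would approximate by a band operator $B_0$ with $\norm{B-B_0} < \epsilon_0$ and split $[B,M_{c_{j,t}}] = [B_0,M_{c_{j,t}}] + [B-B_0,M_{c_{j,t}}]$; the remainder is controlled \emph{without} locality, since $\sum_j \norm{(B-B_0)M_{c_{j,t}}f}^p \le \norm{B-B_0}^p\sum_j\norm{M_{c_{j,t}}f}^p = \norm{B-B_0}^p$ and $\sum_j\norm{M_{c_{j,t}}(B-B_0)f}^p = \norm{(B-B_0)f}^p$, both again using $\sum_j c_{j,t}^p = 1$. Finally, uniformity over the limit operators is obtained by fixing one band approximant $A_0$ of $A$ and invoking Proposition \ref{prop5} (limit operators of a band operator are band operators of the same width), so that $(A_0)_x$ is a valid band approximant of $A_x$ with $\norm{A_x - (A_0)_x} \le \norm{A - A_0}$; here $\sup_{x \in \beta\Omega}\sup_j\norm{[(A_0)_x, M_{c_{j,t}}]} \to 0$ is precisely Lemma \ref{lem4}, while the case $B = A$ is Lemma \ref{lem3}. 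Combining these yields $\sup_B S_t \le N^{1/p}\theta(t) + 2\epsilon_0$ with $\theta(t) \to 0$, and choosing $\epsilon_0$ small and then $t$ small completes the argument.
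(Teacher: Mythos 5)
Your proposal is correct and follows essentially the same route as the paper's proof: localize a near-optimal $f$ with the functions $\varphi_{j,t}^{1/p}$ (so the $p$-th powers sum to $1$), apply Minkowski's inequality, use the band width together with the cutoffs $\psi_{j,t}$ and the bounded-overlap constant $N$ to reduce the error to $\sup_j\norm{[B,M_{\varphi_{j,t}^{1/p}}]}$ controlled by Lemma \ref{lem3} (with Proposition \ref{prop5}/Lemma \ref{lem4} giving uniformity over the limit operators), then pigeonhole to extract a good index $j$ and pass to general band-dominated operators by approximation. The only differences (two-term decomposition with a re-localized commutator instead of the paper's three-term split, and approximating inside the commutator sum rather than at the level of the seminorms) are cosmetic rearrangements of the same argument.
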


\begin{proof}
The first inequality is clear by definition. For the second inequality we first assume that $A$ is a band operator. Then, by Proposition \ref{prop5}, all limit operators $A_x$ have the same band width as $A$. Let $B \in \{A\} \cup \set{A_x : x \in \beta\Omega \setminus \Omega}$, $F \subseteq \Omega$ a Borel set and choose $f \in L^p_{\nu}$ with $\norm{f} = 1$ and $\supp f \subseteq F$ such that
\[\|Bf\| \geq \|B|_F\| - \frac{\epsilon}{2}.\]
Moreover, let $\varphi_{j,t}$ and $\psi_{j,t}$ be as defined in Section \ref{BSDs}. Then
\begin{align*}
\left(\sum\limits_{j = 1}^{\infty} \norm{BM_{\varphi_{j,t}^{1/p}}f}^p\right)^{1/p} &= \left(\sum\limits_{j = 1}^{\infty} \norm{BM_{\varphi_{j,t}^{1/p}}M_{\psi_{j,t}}f}^p\right)^{1/p}\\
&\geq \left(\sum\limits_{j = 1}^{\infty} \norm{M_{\varphi_{j,t}^{1/p}}Bf}^p\right)^{1/p} - \left(\sum\limits_{j = 1}^{\infty} \norm{M_{\varphi_{j,t}^{1/p}}BM_{1-\psi_{j,t}}f}^p\right)^{1/p}\\
&\qquad - \left(\sum\limits_{j = 1}^{\infty} \norm{[B,M_{\varphi_{j,t}^{1/p}}]M_{\psi_{j,t}}f}^p\right)^{1/p}
\end{align*}
by Minkowski's inequality. The first term is exactly $\|Bf\|$ since $\sum\limits_{j = 1}^{\infty} \abs{\varphi_{j,t}(z)} = 1$. The second term vanishes if $\dist_{\beta}(\supp \varphi_{j,t},\supp(1 - \psi_{j,t})) = \frac{2}{3t}$ exceeds the band width of $A$. The third term can be estimated as
\begin{align*}
\left(\sum\limits_{j = 1}^{\infty} \norm{[B,M_{\varphi_{j,t}^{1/p}}]M_{\psi_{j,t}}f}^p\right)^{1/p} &\leq \sup\limits_{j \in \N} \norm{[B,M_{\varphi_{j,t}^{1/p}}]}\left(\sum\limits_{j = 1}^{\infty} \norm{M_{\psi_{j,t}}f}^p\right)^{1/p}\notag\\
&\leq N^{1/p}\sup\limits_{j \in \N} \norm{[B,M_{\varphi_{j,t}^{1/p}}]}
\end{align*}
because every $z \in \Omega$ is contained in at most $N$ sets $\supp \psi_{j,t}$ and $\norm{f} = 1$. Now observe that the functions $\varphi_{j,t}^{1/p}$ satisfy the assumptions in Lemma \ref{lem3}. Indeed, let $U,V \subset [0,1]$ with $\dist(U,V) > 0$ and $w_{j,t} \in (\varphi_{j,t}^{1/p})^{-1}(U)$, $z_{j,t} \in (\varphi_{j,t}^{1/p})^{-1}(V)$. Clearly, we have $\dist(U^p,V^p) > 0$ as well and therefore
\[\beta(z_{j,t},w_{j,t}) \geq \frac{1}{6Nt}\abs{\varphi_{j,t}(z_{j,t}) - \varphi_{j,t}(w_{j,t})} \geq \frac{1}{6Nt}\dist(U^p,V^p) \to \infty\]
as $t \to 0$. Lemma \ref{lem3} thus implies that for every $\delta > 0$ there is a $t > 0$ such that
\[\left(\sum\limits_{j = 1}^{\infty} \norm{[B,M_{\varphi_{j,t}^{1/p}}]M_{\psi_{j,t}}f}^p\right)^{1/p} \leq \delta\norm{B} \leq \delta\norm{A}.\]
We thus choose $\delta = \frac{\epsilon}{2\norm{A}}$ and obtain
\[\left(\sum\limits_{j = 1}^{\infty} \norm{BM_{\varphi_{j,t}^{1/p}}f}^p\right)^{1/p} \geq  \|Bf\| - \frac{\epsilon}{2} \geq \|B|_F\| - \epsilon = \left(\|B|_F\| - \epsilon\right)\left(\sum\limits_{j = 1}^{\infty} \norm{M_{\varphi_{j,t}^{1/p}}f}^p\right)^{1/p}.\]
This implies, in particular, that there exists a $j \in \N$ such that
\[\norm{BM_{\varphi_{j,t}^{1/p}}f} \geq \left(\|B|_F\| - \epsilon\right)\norm{M_{\varphi_{j,t}^{1/p}}f}\]
for sufficiently small $t$. Since $\supp \left(M_{\varphi_{j,t}^{1/p}}f\right) \subseteq \supp \varphi_{j,t} \subseteq D(w,r_t)$ for some $w \in \Omega$ by definition, this implies $\vertiiis{B|_F}_t \geq \|B|_F\| - \epsilon$ for all $B \in \{A\} \cup \set{A_x : x \in \beta\Omega \setminus \Omega}$. As $t$ is chosen independently of $F$ (as it is chosen independently of $f$) and $B$, the assertion follows for band operators $A$.

For general band dominated operators the result follows by approximation. Just observe that
\begin{align*}
\vertiiis{(A-A_n)|_F}_t &\leq \|(A-A_n)|_F\| \leq \norm{A-A_n} \quad \text{and}\\
\vertiiis{(A_x-(A_n)_x)|_F}_t &\leq \|(A_x-(A_n)_x)|_F\| \leq \norm{A-A_n}.\qedhere
\end{align*}
\end{proof}

The next theorem now shows that $\sup\limits_{x \in \beta\Omega \setminus \Omega} \norm{A_x}$ is equivalent to $\norm{A + \Kc(A^p_{\nu})}$, the quotient norm of $A + \Kc(A^p_{\nu}) \in \Lc(A^p_{\nu}) / \Kc(A^p_{\nu})$, for all band-dominated operators.

\begin{thm} \label{thm5}
Let $A \in \Lc(A^p_{\nu})$ be band-dominated. Then
\[\frac{1}{\norm{P_{\nu}}}\norm{A + \Kc(A^p_{\nu})} \leq \sup\limits_{x \in \beta\Omega \setminus \Omega} \norm{A_x} \leq \norm{A + \Kc(A^p_{\nu})}.\]
In particular, $K \in \Lc(A^p_{\nu})$ is compact if and only if $K$ is band-dominated and $K_x = 0$ for all $x \in \beta\Omega \setminus \Omega$.
\end{thm}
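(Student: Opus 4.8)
The plan is to prove the two inequalities separately, with the lower bound being the substantive part. The upper bound $\sup_{x} \norm{A_x} \leq \norm{A + \Kc(A^p_{\nu})}$ should follow quickly: for any compact $K$, Proposition \ref{compact_limit_operators} gives $K_x = 0$ for all $x \in \beta\Omega \setminus \Omega$, so by linearity of the limit operator map (Corollary \ref{limit_operator_properties2}) we have $A_x = (A+K)_x$, and hence $\norm{A_x} = \norm{(A+K)_x} \leq \norm{A+K}$. Taking the supremum over $x$ on the left and the infimum over compact $K$ on the right yields the claim.

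\textbf{The lower bound.} The hard direction is $\frac{1}{\norm{P_{\nu}}}\norm{A + \Kc(A^p_{\nu})} \leq \sup_{x \in \beta\Omega \setminus \Omega} \norm{A_x}$. First I would extend $A$ to the band-dominated operator $AP_\nu \in \BDO^p_\nu$ on all of $L^p_\nu$ and work there, since the localized seminorms $\vertiiis{\cdot|_F}_t$ and Proposition \ref{prop9} are stated for $\Lc(L^p_\nu)$. The strategy is to produce, for any $\delta > 0$, a unit vector $f$ supported near the boundary on which $A$ (essentially) attains its essential norm, and then to recognize the local action of $A$ there as governed by some limit operator $A_x$. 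Concretely, using Proposition \ref{prop9} pick $t$ so that $\vertiiis{A|_F}_t \geq \norm{A|_F} - \delta$ simultaneously for $A$ and all its limit operators. The quotient norm $\norm{A + \Kc}$ can be computed as $\lim_{R \to \infty} \norm{A|_{\Omega \setminus D(0,R)}}$, because multiplication by $\chi_{D(0,R)}$ composed with $P_\nu$ is compact (cited as \cite[Proposition 15]{Hagger}), so mass concentrated on a hyperbolic ball contributes only a compact perturbation. Thus one finds unit vectors $f_k$, supported in hyperbolic balls $D(w_k, r_t)$ with $w_k \to \partial\Omega$, on which $\norm{A f_k}$ is close to $\norm{A + \Kc}$.

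\textbf{Transporting to a limit operator.} The key maneuver is then to apply the isometry $U_{w_k}^p$, which recenters the ball $D(w_k, r_t)$ at the origin: set $g_k := U_{w_k}^p f_k$, a unit vector supported in the fixed ball $D(0, r_t)$. Because $U^p_{w_k}$ is an isometry with $(U^p_{w_k})^2 = I$, we have $\norm{A f_k} = \norm{U^p_{w_k} A U^p_{w_k} g_k} = \norm{\Psi_A(w_k) g_k}$. Passing to a subnet with $w_k \to x \in \beta\Omega \setminus \Omega$, Proposition \ref{prop6} gives strong convergence $\Psi_A(w_k)|_{A^p_\nu} \to A_x$; combined with the localization (all $g_k$ live in the fixed compact region $D(0,r_t)$, where strong convergence can be upgraded to uniform control via Proposition \ref{prop9} applied to $A_x$), this forces $\norm{A_x} \gtrsim \norm{A + \Kc} - O(\delta)$. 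The factor $\norm{P_\nu}$ enters when restricting the $L^p_\nu$-estimate back to $A^p_\nu$, since the test vectors $f_k$ need to be holomorphic and one must project using $P_\nu$. Letting $\delta \to 0$ then yields the lower bound. The main obstacle is precisely this transfer step: ensuring that the strong (not norm) convergence of $\Psi_A(w_k)$ to $A_x$ suffices, which is exactly why the uniform seminorm control of Proposition \ref{prop9} across the whole family $\{A\} \cup \{A_x\}$ is indispensable.

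\textbf{The compactness characterization.} Finally, the stated equivalence is an immediate corollary. If $K$ is band-dominated with $K_x = 0$ for all $x \in \beta\Omega \setminus \Omega$, the lower bound gives $\norm{K + \Kc(A^p_\nu)} \leq \norm{P_\nu} \sup_x \norm{K_x} = 0$, so $K \in \Kc(A^p_\nu)$. The converse is exactly Proposition \ref{compact_limit_operators}.
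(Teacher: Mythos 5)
Your upper bound is exactly the paper's argument, and the skeleton of your lower bound --- localize the essential norm of $AP_{\nu}$ to balls $D(w_k,r_t)$ tending to the boundary via Proposition \ref{prop9} and compactness of $P_{\nu}M_{\chi_{D(0,s)}}$, recenter by the isometries, pass to a subnet converging to some $x \in \beta\Omega \setminus \Omega$ --- is also the paper's. The genuine gap is the transfer step, which you yourself flag as ``the main obstacle'' and then resolve with an invalid mechanism. From $\norm{U^p_{w_k}(AP_{\nu})U^p_{w_k}g_k} \geq c$ with \emph{varying} unit vectors $g_k$ supported in $D(0,r_t)$, together with strong convergence of the recentered operators, you cannot conclude that the strong limit has norm $\geq c$: strong operator convergence does not transfer norm lower bounds along varying test vectors. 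Concretely, if $(h_k)$ is a weakly null normalized sequence supported in $D(0,r_t)$ and $(\lambda_k)$ are bounded functionals with $\lambda_k(h_k)=1$ and $\lambda_k \to 0$ in the weak-$*$ sense, then $B_k := \lambda_k(\cdot)\,h_k \to 0$ strongly although $\norm{B_kh_k}=1$; the fixed support region does not help, since the unit vectors supported in $D(0,r_t)$ are not a relatively compact subset of $L^p_{\nu}$. Proposition \ref{prop9} cannot repair this: it compares $\vertiiis{B|_F}_t$ with $\norm{B|_F}$ for each \emph{fixed} operator $B \in \set{A}\cup\set{A_x : x \in \beta\Omega\setminus\Omega}$, and says nothing about passing norm bounds to strong limits. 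What is actually needed is the fact that a bounded, strongly convergent net composed on the right with a \emph{fixed compact operator} converges in norm; the paper applies this with the compact operator $P_{\nu}M_{\chi_{D(0,r_t)}}$.

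This forces a second correction that your sketch also misses: to use that fact one must factor the recentered localized operator as $\bigl(\text{a net of operators on } A^p_{\nu} \text{ covered by Proposition \ref{prop6}}\bigr)\circ\bigl(P_{\nu}M_{\chi_{D(0,r_t)}}\bigr)$, and $U^p_{w_k}AP_{\nu}U^p_{w_k}M_{\chi_{D(0,r_t)}}$ does \emph{not} factor this way, because $P_{\nu}$ does not commute with $U^p_{w_k}$ unless $p=2$ (Proposition \ref{interchange_prop} concerns $P_{\alpha}$ with $\alpha = (\frac{2}{p}-1)g+\frac{2\nu}{p}$, which equals $\nu$ only for $p=2$). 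The paper circumvents this by recentering on the right with $(U^q_{w_k}|_{A^q_{\nu}})^*$ instead of $U^p_{w_k}$, using $P_{\nu}(U^q_{w_k})^* = P_{\nu}(U^q_{w_k})^*P_{\nu}$, which rewrites the localized operator as $\bigl(U^p_{w_k}AU^p_{w_k}T_{b_{w_k}}^{-1}\bigr)P_{\nu}M_{\chi_{D(0,r_t)}}$: the bracket converges strongly on $A^p_{\nu}$ by Propositions \ref{prop6} and \ref{prop_T_b_x}, and the second factor is a fixed compact operator, so the norms converge and the contradiction argument closes. This is also the precise source of the constant: one obtains $\norm{A+\Kc(A^p_{\nu})} \leq \sup_{x}\norm{A_xT_{b_x}^{-1}P_{\nu}}$ and then uses $\norm{T_{b_x}^{-1}P_{\nu}} \leq \norm{P_{\nu}}$. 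Your heuristic that $\norm{P_{\nu}}$ enters ``because one must project the test vectors'' gestures at this, but without the $T_{b_x}$-correction the argument does not go through for $p \neq 2$. The final compactness equivalence you state is fine once the two-sided estimate is established.
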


\begin{proof}
Let $x \in \beta\Omega \setminus \Omega$, $K \in \Kc(A^p_{\nu})$ and choose a net $(z_{\gamma})$ in $\Omega$ that converges to $x$. As $K$ is compact, we get $K_x = 0$ by Proposition \ref{compact_limit_operators}. Corollary \ref{limit_operator_properties2} thus implies
\[\norm{A_x} = \norm{A_x+K_x} = \norm{(A+K)_x} \leq \norm{A+K}.\]
As this is true for all $K \in \Kc(A^p_{\nu})$ and $x \in \beta\Omega \setminus \Omega$, the second inequality follows.

For the first inequality we observe that
\[\norm{AP_{\nu} + K} = \sup\limits_{\norm{f} = 1} \norm{(AP_{\nu} + K)f} \geq \sup\limits_{\substack{f \in A^p_{\nu},\\ \norm{f} = 1}} \norm{(AP_{\nu} + K)f} = \sup\limits_{\substack{f \in A^p_{\nu},\\ \norm{f} = 1}} \norm{(A + K)f} = \norm{A+K|_{A^p_{\nu}}}\]
for all compact operators $K \from L^p_{\nu} \to A^p_{\nu}$. Thus
\[\norm{A + \Kc(A^p_{\nu})} \leq \inf\limits_{K \in \Kc(L^p_{\nu},A^p_{\nu})}\norm{AP_{\nu} + K}.\]
We will now show
\begin{equation} \label{norm_inequality}
\inf\limits_{K \in \Kc(L^p_{\nu},A^p_{\nu})}\norm{AP_{\nu} + K} \leq \sup\limits_{x \in \beta\Omega \setminus \Omega} \norm{A_xT_{b_x}^{-1}P_{\nu}}.
\end{equation}
This will imply the desired inequality since
\[\norm{T_{b_x}^{-1}P_{\nu}} \leq \sup\limits_{z \in \Omega} \norm{U_z^pP_{\nu}(U_z^q)^*P_{\nu}} = \sup\limits_{z \in \Omega} \norm{U_z^p(P_{\nu}U_z^qP_{\nu})^*} = \sup\limits_{z \in \Omega} \norm{U_z^p(U_z^qP_{\nu})^*} \leq \norm{P_{\nu}},\]
where we used that $U_z^p$ and $U_z^q$ are isometries and $U_z^q(A^q_{\nu}) \subseteq A^q_{\nu}$. So assume that \eqref{norm_inequality} is violated, i.e.~that there is an $\epsilon > 0$ such that
\[\inf\limits_{K \in \Kc(L^p_{\nu},A^p_{\nu})}\norm{AP_{\nu} + K} > \sup\limits_{x \in \beta\Omega \setminus \Omega} \norm{A_xT_{b_x}^{-1}P_{\nu}} + \epsilon.\]
In particular,
\[\norm{AP_{\nu}|_{\Omega \setminus D(0,s)}} = \norm{AP_{\nu}M_{1-\chi_{D(0,s)}}} = \norm{AP_{\nu} - AP_{\nu}M_{\chi_{D(0,s)}}} > \sup\limits_{x \in \beta\Omega \setminus \Omega} \norm{A_xT_{b_x}^{-1}P_{\nu}} + \epsilon\]
for all $s > 0$ since $P_{\nu}M_{\chi_{D(0,s)}} \in \Kc(L^p_{\nu},A^p_{\nu})$ (see e.g.~\cite[Proposition 15]{Hagger}). Now, by Proposition \ref{prop9}, there is a $t \in (0,1)$ such that for all $s > 0$ we have
\[\vertiii{AP_{\nu}|_{\Omega \setminus D(0,s)}}_t \geq \norm{AP_{\nu}|_{\Omega \setminus D(0,s)}} - \frac{\epsilon}{2} > \sup\limits_{x \in \beta\Omega \setminus \Omega} \norm{A_xT_{b_x}^{-1}P_{\nu}} + \frac{\epsilon}{2}.\]
In particular, for every $s > 0$ we get a $w_s \in \Omega$ such that
\[\norm{AP_{\nu}M_{\chi_{D(w_s,r_t)}}} \geq \norm{AP_{\nu}M_{\chi_{D(w_s,r_t) \setminus D(0,s)}}} > \sup\limits_{x \in \beta\Omega \setminus \Omega} \norm{A_xT_{b_x}^{-1}P_{\nu}} + \frac{\epsilon}{2}.\]
It is clear that $w_s \to \partial\Omega$ as $s \to \infty$ (otherwise we would get $0$ at some point in the middle term). Moreover,
\begin{align*}
\norm{U_{w_s}^pA(U_{w_s}^q|_{A^q_{\nu}})^*P_{\nu}M_{\chi_{D(0,r_t)}}} &= \norm{U_{w_s}^pA(P_{\nu}U_{w_s}^qP_{\nu})^*M_{\chi_{D(0,r_t)}}} = \norm{U_{w_s}^pA(U_{w_s}^qP_{\nu})^*M_{\chi_{D(0,r_t)}}}\\
&= \norm{AP_{\nu}(U_{w_s}^q)^*M_{\chi_{D(0,r_t)}}(U_{w_s}^q)^*} =  \norm{AP_{\nu}M_{\chi_{D(w_s,r_t)}}}\\
&> \sup\limits_{x \in \beta\Omega \setminus \Omega} \norm{A_xT_{b_x}^{-1}P_{\nu}} + \frac{\epsilon}{2},
\end{align*}
where we used the fact that both $U_{w_s}^p$ and $U_{w_s}^q$ are surjective isometries, $U_z^q(A^q_{\nu}) \subseteq A^q_{\nu}$ and $(U_{w_s}^q)^*M_{\chi_{D(0,r_t)}}(U_{w_s}^q)^* = M_{\chi_{D(w_s,r_t)}}$ (cf.~Proposition \ref{multiplication_shift}).

As $\beta\Omega$ is compact, $(w_s)$ has a convergent subnet, again denoted by $(w_s)$, converging to some $y \in \beta\Omega \setminus \Omega$. Proposition \ref{prop6} and Proposition \ref{prop_T_b_x} imply $U_{w_s}^pAU_{w_s}^p|_{A^p_{\nu}} \to A_y$ and $T_{b_{w_s}}^{-1} \to T_{b_y}^{-1}$ strongly and hence
\[\norm{U_{w_s}^pA(U_{w_s}^q|_{A^q_{\nu}})^*P_{\nu}M_{\chi_{D(0,r_t)}}} = \norm{U_{w_s}^pAU_{w_s}^pT_{b_{w_s}}^{-1}P_{\nu}M_{\chi_{D(0,r_t)}}} \to \norm{A_yT_{b_y}^{-1}P_{\nu}M_{\chi_{D(0,r_t)}}}\]
since $P_{\nu}M_{\chi_{D(0,r_t)}}$ is compact. This yields
\[\norm{A_yT_{b_y}^{-1}P_{\nu}M_{\chi_{D(0,r_t)}}} \geq \sup\limits_{x \in \beta\Omega \setminus \Omega} \norm{A_xT_{b_x}^{-1}P_{\nu}} + \frac{\epsilon}{2},\]
which is certainly a contradiction. Thus $\inf\limits_{K \in \Kc(L^p_{\nu},A^p_{\nu})}\norm{AP_{\nu} + K} \leq \sup\limits_{x \in \beta\Omega \setminus \Omega} \norm{A_xT_{b_x}^{-1}P_{\nu}}$ and the theorem follows as mentioned above.
\end{proof}

In \cite{MiSuWi,Suarez} the unit ball $\B^n$ was considered and compactness was characterized in terms of the Berezin transform. Using Theorem \ref{thm5}, we can generalize this characterization to bounded symmetric domains $\Omega$. In \cite{MiWi} a similar result was obtained for what the authors call ``Bergman-type spaces'' in case $p = 2$.

For $\frac{1}{p} + \frac{1}{q} = 1$ we define
\[k_z^{(p)}(w) := \frac{h(z,z)^{\frac{\nu+g}{q}}}{h(w,z)^{\nu+g}}.\]
For $p = 2$ this function is called the normalized reproducing kernel. A quick computation using the Rudin-Forelli estimates \cite[Proposition 8]{Englis} shows that $k_z^{(p)}$ is contained in $A^p_{\nu}$ and that $C_p := \sup\limits_{z \in \Omega} \|k_z^{(p)}\|$ is finite if $(\nu,\nu,p)$ is admissible, which, as already mentioned a few times, is assumed throughout this paper (see Section \ref{BSDs}). As $(\nu,\nu,p)$ is admissible if and only if  $(\nu,\nu,q)$ is admissible, this also implies $C_q := \sup\limits_{z \in \Omega} \|k_z^{(q)}\| < \infty$ (see Equation \ref{admissible2}). We may thus define the Berezin transform $\Bc(A) \from \Omega \to \C$ of an operator $A \in \Lc(A^p_{\nu})$ as
\[(\Bc(A))(z) := \int_{\Omega} (Ak_z^{(p)})(w)\overline{k_z^{(q)}(w)} \, \mathrm{d}v_{\nu}(w).\]
Using H\"older's inequality, it is not difficult to see that $\Bc(A)$ is bounded and uniformly continuous with respect to the Bergman metric. Moreover, we have $\Bc(A) = 0$ if and only if $A = 0$ by standard arguments (see e.g.~\cite[Section 2]{Stroethoff} or \cite[Section 7.2]{Zhu95}).

\renewcommand*{\themthm}{A}
\begin{mthm} \label{thm6}
An operator $K \in \Lc(A^p_{\nu})$ is compact if and only if $K$ is band-dominated and
\[\lim\limits_{z \to \partial\Omega} (\Bc(K))(z) = 0.\]
\end{mthm}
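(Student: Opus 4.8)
The plan is to derive Theorem~\ref{thm6} from the essential-norm estimate of Theorem~\ref{thm5} together with the injectivity of the Berezin transform. The easy implication is direct: if $K$ is compact, then $K$ is band-dominated by Proposition~\ref{prop0}$(iv)$, and since the normalized kernels $k_z^{(p)}$ tend weakly to $0$ as $z\to\partial\Omega$ (for fixed $g\in A^q_{\nu}$ one has $\scpr{k_z^{(p)}}{g}=h(z,z)^{\frac{\nu+g}{q}}\overline{g(z)}\to 0$ by a density argument using that $h(z,z)\to 0$ and $\abs{g(z)}\le C_p\,h(z,z)^{-\frac{\nu+g}{q}}\norm{g}$), compactness forces $\norm{Kk_z^{(p)}}\to 0$, whence $\abs{(\Bc(K))(z)}\le C_q\norm{Kk_z^{(p)}}\to 0$. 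For the converse I would fix a band-dominated $K$ with $(\Bc(K))(z)\to 0$ and, by Theorem~\ref{thm5}, reduce the claim ``$K$ is compact'' to ``$K_x=0$ for all $x\in\beta\Omega\setminus\Omega$''. Since $\Bc$ is injective on $\Lc(A^p_{\nu})$, it is enough to show $\Bc(K_x)\equiv 0$, i.e.\ $(\Bc(K_x))(z_0)=0$ for every $z_0\in\Omega$.

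The core step is a covariance relation. Using the reproducing identity $\scpr{F}{k_{z_0}^{(q)}}=h(z_0,z_0)^{\frac{\nu+g}{p}}F(z_0)$ together with the transport law $h(\phi_w(x),\phi_w(y))=\tfrac{h(w,w)h(x,y)}{h(x,w)h(w,y)}$, one computes that $U_w^p k_{z_0}^{(p)}$ is a scalar multiple of $h(\,\cdot\,,w)^{\frac{(\nu+g)(p-2)}{p}}\,k_{\phi_w(z_0)}^{(p)}$. Writing this multiplier as its value at $\phi_w(z_0)$ plus a remainder that \emph{vanishes} at $\phi_w(z_0)$, all the scalar prefactors collapse exactly to $1$, and one is left with the clean identity
\[
\scpr{U_w^p K U_w^p\,k_{z_0}^{(p)}}{k_{z_0}^{(q)}}
=(\Bc(K))(\phi_w(z_0))+E_w(z_0),
\]
where, because the remainder vanishes at $\phi_w(z_0)$, the error reduces to a commutator,
\[
E_w(z_0)=\gamma_w\,\bigl([K,M_{h(\cdot,w)^{(\nu+g)(p-2)/p}}]\,k_{\phi_w(z_0)}^{(p)}\bigr)(\phi_w(z_0)),
\]
with $\gamma_w$ a bounded factor. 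By Proposition~\ref{prop6} the left-hand side converges to $(\Bc(K_x))(z_0)$ as $w\to x$; moreover $\beta(\phi_w(z_0),w)=\beta(z_0,0)$ is constant, so $\phi_w(z_0)\to\partial\Omega$ and hence the main term $(\Bc(K))(\phi_w(z_0))\to 0$. For $p=2$ the multiplier is constant, so $E_w\equiv 0$ and the argument closes immediately, recovering the familiar unitary covariance $\Bc(U_wKU_w)=\Bc(K)\circ\phi_w$.

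The hard part, and the genuine obstacle for $p\neq 2$, is to show $E_w(z_0)\to 0$. This is precisely where band-dominatedness must be used: the commutator above is taken with the slowly varying multiplier $h(\,\cdot\,,w)^{\frac{(\nu+g)(p-2)}{p}}$, so after normalizing by its value at $\phi_w(z_0)$ I expect to invoke the uniform commutator estimates of Lemma~\ref{lem3} and Lemma~\ref{lem4}, checking that the associated family of multipliers satisfies the separation/slow-variation hypothesis there, so that the commutator, evaluated against the concentrated kernel $k_{\phi_w(z_0)}^{(p)}$, becomes small along the net $w\to x$. The delicate point is that neither the kernel concentration nor the multiplier stays bounded uniformly across all $p$; I would therefore combine this with a duality reduction, using $\Bc(K^*)=\overline{\Bc(K)}$, the fact that $K$ is compact iff $K^*$ is, and the intertwiners $T_{b_x}$ of Proposition~\ref{prop_T_b_x} and Corollary~\ref{cor_adjoints}, to pass freely between the exponents $p$ and $q$ and treat only the favorable range directly. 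Once $E_w\to 0$ is established, $(\Bc(K_x))(z_0)=0$ follows for every $z_0$, whence $K_x=0$ for all $x\in\beta\Omega\setminus\Omega$ and Theorem~\ref{thm5} yields compactness of $K$.
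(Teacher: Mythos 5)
Your forward implication and your overall skeleton (reduce compactness to $K_x=0$ for all $x\in\beta\Omega\setminus\Omega$ via Theorem \ref{thm5}, then conclude via injectivity of $\Bc$) are sound and agree with the paper. The gap is in the core covariance step. Write $K_w:=U_w^pKU_w^p|_{A^p_{\nu}}$ and $\zeta_w:=\phi_w(z_0)$, so that your identity reads $(\Bc(K_w))(z_0)=(\Bc(K))(\zeta_w)+E_w(z_0)$. First, the tool you propose for killing $E_w(z_0)$ cannot apply: the multiplier $y\mapsto h(y,w)^{s}$, $s=(\nu+g)(1-\frac{2}{p})$, normalized at $\zeta_w$, is \emph{not} slowly varying in the Bergman metric, so the separation hypothesis of Lemmas \ref{lem3} and \ref{lem4} fails. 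Already on the unit disk with $z_0=0$ (so $\zeta_w=w$ and $h(y,w)=1-y\overline{w}$) one computes, at the point $y=\phi_w(u)$,
\begin{equation*}
\left(\frac{1-\phi_w(u)\overline{w}}{1-\abs{w}^2}\right)^{s}=(1-\overline{w}u)^{-s},
\end{equation*}
which for fixed $u\neq 0$ stays bounded away from $1$ as $w\to\partial\D$ while $\beta(\zeta_w,\phi_w(u))=\beta(0,u)$ stays fixed; thus preimages of two separated value sets remain at bounded Bergman distance uniformly as $w\to\partial\Omega$ (and for $p>2$ the normalized multiplier's sup-norm even blows up like $h(w,w)^{-s}$). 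Second, and more fundamentally, the step ``$E_w(z_0)\to 0$'' is not a reduction but a restatement of the theorem: by Proposition \ref{prop6}, $(\Bc(K_{z_\gamma}))(z_0)\to(\Bc(K_x))(z_0)$ along any net $z_\gamma\to x$, while $(\Bc(K))(\phi_{z_\gamma}(z_0))\to 0$ by hypothesis, so your own identity forces $E_{z_\gamma}(z_0)\to(\Bc(K_x))(z_0)$ --- exactly the quantity you are trying to prove is zero. Any proof that the error vanishes therefore already contains the conclusion; the proposed decomposition defers the difficulty rather than resolving it, and passing to adjoints does not help, since $K^*$ produces the same multiplier with exponent $-s$.

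The paper's way around this (this is precisely the point of Remark \ref{rem_limit_operator_definition}) is to never conjugate by $U_w^p$ on both sides: it works with $K_wT_{b_w}^{-1}=U_w^pK(U_w^q|_{A^q_{\nu}})^*$ instead. The computation in the paper's proof shows that $(U_w^q|_{A^q_{\nu}})^*$ maps $k_{\zeta}^{(p)}$ to a \emph{unimodular} multiple of $k_{\phi_w(\zeta)}^{(p)}$ (and $(U_w^p|_{A^p_{\nu}})^*$ does the same for $k_{\zeta}^{(q)}$), so no extraneous multiplier ever appears and one obtains the exact covariance \eqref{Berezin_transform_shifted_operator}, namely $(\Bc(K_wT_{b_w}^{-1}))(\zeta)=b_w(\zeta)^{-1}(\Bc(K))(\phi_w(\zeta))$ with $\abs{b_w(\zeta)}=1$ and no error term. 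Taking the limit along $z_\gamma\to x$ (using Propositions \ref{prop6} and \ref{prop_T_b_x}) gives $\Bc(K_xT_{b_x}^{-1})=0$, hence $K_xT_{b_x}^{-1}=0$ by injectivity of $\Bc$, and $K_x=0$ since $T_{b_x}$ is invertible. If you replace your conjugation $U_w^pKU_w^p$ by this adjoint-twisted one, the rest of your outline goes through unchanged.
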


\begin{proof}
We will show that $K_x = 0$ for all $x \in \beta\Omega \setminus \Omega$ if and only if $\lim\limits_{z \to \partial\Omega} (\Bc(K))(z) = 0$. The result then follows by Proposition \ref{compact_limit_operators} and Theorem \ref{thm5}.

Choose a net $(z_{\gamma})$ in $\Omega$ that converges to some $x \in \beta\Omega \setminus \Omega$ and let $K_x = 0$. Consider the functions $f_z \in A^p_{\nu}$ defined by $f_z(w) := h(w,z)^{(\nu+g)(1-\frac{2}{p})}$ for $w,z \in \Omega$. It holds
\[(U_z^pf_z)(w) = h(\phi_z(w),z)^{(\nu+g)(1-\frac{2}{p})} \frac{h(z,z)^{\frac{\nu+g}{p}}}{h(w,z)^{\frac{2(\nu+g)}{p}}} = \frac{h(z,z)^{(\nu+g)(1-\frac{2}{p})}}{h(w,z)^{(\nu+g)(1-\frac{2}{p})}} \frac{h(z,z)^{\frac{\nu+g}{p}}}{h(w,z)^{\frac{2(\nu+g)}{p}}} = \frac{h(z,z)^{\frac{\nu+g}{q}}}{h(w,z)^{\nu+g}},\]
i.e.~$U_z^pf_z = k_z^{(p)}$. In particular, we have $\sup\limits_{z \in \Omega} \|f_z\| \leq C_p$ because $U_z^p$ is an isometry for all $z \in \Omega$. Moreover,
\[(\Bc(K))(z) \leq \|Kk_z^{(p)}\|_p\|k_z^{(q)}\|_q = \|U_z^pKU_z^pf_z\|_p\|k_z^{(q)}\|_q \leq C_q\|U_z^pKU_z^pf_z\|_p.\]
As in the proof of Proposition \ref{prop_T_b_x}, $(z_{\gamma})$ converges to some $\alpha \in \partial\Omega$ in the Euclidean topology. Using that $h(w,z)$ is a polynomial in $w$ and $\overline{z}$ and $\abs{h(w,z)} > 0$ on $\Omega \times \overline{\Omega}$, we get that $f_{z_{\gamma}}$ converges uniformly on compact sets to a bounded function $f_{\alpha}$. In particular, $f_{z_{\gamma}} \to f_{\alpha}$ in $A^p_{\nu}$ and $f_{\alpha} \in A^p_{\nu}$. But this implies
\[\|U_{z_{\gamma}}^pKU_{z_{\gamma}}^pf_{z_{\gamma}}\|_p \leq \|U_{z_{\gamma}}^pKU_{z_{\gamma}}^pf_{\alpha}\|_p + \|U_{z_{\gamma}}^pKU_{z_{\gamma}}^p(f_{z_{\gamma}} - f_{\alpha})\|_p \leq \|K_{z_{\gamma}}f_{\alpha}\|_p + \norm{K}\|f_{z_{\gamma}} - f_{\alpha}\|_p,\]
which converges to $0$ by assumption. Thus $\lim\limits_{z_{\gamma} \to x} (\Bc(K))(z_{\gamma}) = 0$. As the net $(z_{\gamma})$ was arbitrary, we get $\lim\limits_{z \to \partial\Omega} (\Bc(K))(z) = 0$.

Conversely, let $x \in \beta\Omega \setminus \Omega$ and assume that $\lim\limits_{z \to \partial\Omega} (\Bc(K))(z) = 0$. Choose a net $(z_{\gamma})$ in $\Omega$ that converges to $x$. By Proposition \ref{prop6},
\[K_x = \slim\limits_{z_{\gamma} \to x} U_{z_{\gamma}}^pKU_{z_{\gamma}}^p|_{A^p_{\nu}}.\]
But let us consider
\[K_xT_{b_x}^{-1} = \slim\limits_{z_{\gamma} \to x} K_{z_{\gamma}}T_{b_{z_{\gamma}}}^{-1} = \slim\limits_{z_{\gamma} \to x} U_{z_{\gamma}}^pK(U_{z_{\gamma}}^q|_{A^q_{\nu}})^*\]
here instead (cf.~Remark \ref{rem_limit_operator_definition}, Proposition \ref{prop_T_b_x}). Of course, $K_x = 0$ if and only if $K_xT_{b_x}^{-1} = 0$, so it suffices to show $\slim\limits_{z_{\gamma} \to x} U_{z_{\gamma}}^pK(U_{z_{\gamma}}^q|_{A^q_{\nu}})^* = 0$. The reason why we want to consider this limit instead is the following computation:
\begin{align*}
\left((U_z^q|_{A^q_{\nu}})^*k_{\zeta}^{(p)}\right)(w) &= (T_{b_z}U_z^pk_{\zeta}^{(p)})(w)\\
&= \int_{\Omega} \frac{h(\zeta,\zeta)^{\frac{\nu+g}{q}}}{h(\phi_z(y),\zeta)^{\nu+g}} \frac{h(z,z)^{\frac{\nu+g}{p}}}{h(y,z)^{\frac{2(\nu+g)}{p}}} \frac{h(z,y)^{(\nu+g)(\frac{1}{q} - \frac{1}{p})}}{h(y,z)^{(\nu+g)(\frac{1}{q} - \frac{1}{p})}} h(w,y)^{-\nu-g} \, \mathrm{d}v_{\nu}(y)\\
&= h(\zeta,\zeta)^{\frac{\nu+g}{q}}h(z,z)^{\frac{\nu+g}{p}} \int_{\Omega} \frac{h(y,z)^{\nu+g}}{h(z,\zeta)^{\nu+g}h(y,\phi_z(\zeta))^{\nu+g}} \frac{h(z,y)^{(\nu+g)(\frac{1}{q} - \frac{1}{p})}}{h(y,z)^{\nu+g}}\\
&\qquad \qquad \qquad \qquad \qquad \qquad \cdot h(w,y)^{-\nu-g} \, \mathrm{d}v_{\nu}(y)\\
&= \frac{h(\zeta,\zeta)^{\frac{\nu+g}{q}}h(z,z)^{\frac{\nu+g}{p}}}{h(z,\zeta)^{\nu+g}} \overline{\int_{\Omega} \frac{h(y,z)^{(\nu+g)(\frac{1}{q} - \frac{1}{p})}}{h(y,w)^{\nu+g}} h(\phi_z(\zeta),y)^{-\nu-g} \, \mathrm{d}v_{\nu}(y)}\\
&= \frac{h(\zeta,\zeta)^{\frac{\nu+g}{q}}h(z,z)^{\frac{\nu+g}{p}}}{h(z,\zeta)^{\nu+g}} \frac{h(z,\phi_z(\zeta))^{(\nu+g)(\frac{1}{q} - \frac{1}{p})}}{h(w,\phi_z(\zeta))^{\nu+g}}\\
&= \frac{h(\zeta,\zeta)^{\frac{\nu+g}{q}}h(z,z)^{\frac{\nu+g}{q}}}{h(z,\zeta)^{\frac{2(\nu+g)}{q}}h(w,\phi_z(\zeta))^{\nu+g}}\\
&= \frac{h(\zeta,z)^{\frac{(\nu+g)}{q}}}{h(z,\zeta)^{\frac{(\nu+g)}{q}}} \frac{h(\phi_z(\zeta),\phi_z(\zeta))^{\frac{\nu+g}{q}}}{h(w,\phi_z(\zeta))^{\nu+g}}\\
&= \frac{h(\zeta,z)^{\frac{(\nu+g)}{q}}}{h(z,\zeta)^{\frac{(\nu+g)}{q}}} k_{\phi_z(\zeta)}^{(p)}(w)
\end{align*}
for all $w,z,\zeta \in \Omega$, where we used the usual transformation identities a few times and the fact that $P_{\nu}$ is the identity on holomorphic functions. Note that the overline indicates complex conjugation here. Similarly, we get $\left((U_z^p|_{A^p_{\nu}})^*k_{\zeta}^{(q)}\right)(w) = \frac{h(\zeta,z)^{\frac{(\nu+g)}{p}}}{h(z,\zeta)^{\frac{(\nu+g)}{p}}} k_{\phi_z(\zeta)}^{(q)}(w)$ for all $w,z,\zeta \in \Omega$. Thus
\begin{align} \label{Berezin_transform_shifted_operator}
(\Bc(K_{z_{\gamma}}T_{b_{z_{\gamma}}}^{-1}))(\zeta) &= \int_{\Omega} \left(U_{z_{\gamma}}^pK(U_{z_{\gamma}}^q|_{A^q_{\nu}})^*k_{\zeta}^{(p)}\right)(w)\overline{k_{\zeta}^{(q)}(w)} \, \mathrm{d}v_{\nu}(w)\notag\\
&= \frac{h(\zeta,z_{\gamma})^{\frac{(\nu+g)}{q}}}{h(z_{\gamma},\zeta)^{\frac{(\nu+g)}{q}}}\frac{h(z_{\gamma},\zeta)^{\frac{(\nu+g)}{p}}}{h(\zeta,z_{\gamma})^{\frac{(\nu+g)}{p}}} \int_{\Omega} (Kk_{\phi_{z_{\gamma}}(\zeta)}^{(p)})(w)\overline{k_{\phi_{z_{\gamma}}(\zeta)}^{(q)}(w)} \, \mathrm{d}v_{\nu}(w)\notag\\
&= b_{z_{\gamma}}(\zeta)^{-1}(\Bc(K))(\phi_{z_{\gamma}}(\zeta)).
\end{align}
As $\abs{b_{z_{\gamma}}(\zeta)^{-1}} = 1$, $\phi_{z_{\gamma}}(\zeta) \to \partial\Omega$ and $K_{z_{\gamma}}T_{b_{z_{\gamma}}}^{-1} \to K_xT_{b_x}^{-1}$ strongly as $z_{\gamma} \to x$, we get
\[(\Bc(K_xT_{b_x}^{-1}))(\zeta) = \lim\limits_{z_{\gamma} \to x} (\Bc(K_{z_{\gamma}}T_{b_{z_{\gamma}}}^{-1}))(\zeta) = \lim\limits_{z_{\gamma} \to x} b_{z_{\gamma}}(\zeta)^{-1}(\Bc(K))(\phi_{z_{\gamma}}(\zeta)) = 0\]
for all $\zeta \in \Omega$. Hence $K_xT_{b_x}^{-1} = 0$ and thus $K_x = 0$.
\end{proof}

For $p = 2$ things are a little bit simpler because $\Tf_{2,\nu}$ is an irreducible $C^*$-algebra containing a non-trivial compact operator and hence contains all compact operators. As Toeplitz operators are band-dominated (Proposition \ref{Projections_band_dominated}, see also Remark \ref{rem_Toeplitz}) the next corollary immediately follows.

\begin{cor} \label{cor2}
An operator $K \in \Lc(A^2_{\nu})$ is compact if and only if $K \in \Tf_{2,\nu}$ and $\lim\limits_{z \to \partial\Omega} (\Bc(K))(z) = 0$.
\end{cor}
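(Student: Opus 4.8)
The plan is to derive Corollary~\ref{cor2} almost entirely from Theorem~\ref{thm6}, so that the only genuinely new ingredient to supply is the $C^*$-algebraic inclusion $\Kc(A^2_{\nu}) \subseteq \Tf_{2,\nu}$. First I would dispose of the easy direction. Suppose $K \in \Tf_{2,\nu}$ with $\lim_{z \to \partial\Omega}(\Bc(K))(z) = 0$. By Proposition~\ref{Projections_band_dominated} (see also Remark~\ref{rem_Toeplitz}) every element of the Toeplitz algebra is band-dominated in the sense of Definition~\ref{BDO_on_A^p}; in particular $K$ is band-dominated. Since its Berezin transform also vanishes at the boundary by hypothesis, Theorem~\ref{thm6} applies verbatim and yields that $K$ is compact. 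No further analysis is needed here.

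For the converse I would start from a compact $K \in \Lc(A^2_{\nu})$ and apply Theorem~\ref{thm6}, which immediately gives that $K$ is band-dominated and that $\lim_{z \to \partial\Omega}(\Bc(K))(z) = 0$; thus the Berezin condition is free. It remains only to place $K$ inside $\Tf_{2,\nu}$, and the strategy is to invoke irreducibility. Concretely, I would first verify that $\Tf_{2,\nu}$ is an irreducible $C^*$-subalgebra of $\Lc(A^2_{\nu})$, most cleanly by showing its commutant is trivial: the constant function is a cyclic vector for the Toeplitz algebra because $T_f 1 = P_{\nu}f$ ranges over a dense subset of $A^2_{\nu}$ as $f$ runs through $L^{\infty}(\Omega)$, and the algebra is $*$-closed since $T_f^* = T_{\bar{f}}$. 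I would then exhibit a single nonzero compact operator lying in $\Tf_{2,\nu}$, for instance a nonvanishing semicommutator $T_f T_g - T_{fg}$ with $f,g$ continuous up to $\partial\Omega$, or a compact $T_f$ produced by Engli\v{s}' criterion for $p = 2$. The standard theorem that an irreducible $C^*$-algebra of operators containing one nonzero compact operator must contain the whole ideal of compacts then gives $\Kc(A^2_{\nu}) \subseteq \Tf_{2,\nu}$, and hence $K \in \Tf_{2,\nu}$.

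The main obstacle is precisely this inclusion $\Kc(A^2_{\nu}) \subseteq \Tf_{2,\nu}$: everything analytic is already packaged inside Theorem~\ref{thm6}, so the work reduces to securing irreducibility and the presence of one nonzero compact operator in the Toeplitz algebra. I would also emphasize that this route is special to $p = 2$, where $\Tf_{2,\nu}$ is a $C^*$-algebra and the irreducibility argument is available; for general $p$ the correct substitute is exactly the band-dominatedness hypothesis of Theorem~\ref{thm6}, which is why the main theorem is phrased in those terms rather than in terms of membership in $\Tf_{p,\nu}$.
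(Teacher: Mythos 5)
Your proposal follows exactly the paper's route: both directions reduce to Theorem~\ref{thm6}, and the only extra ingredient is the inclusion $\Kc(A^2_{\nu}) \subseteq \Tf_{2,\nu}$, obtained from the standard fact that an irreducible $C^*$-algebra containing one nonzero compact operator contains all of them. Your ``if'' direction (membership in $\Tf_{2,\nu}$ implies band-dominated by Proposition~\ref{Projections_band_dominated} and Remark~\ref{rem_Toeplitz}, then Theorem~\ref{thm6}) is precisely the paper's argument and is complete, as is your reduction of the ``only if'' direction to the inclusion of the compacts in the Toeplitz algebra, a fact the paper asserts without proof.

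However, your proposed verification of irreducibility has a genuine gap: possessing a cyclic vector together with being $*$-closed does \emph{not} imply that the commutant is trivial. The multiplication algebra $\set{M_f : f \in L^{\infty}[0,1]}$ on $L^2[0,1]$ is $*$-closed and has the constant function $1$ as a cyclic vector, yet its commutant is the algebra itself, so it is not irreducible; irreducibility means that \emph{every} nonzero vector is cyclic, which is strictly stronger than exhibiting one cyclic vector. To repair this, argue on the commutant directly: if $A$ commutes with all $T_f$, $f \in L^{\infty}(\Omega)$, then so does $A^*$ (the commutant of a self-adjoint set is $*$-closed, and $T_f^* = T_{\bar{f}}$), and since $T_g$ acts as multiplication by $g$ for bounded holomorphic $g$, one gets $Ag = g\,(A1)$ and $A^*g = g\,(A^*1)$ for all such $g$. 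Writing $h := A1$ and $k := A^*1$, comparing $\sp{Af}{g} = \sp{f}{A^*g}$ for polynomials $f,g$ yields $\int_{\Omega} f\bar{g}\,(h - \bar{k}) \, \mathrm{d}v_{\nu} = 0$; the products $f\bar{g}$ span a dense subspace of $C(\overline{\Omega})$ by Stone--Weierstrass, so $h = \bar{k}$ a.e., whence $h$ is simultaneously holomorphic and antiholomorphic, hence constant, and $A$ is scalar by density of polynomials in $A^2_{\nu}$. (Alternatively, irreducibility follows from the much stronger density theorem of Engli\v{s} \cite{Englis0}, which appears in the paper's bibliography presumably for this purpose.) A smaller issue: your first candidate for a nonzero compact element, a semicommutator $T_fT_g - T_{fg}$ with $f,g$ continuous up to $\partial\Omega$, cannot be taken for granted on a general bounded symmetric domain---compactness of such semicommutators depends on the boundary geometry and already fails on the polydisc, so for higher-rank $\Omega$ this would need a separate argument. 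Your second candidate is the right one and is immediate from results the paper already uses: for $f = \chi_{D(0,1)}$ the operator $T_f = P_{\nu}M_f|_{A^2_{\nu}}$ is compact because $P_{\nu}M_{\chi_{D(0,1)}}$ is compact (\cite[Proposition 15]{Hagger}), and it is nonzero since $\sp{T_f1}{1} = v_{\nu}(D(0,1)) > 0$.
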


\section{Fredholmness} \label{Fredholmness}

In the previous section we showed that compactness can be characterized in terms of limit operators. In this section we show that the same can be done with Fredholmness, i.e.~we show that a band-dominated operator is Fredholm if and only if all of its limit operators are invertible. As we gathered all the ingredients we need in the previous sections, we may follow now the lines of \cite{Hagger} to obtain the result. One direction is actually quite easy and follows directly from the compactness characterization:

\begin{prop} \label{Fredholmness_necessary_condition}
Let $A \in \Lc(A^p_{\nu})$ be band-dominated. If $A$ is Fredholm, then $A_x$ is invertible for every $x \in \beta\Omega \setminus \Omega$ and $\sup\limits_{x \in \beta\Omega \setminus \Omega} \norm{A_x^{-1}} < \infty$. Moreover, if $B$ is a Fredholm regularizer of $A$ and $(z_{\gamma})$ is a net in $\Omega$ that converges to $x \in \beta\Omega \setminus \Omega$, then $U_{z_{\gamma}}^pBU_{z_{\gamma}}^p$ converges strongly to $A_x^{-1}$ as $z_{\gamma} \to x$.
\end{prop}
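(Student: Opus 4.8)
The plan is to exploit the multiplicativity of limit operators on band-dominated operators (Corollary \ref{limit_operator_properties2}) together with the fact that compact operators have vanishing limit operators (Proposition \ref{compact_limit_operators}). Let $B \in \Lc(A^p_{\nu})$ be a Fredholm regularizer of $A$, so that $AB - I$ and $BA - I$ are compact. If I knew that $B$ were band-dominated, then applying the limit-operator map at any $x \in \beta\Omega \setminus \Omega$ and using that $I_x = I$ (which follows at once from $U_z^pP_{\nu}U_z^p|_{A^p_{\nu}} = I$ for all $z$ and Proposition \ref{prop6}) would give $A_xB_x = (AB)_x = I_x + (AB - I)_x = I$ and likewise $B_xA_x = I$. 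Hence every $A_x$ would be invertible with $A_x^{-1} = B_x$, and the uniform bound would be immediate from $\norm{A_x^{-1}} = \norm{B_x} \leq \norm{B}$. The last assertion would then follow directly from Proposition \ref{prop6}, since $U_{z_{\gamma}}^pBU_{z_{\gamma}}^p|_{A^p_{\nu}}$ converges strongly to $B_x = A_x^{-1}$.

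The crux, and the main obstacle, is therefore to show that a regularizer $B$ of a Fredholm band-dominated operator on $A^p_{\nu}$ is again band-dominated. The inverse-closedness result available to us, Proposition \ref{prop0}$(iii)$, concerns operators on $L^p_{\nu}$, but $A$ is not Fredholm as an operator on $L^p_{\nu}$ (nor is $AP_{\nu}$). To bridge this gap I would pass to the block-diagonal extensions $\tilde{A} := AP_{\nu} + (I - P_{\nu})$ and $\tilde{B} := BP_{\nu} + (I - P_{\nu})$ in $\Lc(L^p_{\nu})$, which act as $A$ (resp.~$B$) on $A^p_{\nu}$ and as the identity on $\ker P_{\nu}$. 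A direct computation using $P_{\nu}^2 = P_{\nu}$ together with $P_{\nu}BP_{\nu} = BP_{\nu}$ and $P_{\nu}AP_{\nu} = AP_{\nu}$ (both ranges lie in $A^p_{\nu}$) yields $\tilde{A}\tilde{B} = I + (AB - I)P_{\nu}$ and $\tilde{B}\tilde{A} = I + (BA - I)P_{\nu}$, both of which are compact perturbations of the identity on $L^p_{\nu}$. Hence $\tilde{A}$ is Fredholm on $L^p_{\nu}$ with regularizer $\tilde{B}$.

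It remains to note that $\tilde{A} \in \BDO^p_{\nu}$: indeed $AP_{\nu} \in \BDO^p_{\nu}$ by hypothesis, $I = M_1 \in \BDO^p_{\nu}$ by Proposition \ref{prop0}$(i)$, and $P_{\nu} \in \BDO^p_{\nu}$ by Proposition \ref{Projections_band_dominated} (with $\alpha = \nu$, which is admissible throughout), so $\tilde{A}$ lies in the algebra $\BDO^p_{\nu}$ by Proposition \ref{prop0}$(ii)$. Proposition \ref{prop0}$(iii)$ then forces the regularizer $\tilde{B}$ to lie in $\BDO^p_{\nu}$, and consequently $BP_{\nu} = \tilde{B} - I + P_{\nu} \in \BDO^p_{\nu}$, i.e.~$B$ is band-dominated in the sense of Definition \ref{BDO_on_A^p}. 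With this in hand the argument of the first paragraph goes through and completes the proof. I expect the only genuine difficulty to be this passage to $L^p_{\nu}$ via the block-diagonal trick; once band-dominatedness of $B$ is secured, everything is a formal consequence of the multiplicativity and continuity properties already established.
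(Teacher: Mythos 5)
Your proposal is correct and follows essentially the same route as the paper: the paper also passes to the extensions $AP_{\nu} + Q_{\nu}$ and $BP_{\nu} + Q_{\nu}$ on $L^p_{\nu}$ (your $\tilde{A}$, $\tilde{B}$ with $Q_{\nu} = I - P_{\nu}$), invokes Proposition \ref{prop0}$(iii)$ to conclude that the regularizer, and hence $B$, is band-dominated, and then uses Corollary \ref{limit_operator_properties2} together with the vanishing of limit operators of compact operators to get $B_x = A_x^{-1}$ and the bound $\norm{A_x^{-1}} \leq \norm{B}$. The only difference is cosmetic: you verify the identity $\tilde{A}\tilde{B} = I + (AB-I)P_{\nu}$ explicitly, where the paper merely asserts that $BP_{\nu} + Q_{\nu}$ is a regularizer of $AP_{\nu} + Q_{\nu}$.
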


\begin{proof}
Let $B$ be a Fredholm regularizer of $A$ and denote by $Q_{\nu} := I - P_{\nu} \in \BDO^p_{\nu}$ the complementary projection to $P_{\nu} \in \BDO^p_{\nu}$. Then $BP_{\nu} + Q_{\nu} \in \Lc(L^p_{\nu})$ is a Fredholm regularizer of $AP_{\nu} + Q_{\nu}$. By Proposition \ref{prop0}, this implies $BP_{\nu} + Q_{\nu} \in \BDO^p_{\nu}$ and hence $B$ is band-dominated. Similarly, $AB$ is band-dominated. Therefore, by Proposition \ref{prop6}, for every net $(z_{\gamma})$ converging to some $x \in \beta\Omega \setminus \Omega$ the strong limits of $U_{z_{\gamma}}^pBU_{z_{\gamma}}^p|_{A^p_{\nu}}$ and $U_{z_{\gamma}}^pABU_{z_{\gamma}}^p|_{A^p_{\nu}}$ exist and they are equal to $B_x$ and $(AB)_x$, respectively. Moreover, $AB-I$ and $BA-I$ are compact, hence  $(AB-I)_x = (BA-I)_x = 0$ by Theorem \ref{thm5}. It follows
\[0 = (AB-I)_x = A_xB_x - I\]
by Corollary \ref{limit_operator_properties2}, i.e.~$A_xB_x = I$. Together with the reversed equality we get $B_x = A_x^{-1}$ and hence $\norm{A_x^{-1}} \leq \norm{B}$ by Corollary \ref{limit_operator_properties2} again.
\end{proof}

The other direction is more difficult to show and needs some more preparation. For $p \leq 2$ and $\alpha = (\frac{2}{p} - 1)g + \frac{2\nu}{p}$ and $A \in \Lc(A^p_{\nu})$ we will use the notation $\hat{A}$ for the extension $AP_{\alpha} + Q_{\alpha} \in \Lc(L^p_{\nu})$, where $Q_{\alpha} := I - P_{\alpha}$ is the complementary projection. Note that since
\[AP_{\alpha} + Q_{\alpha} = AP_{\nu}P_{\alpha} + Q_{\alpha} \quad \text{and} \quad AP_{\nu} = (AP_{\alpha} + Q_{\alpha})P_{\nu},\]
$A$ is band-dominated if and only if $\hat{A}$ is (cf.~Proposition \ref{Projections_band_dominated}). Also note that
\[\widehat{A_x} = (AP_{\nu})_xP_{\alpha} + Q_{\alpha} = \wlim\limits_{z_{\gamma} \to x} U_{z_{\gamma}}^pAP_{\nu}U_{z_{\gamma}}^pP_{\alpha} + Q_{\alpha} = \wlim\limits_{z_{\gamma} \to x} U_{z_{\gamma}}^p(AP_{\alpha} + Q_{\alpha})U_{z_{\gamma}}^p = (\hat{A})_x\]
for any net $(z_{\gamma})$ coverging to $x \in \beta\Omega$ by Proposition \ref{interchange_prop}. We may therefore just write $\hat{A}_x$ without creating any ambiguities.

\begin{lem} \label{lem7}
Let $p \leq 2$, $\alpha = (\frac{2}{p} - 1)g + \frac{2\nu}{p}$, let $\xi \in L^{\infty}(\Omega)$ have compact support and let $A \in \Lc(A^p_{\nu})$ be band-dominated. Further assume that $(z_{\gamma})$ is a net in $\Omega$ converging to some $x \in \beta\Omega$ such that $A_x$ is invertible. Then there is a $\gamma_0$ such that for all $\gamma \geq \gamma_0$ there are operators $B_\gamma,C_{\gamma} \in \Lc(L^p_{\nu})$ with $\norm{B_{\gamma}}, \norm{C_{\gamma}} \leq 2\left(\norm{A_x^{-1}}\norm{P_{\alpha}} + \norm{Q_{\alpha}}\right)$ and
\[B_{\gamma}\hat{A}M_{\xi \circ \phi_{z_{\gamma}}} = M_{\xi \circ \phi_{z_{\gamma}}} = M_{\xi \circ \phi_{z_{\gamma}}}\hat{A}C_{\gamma}.\]
\end{lem}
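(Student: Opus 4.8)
The plan is to conjugate everything by the involution $U_{z_\gamma}^p$ and reduce both identities to a single local inversion problem for the shifted operators $\hat A^{(\gamma)} := U_{z_\gamma}^p \hat A U_{z_\gamma}^p$. Since $M_{\xi \circ \phi_{z_\gamma}} = U_{z_\gamma}^p M_\xi U_{z_\gamma}^p$ by Proposition \ref{multiplication_shift} and $U_{z_\gamma}^p$ is a surjective isometry with $(U_{z_\gamma}^p)^2 = I$, writing $B_\gamma = U_{z_\gamma}^p \tilde B_\gamma U_{z_\gamma}^p$ and $C_\gamma = U_{z_\gamma}^p \tilde C_\gamma U_{z_\gamma}^p$ turns the two desired identities into $\tilde B_\gamma \hat A^{(\gamma)} M_\xi = M_\xi$ and $M_\xi \hat A^{(\gamma)} \tilde C_\gamma = M_\xi$, without changing norms. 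The target is $\hat A_x = A_x P_\alpha + Q_\alpha$, which is invertible with $\hat A_x^{-1} = A_x^{-1} P_\alpha + Q_\alpha$ and $\norm{\hat A_x^{-1}} \le \norm{A_x^{-1}}\norm{P_\alpha} + \norm{Q_\alpha} =: M$; this is exactly the quantity in the claimed bound, and the extra factor $2$ will come from a Neumann series.

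The key analytic input is a one-sided norm convergence. By Proposition \ref{interchange_prop}, $U_{z_\gamma}^p$ commutes with $P_\alpha$ (hence with $Q_\alpha$), so $\hat A^{(\gamma)} = U_{z_\gamma}^p A U_{z_\gamma}^p|_{A^p_\nu} P_\alpha + Q_\alpha$, and Proposition \ref{prop6} gives $\hat A^{(\gamma)} \to \hat A_x$ strongly on $L^p_\nu$. Moreover the difference $\hat A^{(\gamma)} - \hat A_x = (U_{z_\gamma}^p A U_{z_\gamma}^p - A_x)P_\alpha$ has range in $A^p_\nu$, so $\hat A^{(\gamma)} - \hat A_x = P_\alpha(\hat A^{(\gamma)}-\hat A_x)P_\alpha$. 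I fix a compactly supported $\psi \from \Omega \to [0,1]$ with $\psi \equiv 1$ on $\supp \xi$, so that $M_\psi M_\xi = M_\xi M_\psi = M_\xi$ and $M_{1-\psi}M_\xi = M_\xi M_{1-\psi} = 0$. Since $P_\alpha M_\psi$ is compact by \cite[Proposition 15]{Hagger}, composing the strongly convergent difference with it on the right gives $(\hat A^{(\gamma)} - \hat A_x)M_\psi = (\hat A^{(\gamma)}-\hat A_x)P_\alpha M_\psi \to 0$ in norm.

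For the left inverse I set $T_\gamma := \hat A_x^{-1}\hat A^{(\gamma)}M_\psi + M_{1-\psi}$. The previous step gives $T_\gamma = I + E_\gamma$ with $E_\gamma := \hat A_x^{-1}(\hat A^{(\gamma)}-\hat A_x)M_\psi \to 0$ in norm, so there is a $\gamma_0$ with $\norm{E_\gamma} \le \frac{1}{2}$, hence $T_\gamma$ invertible and $\norm{T_\gamma^{-1}} \le 2$, for all $\gamma \ge \gamma_0$. Put $\tilde B_\gamma := T_\gamma^{-1}\hat A_x^{-1}$, so $\norm{\tilde B_\gamma} \le 2M$. Multiplying $T_\gamma^{-1}T_\gamma = I$ on the right by $M_\xi$ and using $M_\psi M_\xi = M_\xi$, $M_{1-\psi}M_\xi = 0$ collapses everything to $\tilde B_\gamma \hat A^{(\gamma)}M_\xi = M_\xi$; undoing the conjugation yields $B_\gamma = U_{z_\gamma}^p\tilde B_\gamma U_{z_\gamma}^p$ with $\norm{B_\gamma} \le 2M$ and $B_\gamma \hat A M_{\xi \circ \phi_{z_\gamma}} = M_{\xi \circ \phi_{z_\gamma}}$.

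The right inverse is the mirror image: with $\hat S_\gamma := M_\psi \hat A^{(\gamma)}\hat A_x^{-1} + M_{1-\psi} = I + \hat E_\gamma$, where $\hat E_\gamma := M_\psi(\hat A^{(\gamma)}-\hat A_x)\hat A_x^{-1}$, the choice $\tilde C_\gamma := \hat A_x^{-1}\hat S_\gamma^{-1}$ satisfies $M_\xi \hat A^{(\gamma)}\tilde C_\gamma = M_\xi$ with $\norm{\tilde C_\gamma} \le 2M$ once $\norm{\hat E_\gamma} \to 0$, by the same collapsing computation (now using $M_\xi M_\psi = M_\xi$ and $M_\xi M_{1-\psi} = 0$). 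This last point is the crux and the main obstacle: writing $\hat E_\gamma = M_\psi P_\alpha(\hat A^{(\gamma)}-\hat A_x)P_\alpha \hat A_x^{-1}$, the strongly null difference is now multiplied by the compact operator $M_\psi P_\alpha$ on the \emph{left}, and a compact operator applied on the left of a strongly null net need not give a norm-null net — it does so precisely when the adjoints converge strongly. Thus I must show $(\hat A^{(\gamma)})^* \to (\hat A_x)^*$ strongly on $L^q_\nu$. This is exactly what the duality results are designed for: Proposition \ref{prop_T_b_x} identifies $(U_{z_\gamma}^p)^*$-conjugation with $U_{z_\gamma}^q$-conjugation up to the strongly convergent invertible Toeplitz factors $T_{b_{z_\gamma}}^{\pm 1}$, and Corollary \ref{cor_adjoints}, via Proposition \ref{prop6} applied to the band-dominated operator $A^* \in \Lc(A^q_\nu)$, converts this into the required strong convergence of the adjoints; it is here that the hypothesis $p \le 2$ (i.e.\ $q \ge 2$) enters. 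Taking $\gamma_0$ large enough to serve both constructions gives the uniform bound $\norm{B_\gamma}, \norm{C_\gamma} \le 2(\norm{A_x^{-1}}\norm{P_\alpha} + \norm{Q_\alpha})$ and completes the proof.
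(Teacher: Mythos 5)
Your proof is correct and follows essentially the same route as the paper: localize with a compactly supported cut-off $\psi$ (the paper uses $\chi_{D(0,R)}$ with $\supp\xi \subseteq D(0,R)$), write $\hat{A}_x^{-1}\hat{A}_{z_\gamma}M_\psi + M_{1-\psi} = I + E_\gamma$ and $M_\psi\hat{A}_{z_\gamma}\hat{A}_x^{-1} + M_{1-\psi} = I + \hat{E}_\gamma$, invert by a Neumann series once the error terms have norm below $\tfrac12$, and conjugate back with $U_{z_\gamma}^p$ via Propositions \ref{multiplication_shift} and \ref{interchange_prop}; your $\tilde{B}_\gamma$ and $\tilde{C}_\gamma$ coincide with the paper's $(I+R_\gamma)^{-1}\hat{A}_x^{-1}$ and $\hat{A}_x^{-1}(I+S_\gamma)^{-1}$. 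In fact, at the right-inverse step your write-up is more careful than the paper's: the paper simply asserts $\norm{M_{\chi_{D(0,R)}}(\hat{A}_{z_{\gamma}} - \hat{A}_x)} \to 0$ from compactness of $M_{\chi_{D(0,R)}}P_{\alpha}$, glossing over exactly the issue you isolate --- a compact factor on the \emph{left} of a bounded strongly null net does not by itself give norm convergence; one must pass to adjoints, and the strong convergence of $(\hat{A}_{z_\gamma})^*$ on $L^q_\nu$ is what Proposition \ref{prop_T_b_x} and Corollary \ref{cor_adjoints} (i.e.\ Proposition \ref{prop6} applied to $A^* \in \Lc(A^q_\nu)$) provide. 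Your filling in of that step is right.

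One remark in your proof is wrong, though it does not damage the argument: the hypothesis $p \leq 2$ does \emph{not} enter through the duality step. Propositions \ref{prop6} and \ref{prop_T_b_x} and Corollary \ref{cor_adjoints} hold for every $p \in (1,\infty)$ with $(\nu,\nu,p)$ admissible; nothing there needs $q \geq 2$. The hypothesis $p \leq 2$ is needed at the very start: it forces $\alpha = (\frac{2}{p}-1)g + \frac{2\nu}{p} \geq \nu$, so that $(\alpha,\nu,p)$ is admissible whenever $(\nu,\nu,p)$ is, which is what makes $P_\alpha$ bounded and licenses Proposition \ref{interchange_prop} and the compactness of $P_\alpha M_\psi$ and $M_\psi P_\alpha$ from \cite[Proposition 15]{Hagger} --- facts you use throughout without comment. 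You should add this observation (it is the first line of the paper's proof); with it, your argument is complete.
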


\begin{proof}
First observe that $p \leq 2$ implies $\alpha \geq \nu$ and hence $(\alpha,\nu,p)$ is always admissible if $(\nu,\nu,p)$ is, which is assumed throughout this paper (see Section \ref{BSDs}, in particular Equation \eqref{admissible2}). Moreover, if $x \in \Omega$ and $A_x = U_x^pAU_x^p|_{A^p_{\nu}}$ is invertible, then $A$ is invertible and so the assertion holds trivially. We may therefore assume that $x \in \beta\Omega \setminus \Omega$.

Let $D(0,R) := \set{z \in \Omega : \beta(0,z) < R}$, where $R > 0$ is chosen sufficiently large such that $\supp\xi \subseteq D(0,R)$. By Proposition \ref{prop6}, $U_{z_{\gamma}}^pAU_{z_{\gamma}}^p|_{A^p_{\nu}}$ converges strongly to $A_x$. Moreover, the operator $P_{\alpha}M_{\chi_{D(0,R)}}$ is compact by \cite[Proposition 15]{Hagger}. Combining these facts and using Proposition \ref{interchange_prop}, we get
\begin{align*}
\norm{(\hat{A}_{z_{\gamma}} - \hat{A}_x)M_{\chi_{D(0,R)}}} &= \norm{\left(U_{z_{\gamma}}^p(AP_{\alpha} + Q_{\alpha})U_{z_{\gamma}}^p - A_xP_{\alpha} - Q_{\alpha})\right)M_{\chi_{D(0,R)}}}\\
&= \norm{\left(U_{z_{\gamma}}^pAU_{z_{\gamma}}^p - A_x\right)P_{\alpha}M_{\chi_{D(0,R)}}} \to 0
\end{align*}
as $z_{\gamma} \to x$. $\hat{A}_x$ is invertible with $(\hat{A}_x)^{-1} = \widehat{A_x^{-1}}$, which implies that there exists a $\gamma_0$ such that $R_{\gamma} := (\hat{A}_x)^{-1}(\hat{A}_{z_{\gamma}} - \hat{A}_x)M_{\chi_{D(0,R)}}$ satisfies $\norm{R_{\gamma}} < \frac{1}{2}$ for all $\gamma \geq \gamma_0$. In particular, $I + R_{\gamma} \in \Lc(L^p_{\nu})$ is invertible for all $\gamma \geq \gamma_0$. Using Proposition \ref{multiplication_shift}, it is now easy to see that
\[U_{z_{\gamma}}^p(I + R_{\gamma})^{-1}(\hat{A}_x)^{-1}U_{z_{\gamma}}^p\hat{A}M_{\xi \circ \phi_{z_{\gamma}}} = M_{\xi \circ \phi_{z_{\gamma}}}\]
and the first assertion follows (cf.~\cite[Proposition 19]{Hagger}).

For the second assertion note that $M_{\chi_{D(0,R)}}P_{\alpha}$ is compact as well (see \cite[Proposition 15]{Hagger}). Thus
\[\norm{M_{\chi_{D(0,R)}}(\hat{A}_{z_{\gamma}} - \hat{A}_x)} = \norm{M_{\chi_{D(0,R)}}P_{\alpha}\left(U_{z_{\gamma}}^pAU_{z_{\gamma}}^p - A_x\right)P_{\alpha}} \to 0\]
and we obtain
\[M_{\xi \circ \phi_{z_{\gamma}}}\hat{A}U_{z_{\gamma}}^p(\hat{A}_x)^{-1}(I + S_{\gamma})^{-1}U_{z_{\gamma}}^p = M_{\xi \circ \phi_{z_{\gamma}}}\]
for sufficiently large $\gamma$ and $S_{\gamma} := M_{\chi_{D(0,R)}}(\hat{A}_{z_{\gamma}} - \hat{A}_x)(\hat{A}_x)^{-1}$.
\end{proof}

Now we are ready to prove the other direction. Together with Proposition \ref{Fredholmness_necessary_condition}, we get the following theorem.

\begin{thm} \label{thm1}
Let $A \in \Lc(A^p_{\nu})$ be band-dominated. Then $A$ is Fredholm if and only if $A_x$ is invertible for every $x \in \beta\Omega \setminus \Omega$ and $\sup\limits_{x \in \beta\Omega \setminus \Omega} \norm{A_x^{-1}} < \infty$.
\end{thm}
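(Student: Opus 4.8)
Since Proposition~\ref{Fredholmness_necessary_condition} already gives the direction ``Fredholm $\Rightarrow$ invertible limit operators with uniformly bounded inverses'', the plan is to prove the converse: assuming $A_x$ is invertible for every $x \in \beta\Omega \setminus \Omega$ with $M := \sup_{x} \norm{A_x^{-1}} < \infty$, I would construct a Fredholm regularizer of $A$. First I would reduce to the case $p \leq 2$: if $p > 2$, I pass to $A^* \in \Lc(A^q_{\nu})$ with $q < 2$, which is band-dominated; by Corollary~\ref{cor_adjoints} together with the uniform bounds $\norm{T_{b_x}^{\pm 1}} \leq \norm{P_{\nu}}$ from Proposition~\ref{prop_T_b_x}, the hypotheses transfer to $A^*$, and $A$ is Fredholm if and only if $A^*$ is. So I may assume $p \leq 2$, set $\alpha = (\frac{2}{p}-1)g + \frac{2\nu}{p}$, and work with $\hat{A} = AP_{\alpha} + Q_{\alpha} \in \Lc(L^p_{\nu})$, which is Fredholm if and only if $A$ is, since it acts as $A$ on $A^p_{\nu}$ and as the identity on $\ker P_{\alpha}$.

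The next step is to promote Lemma~\ref{lem7} to a \emph{uniform} local invertibility statement on a collar of $\partial\Omega$. Fixing a bump function $\xi$ equal to $1$ on a Bergman ball $D(0,R_1)$ and supported in $D(0,R)$, I claim there is an $s > 0$ such that every $w \in \Omega \setminus D(0,s)$ admits local left and right regularizers for $\hat{A}$, localized by $\xi \circ \phi_w$ and of norm at most $2(M\norm{P_{\alpha}} + \norm{Q_{\alpha}})$. This follows by a compactness argument: if the set of points where no such regularizer exists accumulated at some $y \in \beta\Omega \setminus \Omega$, then a net of such points would converge to $y$, and since $A_y$ is invertible Lemma~\ref{lem7} would produce regularizers at these points for large indices, a contradiction. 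Hence the bad set has compact closure in $\Omega$ and is therefore contained in some $D(0,s)$.

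I would then glue the local regularizers. Choose a locally finite cover of $\Omega \setminus D(0,s)$ by balls $D(w_k,R_0)$ with $R_0 < R_1$, a subordinate partition of unity $\psi_k$ with $\eta_k := \xi \circ \phi_{w_k} \equiv 1$ on $\supp \psi_k$, and a core cutoff $\chi_0 = 1 - \sum_k \psi_k$ supported in $D(0,s)$. With the right regularizers $C_k$ from the previous step I set $B := \sum_k C_k M_{\psi_k}$, whose boundedness follows from the finite-overlap constant $N$ and the uniform bound on $\norm{C_k}$. Using $M_{\eta_k}\hat{A}C_k = M_{\eta_k}$ and $M_{\eta_k}M_{\psi_k} = M_{\psi_k}$ gives
\[\hat{A}B = \sum_k M_{\psi_k} + \sum_k M_{1-\eta_k}\hat{A}C_k M_{\psi_k} = I - M_{\chi_0} + E.\]
Since $\dist_{\beta}(\supp(1-\eta_k),\supp \psi_k) \geq R_1 - R_0 > 0$ uniformly in $k$, approximating $\hat{A}$ by a band operator of band width below $R_1 - R_0$ makes $\norm{E}$ as small as desired, while sandwiching with $P_{\nu}$ turns the core term into a compact operator because $P_{\nu}M_{\chi_0}$ is compact (\cite[Proposition 15]{Hagger}). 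A Neumann series then produces a genuine right regularizer of $A$, and the left regularizer is built symmetrically from the operators $B_{\gamma}$ of Lemma~\ref{lem7}; hence $A$ is Fredholm.

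The main obstacle I expect is the gluing step: making sense of the possibly infinite sum $\sum_k C_k M_{\psi_k}$ and bounding it by means of the local finiteness constant $N$, and then cleanly separating the remainder $\hat{A}B - I$ into a norm-small part (controlled by band approximation) and a genuinely compact part (the core term, controlled by compactness of $P_{\nu}M_{\chi_0}$). This separation is exactly what upgrades ``invertibility at infinity'' to honest Fredholmness, and getting the two error contributions to coexist without interfering is the delicate point.
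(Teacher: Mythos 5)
Your overall architecture is reasonable and close in spirit to the paper: Proposition~\ref{Fredholmness_necessary_condition} for the forward direction, passage to the adjoint via Corollary~\ref{cor_adjoints} and Proposition~\ref{prop_T_b_x} to assume $p \leq 2$, and a compactness argument in $\beta\Omega$ that upgrades Lemma~\ref{lem7} to uniform local invertibility on a collar $\Omega \setminus D(0,s)$ (this last step is fine, since the hypothesis $\sup_x \norm{A_x^{-1}} < \infty$ makes the norm bound in Lemma~\ref{lem7} uniform). The fatal problem is the gluing step. The operators $C_k$ supplied by Lemma~\ref{lem7} have the form $U_{w_k}^p(\hat{A}_x)^{-1}(I+S)^{-1}U_{w_k}^p$; they are band-dominated but in no sense local, i.e.\ they do not map functions supported near $\supp \psi_k$ to functions supported near $\supp \psi_k$. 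This breaks both of your estimates. First, $B = \sum_k C_k M_{\psi_k}$ is not obviously bounded: finite overlap gives the $\ell^p$-type bound $\sum_k \norm{M_{\psi_k}f}^p \leq N \norm{f}^p$, but since the vectors $C_k M_{\psi_k}f$ are spread over all of $\Omega$, the only available estimate is the triangle inequality $\sum_k \norm{C_k}\norm{M_{\psi_k}f}$, and an $\ell^1$-sum of an $\ell^p$-summable sequence diverges in general; to reassemble in $\ell^p$ one needs multiplication operators of finite multiplicity on the \emph{left} of each $C_k$ acting on localized outputs, which is exactly what is missing. Second, and for the same reason, the error $E = \sum_k M_{1-\eta_k}\hat{A}C_k M_{\psi_k}$ is not controlled by replacing $\hat{A}$ with a band operator $\hat{A}'$ of band width below $R_1 - R_0$: the separation $\dist_{\beta}(\supp(1-\eta_k),\supp \psi_k) \geq R_1 - R_0$ would kill $M_{1-\eta_k}\hat{A}'M_{\psi_k}$, but your term is $M_{1-\eta_k}\hat{A}'C_k M_{\psi_k}$, and the input that $\hat{A}'$ sees, namely $C_k M_{\psi_k}f$, is not supported near $\supp \psi_k$, so the band width of $\hat{A}'$ is irrelevant. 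A uniform-in-$k$ band approximation of the $C_k$ themselves would repair this, but nothing in your argument (or in the quantitative content of Proposition~\ref{prop0}$(iii)$) provides one.

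For comparison, the paper never glues the operators from Lemma~\ref{lem7} at all. It invokes the Fredholm criterion of \cite[Proposition 17]{Hagger}, in whose proof the local regularizers are sandwiched between multiplication operators on \emph{both} sides, so that the error terms are commutators of the form $[\hat{A},M_{\varphi_{j,t}}]$; these are uniformly small by the key commutator estimate (Lemma~\ref{lem3}) and, for band operators, localized and hence $\ell^p$-summable --- support separation plays no role. The theorem is then proved by contradiction: if $A$ is not Fredholm, the negation of that criterion yields a $t$ and a sequence $(j_m)$ such that, say, no left regularizer of norm at most $M := 2\bigl(\sup_x \norm{A_x^{-1}}\norm{P_{\alpha}} + \norm{Q_{\alpha}}\bigr)$ exists relative to $M_{\psi_{j_m,t}}$; the supports of the $\psi_{j_m,t}$ drift to $\partial\Omega$, a subnet of their centers converges to some $x \in \beta\Omega \setminus \Omega$, and Lemma~\ref{lem7} with $\xi = \chi_{D(0,R)}$ produces precisely the forbidden regularizers. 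If you want a self-contained direct construction in the style of your proposal, the repair is to define the candidate regularizer with cutoffs on both sides, e.g.\ $B = \sum_k M_{\psi_k}B_k M_{\eta_k}$ built from the local left regularizers satisfying $B_k\hat{A}M_{\eta_k} = M_{\eta_k}$, which gives
\begin{equation*}
B\hat{A} = \sum_k M_{\psi_k} - \sum_k M_{\psi_k}B_k[\hat{A},M_{\eta_k}] = I - M_{\chi_0} - \sum_k M_{\psi_k}B_k[\hat{A},M_{\eta_k}],
\end{equation*}
and then to control the commutator sum by choosing $\xi$ with small Lipschitz constant (so Lemma~\ref{lem3} applies uniformly), using the band-operator localization of $[\hat{A}',M_{\eta_k}]$ for summability, and a splitting trick for the band-dominated remainder $\hat{A}-\hat{A}'$; this is essentially the content of the cited proposition.
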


\begin{proof}
In Proposition \ref{Fredholmness_necessary_condition} we have seen that if $A$ is Fredholm, then $A_x$ is invertible for every $x \in \beta\Omega \setminus \Omega$ and $\sup\limits_{x \in \beta\Omega \setminus \Omega} \norm{A_x^{-1}} < \infty$.

For the converse assume that $A$ is a band-dominated operator such that all limit operators $A_x$ are invertible and their inverses are uniformly bounded. Without loss of generality we may assume $p \leq 2$ because otherwise we could just pass to the adjoint and use Corollary \ref{cor_adjoints}. As in Lemma \ref{lem7} we set $\alpha = (\frac{2}{p} - 1)g + \frac{2\nu}{p}$.

Let $\psi_{j,t}$ be the functions defined in Section \ref{BSDs} and assume that $A$ is not Fredholm. Note that
\[[\hat{A},P_{\alpha}] = (AP_{\alpha} + Q_{\alpha})P_{\alpha} - P_{\alpha}(AP_{\alpha} + Q_{\alpha}) = AP_{\alpha} - P_{\alpha}AP_{\alpha} = 0\]
as $A \in \Lc(A^p_{\nu})$. Moreover, $\hat{A}$ is band-dominated because $A$ is. In this particular case \cite[Proposition 17]{Hagger} provides a criterion for $A = \hat{A}|_{A^p_{\nu}}$ to be Fredholm. As this would contradict our assumption, this criterion cannot be satisfied. In short, its negation reads
\[\nexists M > 0 : \forall \, t \in (0,1) \, \exists j_0 \in \N : \forall \, j \geq j_0 \, \exists B_{j,t},C_{j,t} \in \overline{B_{\norm{\cdot}}(0,M)} : B_{j,t}\hat{A}M_{\psi_{j,t}} = M_{\psi_{j,t}} = M_{\psi_{j,t}}\hat{A}C_{j,t},\]
where $\overline{B_{\norm{\cdot}}(0,M)} := \set{B \in \Lc(L^p_{\nu}) : \norm{B} \leq M}$ denotes the closed ball of radius $M$ in $\Lc(L^p_{\nu})$. This is equivalent to
\begin{align*}
\forall \, M > 0 \, \exists t \in (0,1) : \forall \, j_0 \in \N \, \exists j \geq j_0 : \forall B,C \in \, &\overline{B_{\norm{\cdot}}(0,M)} : B\hat{A}M_{\psi_{j,t}} \neq M_{\psi_{j,t}} \text{or } M_{\psi_{j,t}} \neq M_{\psi_{j,t}}\hat{A}C.
\end{align*}
In particular, there is a $t \in (0,1)$ and a strictly increasing sequence $(j_m)_{m \in \N}$ such that
\[B\hat{A}M_{\psi_{j_m,t}} \neq M_{\psi_{j_m,t}} \quad \text{or} \quad M_{\psi_{j_m,t}} \neq M_{\psi_{j_m,t}}\hat{A}C\]
for all $m \in \N$ and all $B \in \Lc(L^p_{\nu})$ with $\norm{B} \leq M := 2\left(\sup\limits_{x \in \beta\Omega \setminus \Omega} \norm{A_x^{-1}}\norm{P_{\alpha}} + \norm{Q_{\alpha}}\right)$. By choosing a suitable subsequence if necessary, we may assume that either always the first or always the second inequality happens. As both cases can be treated in the same way, we may assume that
\begin{equation} \label{thm1_eq}
B\hat{A}M_{\psi_{j_m,t}} \neq M_{\psi_{j_m,t}}
\end{equation}
for all $m \in \N$ and $B \in \overline{B_{\norm{\cdot}}(0,M)}$. Now by property $(e)$ of the functions $\psi_{j,t}$, the diameters $\diam_{\beta} \supp \psi_{j,t}$ are bounded by a constant not depending on $j \in \N$. Thus every $\supp \psi_{j_m,t}$ is contained in a Bergman ball $D(w_m,R)$ for a fixed radius $R$. As every $\supp \psi_{j_m,t}$ at least contains a Bergman ball of radius $\frac{1}{t}$ and for every $z \in \Omega$ the set $\set{m \in \N : z \in \supp \psi_{j_m,t}}$ has at most $N$ elements, it is also clear that the sequence of midpoints $(w_m)_{m \in \N}$ tends to the boundary $\partial\Omega$ as $m \to \infty$. By compactness of $\beta\Omega$, this sequence has a subnet $(w_{m_\gamma})$ that converges to some $x \in \beta\Omega \setminus \Omega$. Thus, by choosing $\xi = \chi_{D(0,R)}$ in Lemma \ref{lem7}, we obtain a $\gamma_0$ such that for all $\gamma \geq \gamma_0$ there is an operator $B_\gamma \in \overline{B_{\norm{\cdot}}(0,M)}$ with
\[B_\gamma\hat{A}M_{\chi_{D(w_{m_\gamma},R)}} = B_\gamma\hat{A}M_{\chi_{D(0,R)} \circ \phi_{w_{m_{\gamma}}}} = M_{\chi_{D(0,R)} \circ \phi_{w_{m_{\gamma}}}} = M_{\chi_{D(w_{m_\gamma},R)}}.\]
By multiplying with $M_{\psi_{j_{m_\gamma,t}}}$ from the right, we obtain a contradiction to \eqref{thm1_eq}. Therefore $A$ has to be Fredholm.
\end{proof}

Next we will show that the uniform boundedness condition for the inverses is actually redundant. The argument is very similar to the unit ball and the sequence space case, cf.~\cite{Hagger,LiSe}. First, we need an analogue of $\vertiii{\cdot|_F}$ (cf.~Section \ref{compactness}).

Recall that $r_t = \sup\limits_{j \in \N} \diam_{\beta} \supp \varphi_{j,t} < \infty$. For every $t \in (0,1)$, $A \in \Lc(L^p_{\nu})$ and every Borel set $F \subseteq \Omega$ we define
\[\nu_t(A|_F) := \inf\set{\norm{Af} : f \in L^p_{\nu}, \norm{f} = 1, \supp f \subseteq D(w,r_t) \cap F \text{ for some } w \in \Omega}\]
and
\[\nu(A|_F) := \inf\set{\norm{Af} : f \in L^p_{\nu}, \norm{f} = 1, \supp f \subseteq F}.\]
Moreover, $\nu(A) := \nu(A|_{\Omega})$.

The following lemma is immediate (see e.g.~\cite[Proposition 22]{Hagger} or \cite[Lemma 2.38]{Lindner}).

\begin{lem} \label{lem8}
For all $A,B \in \Lc(L^p_{\nu})$, $t \in (0,1)$ and all Borel sets $F \subseteq \Omega$ it holds 
\[\abs{\nu(A|_F) - \nu(B|_F)} \leq \norm{(A - B)M_{\chi_F}} \quad \text{and} \quad \abs{\nu_t(A|_F) - \nu_t(B|_F)} \leq \norm{(A - B)M_{\chi_F}}.\]
\end{lem}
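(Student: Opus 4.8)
The plan is to exploit the fact that the infima defining $\nu(A|_F)$ and $\nu_t(A|_F)$ range only over test functions supported in $F$, so that $A$ and $B$ can be compared through the localized difference $(A-B)M_{\chi_F}$ rather than through $A-B$ itself. First I would fix an arbitrary $f \in L^p_{\nu}$ with $\norm{f} = 1$ and $\supp f \subseteq F$. For such an $f$ we have $M_{\chi_F}f = f$, hence $(A-B)f = (A-B)M_{\chi_F}f$, and the reverse triangle inequality gives
\[\abs{\norm{Af} - \norm{Bf}} \leq \norm{(A-B)f} = \norm{(A-B)M_{\chi_F}f} \leq \norm{(A-B)M_{\chi_F}}.\]

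From this pointwise estimate I would pass to the infima. Rearranging the inequality yields $\norm{Af} \geq \norm{Bf} - \norm{(A-B)M_{\chi_F}} \geq \nu(B|_F) - \norm{(A-B)M_{\chi_F}}$ for every admissible $f$, and taking the infimum over all such $f$ gives $\nu(A|_F) \geq \nu(B|_F) - \norm{(A-B)M_{\chi_F}}$. Swapping the roles of $A$ and $B$ produces the reverse inequality, and combining the two establishes $\abs{\nu(A|_F) - \nu(B|_F)} \leq \norm{(A-B)M_{\chi_F}}$.

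Finally I would observe that the argument for $\nu_t$ is word-for-word the same: the admissible functions for $\nu_t(A|_F)$ are precisely those for $\nu(A|_F)$ subject to the extra localization $\supp f \subseteq D(w,r_t)$ for some $w \in \Omega$, but this additional constraint still forces $\supp f \subseteq F$ and hence $M_{\chi_F}f = f$, which is the only property the chain of inequalities above actually uses. There is no genuine obstacle here; the statement is immediate once one notices that restricting the support of the test functions to $F$ converts the full operator difference $A-B$ into the localized difference $(A-B)M_{\chi_F}$, after which it is simply the stability of a lower norm bound under perturbation.
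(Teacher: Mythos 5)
Your proof is correct and is exactly the standard argument the paper has in mind: the paper declares the lemma ``immediate'' and merely cites \cite[Proposition 22]{Hagger} and \cite[Lemma 2.38]{Lindner}, whose proofs proceed the same way, by inserting $M_{\chi_F}$ via the support condition and using the reverse triangle inequality to get perturbation stability of the infimum. Your observation that the extra localization $\supp f \subseteq D(w,r_t) \cap F$ in the definition of $\nu_t$ changes nothing is also the right (and only) point to check for the second inequality.
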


The next proposition is the analogue of Proposition \ref{prop9}. A large part of the proof is actually the same, so we just sketch it here.

\begin{prop} \label{prop10}
Let $A \in \BDO^p_{\nu}$. Then for every $\epsilon > 0$ there exists a $t \in (0,1)$ such that for every Borel set $F \subseteq \Omega$ and every $B \in \set{A} \cup \set{A_x : x \in \beta\Omega \setminus \Omega}$ it holds
\[\nu(B|_F) \leq \nu_t(B|_F) \leq \nu(B|_F) + \epsilon.\]
\end{prop}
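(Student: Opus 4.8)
The plan is to mirror the proof of Proposition \ref{prop9} with the inequalities reversed, reducing first to band operators and then passing to general band-dominated operators by approximation. The left inequality $\nu(B|_F) \leq \nu_t(B|_F)$ holds by definition, since $\nu_t(B|_F)$ is an infimum over a smaller class of test functions (those additionally supported in some ball $D(w,r_t)$). Hence the entire content is the right inequality $\nu_t(B|_F) \leq \nu(B|_F) + \epsilon$, and the crucial point is that $t$ can be chosen independently of $F$ and of $B \in \set{A} \cup \set{A_x : x \in \beta\Omega \setminus \Omega}$.

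First I would treat the case that $A$ is a band operator of width $\omega$. By Proposition \ref{prop5} every admissible $B$ is then a band operator of width at most $\omega$, and $\norm{B} \leq \norm{A}$ by Proposition \ref{limit_operator_properties}. Fix such a $B$ and a Borel set $F$ carrying unit-norm functions (otherwise $\nu(B|_F) = \nu_t(B|_F) = \infty$ and there is nothing to prove), and choose $f$ with $\norm{f} = 1$, $\supp f \subseteq F$ and $\norm{Bf} \leq \nu(B|_F) + \frac{\epsilon}{2}$. Put $g_j := M_{\varphi_{j,t}^{1/p}}f$; then $\supp g_j \subseteq \supp \varphi_{j,t} \cap F \subseteq D(w_j,r_t) \cap F$ for a suitable center $w_j$, and $\sum_j \norm{g_j}^p = \norm{f}^p = 1$. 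Using $\varphi_{j,t}^{1/p} = \varphi_{j,t}^{1/p}\psi_{j,t}$ and the band-width cancellation $M_{\varphi_{j,t}^{1/p}}BM_{1-\psi_{j,t}} = 0$ (valid once $\frac{2}{3t} > \omega$), the same splitting as in Proposition \ref{prop9} gives $Bg_j = M_{\varphi_{j,t}^{1/p}}Bf + [B,M_{\varphi_{j,t}^{1/p}}]M_{\psi_{j,t}}f$. Minkowski's inequality (now used in the forward direction) together with the $N$-fold overlap of the supports of the $\psi_{j,t}$ then yields
\[\left(\sum_{j} \norm{Bg_j}^p\right)^{1/p} \leq \norm{Bf} + N^{1/p}\sup_{j} \norm{[B,M_{\varphi_{j,t}^{1/p}}]}.\]
By Lemma \ref{lem3}, applied to the functions $\varphi_{j,t}^{1/p}$ exactly as in Proposition \ref{prop9}, the second term is at most $\frac{\epsilon}{2}$ once $t$ is small enough, and this holds simultaneously for all band operators of width at most $\omega$; hence $\left(\sum_j \norm{Bg_j}^p\right)^{1/p} \leq \nu(B|_F) + \epsilon$.

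Since $\sum_j \norm{g_j}^p = 1$, this reads $\sum_j \norm{Bg_j}^p \leq (\nu(B|_F) + \epsilon)^p \sum_j \norm{g_j}^p$, so some index $j_0$ with $g_{j_0} \neq 0$ must satisfy $\norm{Bg_{j_0}} \leq (\nu(B|_F) + \epsilon)\norm{g_{j_0}}$. Normalizing $g_{j_0}$ produces a unit test function supported in $D(w_{j_0},r_t) \cap F$, and therefore $\nu_t(B|_F) \leq \nu(B|_F) + \epsilon$, with $t$ depending on neither $F$ nor $B$. For a general band-dominated $A$ I would approximate it in norm by band operators $A_n$; by Lemma \ref{lem8} one has $\abs{\nu(B|_F) - \nu(B_n|_F)} \leq \norm{A - A_n}$ and $\abs{\nu_t(B|_F) - \nu_t(B_n|_F)} \leq \norm{A - A_n}$ for the corresponding members $B_n$ of $\set{A_n} \cup \set{(A_n)_x}$ (using $\norm{A_x - (A_n)_x} \leq \norm{A - A_n}$ from Proposition \ref{limit_operator_properties}), which transfers the estimate uniformly.

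The step I expect to be the main obstacle is not any single computation---those are copied from Proposition \ref{prop9}---but securing the uniformity of the single threshold $t$ over the entire, generally uncountable, family $\set{A} \cup \set{A_x : x \in \beta\Omega \setminus \Omega}$ and over all Borel sets $F$ at once. This is exactly what Lemma \ref{lem3} provides, since its commutator bound is uniform across all band operators of a fixed band width, and Proposition \ref{prop5} guarantees that all the limit operators $A_x$ inherit the band width of $A$. The only additional care points are the extraction of a nonzero piece $g_{j_0}$ and the trivial treatment of Borel sets $F$ that support no unit vector.
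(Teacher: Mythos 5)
Your proposal is correct and follows essentially the same route as the paper's proof: reduction to band operators via Lemma \ref{lem8} and Proposition \ref{limit_operator_properties}, the same splitting $Bg_j = M_{\varphi_{j,t}^{1/p}}Bf + [B,M_{\varphi_{j,t}^{1/p}}]M_{\psi_{j,t}}f$ with Minkowski's inequality, the uniform commutator bound from Lemma \ref{lem3} applied to $\varphi_{j,t}^{1/p}$, and the pigeonhole extraction of a good index $j_0$. The paper's proof is in fact a condensed version of exactly this argument, deferring the shared computations to Proposition \ref{prop9}.
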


\begin{proof}
By Proposition \ref{limit_operator_properties} and Lemma \ref{lem8}, we may assume that $A$ is a band operator. Moreover, the first inequality is clear by definition. For the second inequality observe that all limit operators $A_x$ have the same band width as $A$. So let $B \in \{A\} \cup \set{A_x : x \in \beta\Omega \setminus \Omega}$, $F \subseteq \Omega$ a Borel set and choose $f \in L^p_{\nu}$ with $\norm{f} = 1$ and $\supp f \subseteq F$ such that
\[\norm{Bf} \leq \nu(B|_F) + \frac{\epsilon}{2}.\]
Moreover, let $\varphi_{j,t}$ and $\psi_{j,t}$ be defined as in Section \ref{BSDs}. Then
\begin{align*}
\left(\sum\limits_{j = 1}^{\infty} \norm{BM_{\varphi_{j,t}^{1/p}}f}^p\right)^{1/p} &= \left(\sum\limits_{j = 1}^{\infty} \norm{BM_{\varphi_{j,t}^{1/p}}M_{\psi_{j,t}}f}^p\right)^{1/p}\\
&\leq \left(\sum\limits_{j = 1}^{\infty} \norm{M_{\varphi_{j,t}^{1/p}}Bf}^p\right)^{1/p} + \left(\sum\limits_{j = 1}^{\infty} \norm{M_{\varphi_{j,t}^{1/p}}BM_{1-\psi_{j,t}}f}^p\right)^{1/p}\\
&\qquad + \left(\sum\limits_{j = 1}^{\infty} \norm{[B,M_{\varphi_{j,t}^{1/p}}]M_{\psi_{j,t}}f}^p\right)^{1/p}
\end{align*}
by Minkowski's inequality. As in the proof of Proposition \ref{prop9}, the first term is equal to $\norm{Bf}$ and the other two terms tend to $0$ (uniformly in $f$ and $B$) as $t \to 0$. Thus, for sufficiently small $t$, we get
\[\left(\sum\limits_{j = 1}^{\infty} \norm{BM_{\varphi_{j,t}^{1/p}}f}^p\right)^{1/p} \leq  \norm{Bf} + \frac{\epsilon}{2} \leq \nu(B|_F) + \epsilon = \left(\nu(B|_F) + \epsilon\right)\left(\sum\limits_{j = 1}^{\infty} \norm{M_{\varphi_{j,t}^{1/p}}f}^p\right)^{1/p}.\]
This implies, in particular, that there exists a $j \in \N$ such that
\[\norm{BM_{\varphi_{j,t}^{1/p}}f} \leq \left(\nu(B|_F) + \epsilon\right)\norm{M_{\varphi_{j,t}^{1/p}}f}\]
for sufficiently small $t$. As $\supp \left(M_{\varphi_{j,t}^{1/p}}f\right) \subseteq \supp \varphi_{j,t} \subseteq D(w,r_t)$ for some $w \in \Omega$, the assertion follows.
\end{proof}

The next lemma allows us to centralize certain functions. This will be crucial in the subsequent lemma. Again, we focus on the case $p \leq 2$ as we can always pass to the adjoint if necessary. Recall that we defined $\hat{A} := AP_{\alpha} + Q_{\alpha}$ for $\alpha = (\frac{2}{p} - 1)g + \frac{2\nu}{p}$.

\begin{lem} \label{lem5}
Let $p \leq 2$, $\alpha = (\frac{2}{p} - 1)g + \frac{2\nu}{p}$ and $f \in L^p_{\nu}$ with $\supp f \subseteq D(w,r)$ for some $w \in \Omega$ and $r > 0$. If $A \in \Lc(A^p_{\nu})$ is band-dominated, then for every $x \in \beta\Omega \setminus \Omega$ there exist $y \in \beta\Omega \setminus \Omega$ and $g \in L^p_{\nu}$ with $\supp g \subseteq D(0,r)$ and $\norm{g} = \norm{f}$ such that $\|\hat{A}_xf\| = \|\hat{A}_yg\|$. Moreover,
\[\nu(\hat{A}_y|_{D(0,r + \beta(0,w))}) \leq \nu(\hat{A}_x|_{D(0,r)}).\]
\end{lem}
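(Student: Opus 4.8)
The plan is to exhibit a surjective isometry $V$ of $L^p_\nu$, built from $U_w^p$ and a rotation, that recenters supports from $D(w,r)$ to $D(0,r)$ and conjugates $\hat{A}_x$ into a genuine limit operator $\hat{A}_y$. The whole lemma then reduces to the single operator identity $\hat{A}_y = V^{-1}\hat{A}_x V$ together with bookkeeping of supports.

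First I would set up rotation operators and refactor the shifts. For $k$ in the (compact) group $K$ let $W_k^p \in \Lc(L^p_\nu)$ be the composition operator $(W_k^p f)(w) := f(k^{-1}w)$; since $K$ is compact we have $\abs{\det k}=1$, so $W_k^p$ is a surjective isometry preserving $A^p_\nu$, the map $k \mapsto W_k^p$ and its adjoint are strongly continuous (Scheff\'e's Lemma, as in Proposition \ref{prop2}), and $W_{k_1}^p W_{k_2}^p = W_{k_1 k_2}^p$. Fix a net $(z_\gamma)$ in $\Omega$ with $z_\gamma \to x$ and put $\zeta_\gamma := \phi_{z_\gamma}(w)$. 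The automorphism $\phi_{z_\gamma}\circ\phi_w \in G$ sends $0$ to $\zeta_\gamma$, so by Cartan's linearity theorem there is $k_\gamma \in K$ with $\phi_{z_\gamma}\circ\phi_w = \phi_{\zeta_\gamma}\circ k_\gamma^{-1}$. Both $U_w^p U_{z_\gamma}^p$ and $W_{k_\gamma}^p U_{\zeta_\gamma}^p$ are weighted composition operators implementing this same biholomorphism; their weights are holomorphic and nowhere vanishing on $\Omega$ (products of the branches $h(\cdot,\cdot)^{\lambda}$ chosen in Section \ref{limit_operators}, nonzero by property $(i)$), and they have equal modulus because both operators are isometries of $L^p_\nu$, which forces $\abs{J}^p\,\mathrm{d}v_\nu$ to be the pullback of $\mathrm{d}v_\nu$ under the common map. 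Hence their quotient is a holomorphic unimodular function on the connected domain $\Omega$, i.e.\ a constant $c_\gamma$ with $\abs{c_\gamma}=1$, and
\[U_w^p U_{z_\gamma}^p = c_\gamma\, W_{k_\gamma}^p U_{\zeta_\gamma}^p, \qquad U_{z_\gamma}^p U_w^p = \overline{c_\gamma}\, U_{\zeta_\gamma}^p W_{k_\gamma^{-1}}^p.\]
Substituting into $U_w^p(U_{z_\gamma}^p\hat{A}U_{z_\gamma}^p)U_w^p$ the constants cancel, giving the exact identity
\[U_w^p U_{z_\gamma}^p\,\hat{A}\,U_{z_\gamma}^p U_w^p = W_{k_\gamma}^p\bigl(U_{\zeta_\gamma}^p\,\hat{A}\,U_{\zeta_\gamma}^p\bigr)W_{k_\gamma^{-1}}^p,\]
in which $\hat{A}$ is conjugated purely by the shift $U_{\zeta_\gamma}^p$, with the rotations on the outside.

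Next I would take the limit. Since $z_\gamma \to x \in \beta\Omega\setminus\Omega$ we have $\beta(0,z_\gamma)\to\infty$, hence $\beta(0,\zeta_\gamma) = \beta(z_\gamma,w) \geq \beta(0,z_\gamma)-\beta(0,w)\to\infty$, so $\zeta_\gamma$ eventually leaves every compact subset of $\Omega$. By compactness of $K$ and of $\beta\Omega$ I pass to a subnet along which $k_\gamma \to k\in K$ and $\zeta_\gamma \to y$ for some $y\in\beta\Omega\setminus\Omega$; passing to a subnet does not change the weak limit $U_w^p\hat{A}_x U_w^p$ of the left-hand side. By Proposition \ref{prop2}, $U_{\zeta_\gamma}^p\hat{A}U_{\zeta_\gamma}^p \to \hat{A}_y$ weakly, while $W_{k_\gamma}^p \to W_k^p$ and $W_{k_\gamma^{-1}}^p \to W_{k^{-1}}^p$ strongly with uniformly bounded norms; a routine strong--weak--strong argument (a weakly convergent, uniformly bounded middle factor sandwiched between strongly convergent isometries whose adjoints also converge strongly) then yields
\[U_w^p\,\hat{A}_x\,U_w^p = W_k^p\,\hat{A}_y\,W_{k^{-1}}^p, \qquad\text{i.e.}\qquad \hat{A}_y = V^{-1}\hat{A}_x V,\]
where $V := U_w^p W_k^p$ is a surjective isometry with $V^{-1} = W_{k^{-1}}^p U_w^p$, and $\hat{A}_y$ is a genuine limit operator of $\hat{A}$ depending only on $x$ and $w$.

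Finally, both assertions follow by tracking supports under $V^{-1}$, using that $\phi_w$ interchanges $0$ and $w$ and is a $\beta$-isometry, and that $k^{-1}$ is a $\beta$-isometry fixing $0$ (so it maps $D(a,r)$ to $D(k^{-1}a,r)$ and preserves balls centred at $0$). For the first claim set $g := V^{-1}f = W_{k^{-1}}^p U_w^p f$: then $\supp(U_w^p f) = \phi_w(\supp f) \subseteq \phi_w(D(w,r)) = D(0,r)$ and $\supp g = k^{-1}(\supp U_w^p f)\subseteq D(0,r)$, while $\norm{g}=\norm{f}$ and $\norm{\hat{A}_y g} = \norm{V^{-1}\hat{A}_x V g} = \norm{\hat{A}_x f}$. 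For the ``Moreover'' part, take any $f'$ with $\norm{f'}=1$ and $\supp f'\subseteq D(0,r)$ and set $h := V^{-1}f'$; then $\supp h = k^{-1}(\phi_w(\supp f')) \subseteq k^{-1}(D(w,r)) = D(k^{-1}w,r) \subseteq D(0,r+\beta(0,w))$ since $\beta(0,k^{-1}w)=\beta(0,w)$, while $\norm{h}=1$ and $\norm{\hat{A}_y h}=\norm{V^{-1}\hat{A}_x f'}=\norm{\hat{A}_x f'}$; taking the infimum over all such $f'$ gives $\nu(\hat{A}_y|_{D(0,r+\beta(0,w))}) \leq \nu(\hat{A}_x|_{D(0,r)})$. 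The step I expect to be most delicate is the refactorization: showing that $U_w^p U_{z_\gamma}^p$ and $W_{k_\gamma}^p U_{\zeta_\gamma}^p$ differ only by a unimodular \emph{constant} rather than a nonconstant unimodular multiplier (this is exactly where holomorphy of the weights on the connected domain $\Omega$ enters), together with the control of the non-convergent rotations $W_{k_\gamma}^p$ inside the weak limit, which rests on the compactness of $K$.
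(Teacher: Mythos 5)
Your proof is correct and follows essentially the same route as the paper: both rest on the Cartan-linearity factorization $\phi_{z}\circ\phi_w = \phi_{\phi_z(w)}\circ k^{-1}$ with $k \in K$, compactness of $K$ and of $\beta\Omega$ to extract a subnet along which the rotation and the shifted operators converge, the resulting conjugation identity $U_w^p\hat{A}_xU_w^p = W_k^p\hat{A}_yW_{k^{-1}}^p$ by an isometry, and the same support bookkeeping for both assertions. The only deviations are technical: you replace the paper's explicit formula for $U_z^pU_w^p$ (cited from \cite{NaZheZho}) by a soft argument that a holomorphic unimodular quotient of weights is constant, and you take weak limits of $\hat{A}$ directly on $L^p_{\nu}$ via Proposition \ref{prop2} (with a strong--weak--strong sandwich requiring strong convergence of the rotations and their adjoints), where the paper uses strong convergence on $A^p_{\nu}$ (Proposition \ref{prop6}, Corollary \ref{cor1}) together with the commutation of $P_{\alpha}$ with $U_w^p$ and the limiting rotation $\tilde{V}_*$ --- both are sound.
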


\begin{proof}
Another direct computation yields
\[(U_z^pU_w^pf)(\zeta) = (f \circ \phi_w \circ \phi_z)(\zeta) \frac{h(\phi_z(w),\phi_z(w))^{\frac{\nu+g}{p}}}{h(\zeta,\phi_z(w))^{\frac{2(\nu+g)}{p}}} \left(\frac{h(w,z)}{\abs{h(w,z)}}\right)^{\frac{2(\nu+g)}{p}}\]
for $w,z \in \Omega$ (see e.g.~\cite[Lemma 2.8]{NaZheZho}). As $(\phi_w \circ \phi_z \circ \phi_{\phi_z(w)})(0) = 0$ and $\phi_{\phi_z(w)}$ is an involution, we obtain $\phi_w \circ \phi_z = V \circ \phi_{\phi_z(w)}$ for some $V \in K$ by Cartan's linearity theorem (see Section \ref{BSDs}). This implies
\[U_z^pU_w^p = \left(\frac{h(w,z)}{\abs{h(w,z)}}\right)^{\frac{2(\nu+g)}{p}}U_{\phi_z(w)}^pV_*,\]
where $V_*f := f \circ V$ is a composition operator and, by taking inverses, also
\[U_w^pU_z^p = \left(\frac{h(w,z)}{\abs{h(w,z)}}\right)^{-\frac{2(\nu+g)}{p}}V_*^{-1}U_{\phi_z(w)}^p.\]
Combining these two equalities, we get
\[U_w^pU_z^pAU_z^pU_w^p|_{A^p_{\nu}} = V_*^{-1}U_{\phi_z(w)}^pAU_{\phi_z(w)}^pV_*|_{A^p_{\nu}}\]
for any band-dominated $A \in \Lc(A^p_{\nu})$. Note that $V$ of course depends on $w$ and $z$. We now fix $w \in \Omega$ and choose a net $(z_{\gamma})$ that converges to some $x \in \beta\Omega \setminus \Omega$. As $K$ is a closed subgroup of the unitary group, there is a subnet of $(z_{\gamma})$, again denoted by $(z_{\gamma})$, such that $V$ converges to some $\tilde{V} \in K$ as $z_{\gamma} \to x$. In particular, $V_*|_{A^p_{\nu}}$ converges strongly to $\tilde{V}_*|_{A^p_{\nu}}$ and $V_*^{-1}|_{A^p_{\nu}}$ converges strongly to $\tilde{V}^{-1}_*|_{A^p_{\nu}}$. Moreover, using Corollary \ref{cor1}, we may assume that $U_{\phi_{z_{\gamma}}(w)}^pAU_{\phi_{z_{\gamma}}(w)}^p|_{A^p_{\nu}}$ converges strongly to $A_y$ for some $y \in \beta\Omega$. Since $\phi_{z_{\gamma}}(w) \to \partial\Omega$ as $z_{\gamma} \to \partial\Omega$, it is clear that $y \in \beta\Omega \setminus \Omega$. As the limit of a strongly convergent net is unique and $\slim\limits_{z_{\gamma} \to x} U_{z_{\gamma}}^pAU_{z_{\gamma}}^p|_{A^p_{\nu}} = A_x$ by Proposition \ref{prop6}, we obtain
\[U_w^pA_xU_w^p|_{A^p_{\nu}} = \tilde{V}_*^{-1}A_y\tilde{V}_*|_{A^p_{\nu}}.\]
Now observe that $P_{\alpha}$ commutes with both $U_w^p$ and $\tilde{V}_*$. Indeed, the former was shown in Proposition \ref{interchange_prop}, the latter follows from the fact that $h(z,w)$ is invariant under $K$ (see Section \ref{BSDs}):
\begin{align*}
(P_{\alpha}\tilde{V}_*f)(z) &= \int_{\Omega} f(\tilde{V}w)h(z,w)^{-\alpha-g} \, \mathrm{d}v_{\alpha}(w)\\
&= \int_{\Omega} f(w)h(z,\tilde{V}^{-1}w)^{-\alpha-g} \, \mathrm{d}v_{\alpha}(w)\\
&= \int_{\Omega} f(w)h(\tilde{V}z,w)^{-\alpha-g} \, \mathrm{d}v_{\alpha}(w)\\
&= (\tilde{V}_*P_{\alpha}f)(z).
\end{align*}
Therefore we also have
\[U_w^p\hat{A}_xU_w^p = \tilde{V}_*^{-1}\hat{A}_y\tilde{V}_*.\]

Now clearly, if $f \in L^p_{\nu}$ with $\supp f \subseteq D(w,r)$ for some $w \in \Omega$ and $r > 0$, then $g := \tilde{V}_*U_w^pf$ satisfies $\norm{g} = \norm{f}$ and $\|\hat{A}_xf\| = \|\hat{A}_yg\|$. Moreover,
\[\supp g = \overline{\set{z \in \Omega : g(z) \neq 0}} = \overline{\set{z \in \Omega : f(\phi_w(\tilde{V}z)) \neq 0}} \subseteq D(0,r)\]
because $\phi_w(D(0,r)) = D(w,r)$ and $\tilde{V}(D(0,r)) = D(0,r)$.

For the second assertion consider $f \in L^p_{\nu}$ with $\supp f \subseteq D(0,r)$. Then $g := \tilde{V}_*U_w^pf$ satisfies $\norm{g} = \norm{f}$, $\|\hat{A}_xf\| = \|\hat{A}_yg\|$ and $\supp g \subseteq \tilde{V}^{-1}(\phi_w^{-1}(D(0,r))) \subseteq D(0,r + \beta(0,w))$ as above.
\end{proof}

To actually show that the uniform boundedness condition is redundant, we show that the infimum $\inf\set{\nu(\hat{A}_x) : x \in \beta\Omega \setminus \Omega}$ is always attained. The assertion then follows from the fact that $\nu(B) = \norm{B^{-1}}^{-1}$ for invertible operators $B$.

\begin{lem} \label{lem6}
Let $p \leq 2$, $\alpha = (\frac{2}{p} - 1)g + \frac{2\nu}{p}$ and let $A \in \Lc(A^p_{\nu})$ be band-dominated. Then there exists a $y \in \beta\Omega \setminus \Omega$ with
\[\nu(\hat{A}_y) = \inf\set{\nu(\hat{A}_x) : x \in \beta\Omega \setminus \Omega}.\]
\end{lem}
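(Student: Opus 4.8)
The plan is to prove directly that $d := \inf\set{\nu(\hat A_x) : x \in \beta\Omega \setminus \Omega}$ is attained; this is exactly what removes the uniform boundedness hypothesis afterwards, because for an invertible limit operator $\nu(\hat A_x) = \norm{\hat A_x^{-1}}^{-1}$, so attainment upgrades pointwise invertibility to a uniform lower bound for $\nu$. First I would fix a sequence $(x_n)$ in $\beta\Omega \setminus \Omega$ with $\nu(\hat A_{x_n}) \to d$. Applying Proposition \ref{prop10} to the band-dominated operator $\hat A$ with $\epsilon = \frac{1}{n}$ replaces the global lower norm by the localized one: there is $t_n$ with $\nu_{t_n}(\hat A_{x_n}|_{\Omega}) \leq \nu(\hat A_{x_n}) + \frac{1}{n}$, so I obtain a normalized $f_n$ supported in some ball $D(w_n, r_{t_n})$ with $\norm{\hat A_{x_n} f_n} < d + \frac{2}{n}$. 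Then the first assertion of Lemma \ref{lem5} lets me \emph{centralize}: there are $y_n \in \beta\Omega \setminus \Omega$ and $g_n$ with $\supp g_n \subseteq D(0, r_{t_n})$, $\norm{g_n} = 1$ and $\norm{\hat A_{y_n} g_n} = \norm{\hat A_{x_n} f_n}$. Since also $\norm{\hat A_{y_n} g_n} \geq \nu(\hat A_{y_n}) \geq d$, this produces a minimizing sequence $\norm{\hat A_{y_n} g_n} \to d$ in which every $g_n$ is supported in a ball $D(0, r_{t_n})$ centered at the origin.

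Next I would pass to convergent subnets. As $\beta\Omega \setminus \Omega$ is closed in the compact space $\beta\Omega$, I may assume $y_n \to y \in \beta\Omega \setminus \Omega$. The map $x \mapsto \hat A_x = A_x P_{\alpha} + Q_{\alpha}$ is strongly continuous (arguing as in Proposition \ref{prop6} and Corollary \ref{cor1}, and using $U_z^p P_{\alpha} U_z^p = P_{\alpha}$ from Proposition \ref{interchange_prop}), and so is $x \mapsto \hat A_x^*$ (here I would invoke Corollary \ref{cor_adjoints} together with the strong continuity of $T_{b_x}^{\pm 1}$ from Proposition \ref{prop_T_b_x}). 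Hence $\hat A_{y_n} \to \hat A_y$ and $\hat A_{y_n}^* \to \hat A_y^*$ strongly. Since $L^p_{\nu}$ is reflexive I may also assume $g_n \rightharpoonup g$ weakly. The elementary fact that $T_n \to T$ strongly, $T_n^* \to T^*$ strongly and $g_n \rightharpoonup g$ together force $T_n g_n \rightharpoonup T g$ then gives $\hat A_{y_n} g_n \rightharpoonup \hat A_y g$, and weak lower semicontinuity of the norm yields $\norm{\hat A_y g} \leq \liminf_n \norm{\hat A_{y_n} g_n} = d$.

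It remains to guarantee that the weak limit carries the full mass, i.e.\ that $\norm{g} = 1$; granted this, $\nu(\hat A_y) \leq \norm{\hat A_y g} \leq d$, while $\nu(\hat A_y) \geq d$ is immediate, and the proof is complete. \emph{This is the crux.} The obstacle is that the unit sphere is not weakly closed, so a priori $\norm{g} < 1$: as the radii $r_{t_n}$ grow, part of the mass of $g_n$ may drift toward $\partial\Omega$ and be lost in the weak limit. To handle this I would run a concentration--compactness re-centering scheme modeled on \cite{Hagger, LiSe}: writing $g_n = g + u_n$ with $u_n \rightharpoonup 0$, I would locate the escaping mass by points $w_n$ with $\beta(0, w_n) \to \infty$ and apply the shift identity $U_{w_n}^p \hat A_{y_n} U_{w_n}^p = \tilde V_*^{-1} \hat A_{y'_n} \tilde V_*$ from the proof of Lemma \ref{lem5}, which turns $\hat A_{y_n}$ into a further limit operator $\hat A_{y'_n}$ while recentering $u_n$. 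Using that the partition of unity $(\varphi_{j,t})$ and the local finiteness constant $N$ make the $L^p_{\nu}$-norm behave additively over the well-separated pieces near the center and near $\partial\Omega$, the escaped profile gets captured by a limit operator whose lower norm is still $\leq d$; since that lower norm is also $\geq d$, the recaptured unit profile forces $\nu(\hat A_{y'}) = d$ exactly. I expect this mass-escape analysis --- ensuring the re-centering yields a genuine element of $\set{\hat A_x : x \in \beta\Omega \setminus \Omega}$ and controlling the norm splitting between captured and escaping mass --- to be the main difficulty, with localization (Proposition \ref{prop10}) and centralization (Lemma \ref{lem5}) serving precisely to reduce it to the corresponding statement for shifts on the model space.
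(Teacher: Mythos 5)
Your first half --- localizing with Proposition \ref{prop10}, centralizing with Lemma \ref{lem5}, and passing to a subnet with $y_n \to y$, $\hat{A}_{y_n} \to \hat{A}_y$ strongly and $g_n \rightharpoonup g$ --- is sound and coincides with the first step ($k=0$) of the paper's argument, and you correctly identify the crux: nothing prevents $\norm{g} < 1$, or even $g = 0$. The problem is that your proposed repair has a genuine gap. Your concentration--compactness scheme only produces a candidate point $y'$ if some recentered copy of $(g_n)$ has a \emph{nonzero} weak limit, i.e.\ if at least one profile of positive mass is captured; the vanishing alternative, in which $g_n$ and every recentering $U_{w_n}^p g_n$ tend weakly to $0$ along every subnet, is never addressed and cannot be excluded. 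Worse, your scheme implicitly proves a strictly stronger statement than Lemma \ref{lem6}: that the \emph{inner} infimum $\nu(\hat{A}_y) = \inf\set{\|\hat{A}_y f\| : \norm{f}=1}$ is attained by an actual unit vector (your ``recaptured unit profile''). That is false in general: lower norms need not be attained. In the parallel discrete theory, which \cite{LiSe} was designed for, a Laurent operator on $\ell^p(\Z)$ whose symbol has modulus attaining its minimum at a single point gives all limit operators equal to the operator itself, so the outer infimum is trivially attained, yet every minimizing sequence vanishes weakly after arbitrary recentering and no minimizing vector exists; any proof that insists on extracting one must fail on such operators. (A secondary, repairable issue: for $p \neq 2$ the mass splitting $\norm{g_n}^p \approx \norm{g}^p + \norm{u_n}^p$ and the asymptotic additivity of $\|\hat{A}_{y_n}g_n\|^p$ over separated pieces are not consequences of weak convergence alone; they need support-separation via the partition of unity and the commutator estimates, which you defer.)

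The paper's proof is built precisely to avoid extracting any weak limit of almost-minimizers. It iterates your localize-and-centralize step: with $\epsilon = 2^{-(k+1)}$ in Proposition \ref{prop10} and radii chosen so that $r_{t_{k+1}} > 2r_{t_k}$, the almost-minimizer for $\hat{A}_{y_n^{k-1}}$, supported in $D(0,r_{t_{n-k+1}})$, is re-localized to the \emph{smaller} radius $r_{t_{n-k}}$ at cost $2^{-n+k}$ and re-centered by Lemma \ref{lem5}, producing $y_n^k$; after $n-l$ steps one has $\nu(\hat{A}_{y_n^{n-l}}|_{D(0,r_{t_l})}) \leq \nu(\hat{A}_{x_n}) + 2^{-l+1}$, and the second part of Lemma \ref{lem5} together with the doubling of the radii shows that undoing all accumulated shifts costs only an enlargement to the \emph{fixed} ball $D(0,4r_{t_l})$, giving $\nu(\hat{A}_{y_n^n}|_{D(0,4r_{t_l})}) \leq \nu(\hat{A}_{x_n}) + 2^{-l+1}$. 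Only localized lower norms over fixed balls remain, and these do pass to the limit along a strongly convergent subnet $\hat{A}_{y^{n_\gamma}_{n_\gamma}} \to \hat{A}_y$, because $P_{\alpha}M_{\chi_{D(0,4r_{t_l})}}$ is compact and Lemma \ref{lem8} turns strong convergence into convergence of $\nu(\,\cdot\,|_{D(0,4r_{t_l})})$. This yields $\nu(\hat{A}_y) \leq d + 2^{-l+1}$ for a single $y$ and every $l$, which is the assertion --- no minimizing vector for $\hat{A}_y$ is ever produced, and none need exist. Salvaging your approach would mean replacing ``capture a unit profile'' by exactly this quantitative re-localization, so it is not a patch of your argument but a different (the paper's) proof.
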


\begin{proof}
Recall $r_t = \sup\limits_{j \in \N} \diam_{\beta}\supp \varphi_{j,t}$. Choosing $\epsilon = 2^{-(k+1)}$ for $k \in \N_0$ in Proposition \ref{prop10}, we obtain a sequence $(t_k)_{k \in \N_0}$ with $\nu_{t_k}(B|_F) \leq \nu(B|_F) + 2^{-(k+1)}$ for all $F \subseteq \Omega$ and $B \in \{\hat{A}\} \cup \set{\hat{A}_x : x \in \beta\Omega \setminus \Omega}$. Without loss of generality we may assume $r_{t_{k+1}} > 2r_{t_k}$ for all $k \in \N_0$. Now choose a sequence $(x_n)_{n \in \N}$ in $\beta\Omega \setminus \Omega$ such that
\[\lim\limits_{n \to \infty} \nu(\hat{A}_{x_n}) = \inf\set{\nu(\hat{A}_x) : x \in \beta\Omega \setminus \Omega}.\]
For every $n \in \N$ we may choose a function $f_n^0 \in L^p_{\nu}$ with $\|f_n^0\| = 1$, $\supp f_n^0$ contained in some $D(w,r_{t_n})$ and
\[\norm{\hat{A}_{x_n}f_n^0} \leq \nu_{t_n}(\hat{A}_{x_n}) + 2^{-(n+1)} \leq \nu(\hat{A}_{x_n}) + 2^{-n}.\]
Using Lemma \ref{lem5}, we obtain a boundary point $y_n^0 \in \beta\Omega \setminus \Omega$ and a function $g_n^0 \in L^p_{\nu}$ with $\|g_n^0\| = 1$, $\supp g_n^0 \subseteq D(0,r_{t_n})$ and
\[\norm{\hat{A}_{y_n^0}g_n^0} = \norm{\hat{A}_{x_n}f_n^0} \leq \nu(\hat{A}_{x_n}) + 2^{-n}.\]
Applying the same argument to $\hat{A}_{y_n^0}|_{D(0,r_{t_n})}$ we obtain a function $f_n^1 \in L^p_{\nu}$ with $\|f_n^1\| = 1$, $\supp f_n^1 \subseteq D(w,r_{t_{n-1}}) \cap D(0,r_{t_n})$ for some $w \in \Omega$ and
\[\norm{\hat{A}_{y_n^0}f_n^1} \leq \nu_{t_{n-1}}(\hat{A}_{y_n^0}|_{D(0,r_{t_n})}) + 2^{-n} \leq \nu(\hat{A}_{y_n^0}|_{D(0,r_{t_n})}) + 2^{-n+1}.\]
It is clear that $w \in D(0,r_{t_n}+r_{t_{n-1}})$, otherwise $D(w,r_{t_{n-1}}) \cap D(0,r_{t_n})$ would be empty. Using Lemma \ref{lem5} again, we can choose a boundary point $y_n^1 \in \beta\Omega \setminus \Omega$ and a function $g_n^1 \in L^p_{\nu}$ with $\|g_n^1\| = 1$, $\supp g_n^1 \subseteq D(0,r_{t_{n-1}})$ and
\[\norm{\hat{A}_{y_n^1}g_n^1} = \norm{\hat{A}_{y_n^0}f_n^1} \leq \nu(\hat{A}_{y_n^0}|_{D(0,r_{t_n})}) + 2^{-n+1}.\]
In particular,
\[\nu(\hat{A}_{y_n^1}|_{D(0,r_{t_{n-1}})}) \leq \nu(\hat{A}_{y_n^0}|_{D(0,r_{t_n})}) + 2^{-n+1} \leq \nu(\hat{A}_{x_n}) + 2^{-n+1} + 2^{-n}.\]

If we iterate this procedure for $k = 2, \ldots, n$, we obtain a boundary point $y_n^k \in \beta\Omega \setminus \Omega$, a midpoint $w \in D(0,r_{t_{n-k+1}} + r_{t_{n-k}})$, a function $f_n^k \in L^p_{\nu}$ with $\norm{f_n^k} = 1$, $\supp f_n^k \subseteq D(w,r_{t_{n-k}}) \cap D(0,r_{t_{n-k+1}})$ and
\[\norm{\hat{A}_{y_n^{k-1}}f_n^k} \leq \nu_{t_{n-k}}(\hat{A}_{y_n^{k-1}}|_{D(0,r_{t_{n-k+1}})}) + 2^{-n+k-1} \leq \nu(\hat{A}_{y_n^{k-1}}|_{D(0,r_{t_{n-k+1}})}) + 2^{-n+k}\]
and a function $g_n^k \in L^p_{\nu}$ with $\|g_n^k\| = 1$, $\supp g_n^k \subseteq D(0,r_{t_{n-k}})$ and
\[\norm{\hat{A}_{y_n^k}g_n^k} = \norm{\hat{A}_{y_n^{k-1}}f_n^k} \leq \nu(\hat{A}_{y_n^{k-1}}|_{D(0,r_{t_{n-k+1}})}) + 2^{-n+k}.\]
Thus, if we combine these estimates, we get
\begin{align*}
\nu(\hat{A}_{y_n^k}|_{D(0,r_{t_{n-k}})}) &\leq \nu(\hat{A}_{y_n^{k-1}}|_{D(0,r_{t_{n-k+1}})}) + 2^{-n+k} \leq \ldots \leq \nu(\hat{A}_{y_n^0}|_{D(0,r_{t_n})}) + 2^{-n+k} + \ldots + 2^{-n+1}\\
&\leq \nu(\hat{A}_{x_n}) + 2^{-n+k} + \ldots + 2^{-n+1} + 2^{-n} \leq \nu(\hat{A}_{x_n}) + 2^{-n+k+1}.
\end{align*}
Fix an integer $l \leq n$ and choose $k = n-l$. Then, by collecting all the shifts being made during the process above and repeatedly applying the second part of Lemma \ref{lem5}, we get
\begin{align*}
\nu(\hat{A}_{y_n^{n-l}}|_{D(0,r_{t_l})}) &\geq \nu(\hat{A}_{y_n^{n-l+1}}|_{D(0,r_{t_l} + r_{t_l} + r_{t_{l-1}})}) = \nu(\hat{A}_{y_n^{n-l+1}}|_{D(0,2r_{t_l} + r_{t_{l-1}})})\\
&\geq \nu(\hat{A}_{y_n^{n-l+2}}|_{D(0,2r_{t_l} + r_{t_{l-1}} + r_{t_{l-1}} + r_{t_{l-2}})}) = \nu(\hat{A}_{y_n^{n-l+2}}|_{D(0,2r_{t_l} + 2r_{t_{l-1}} + r_{t_{l-2}})})\\
&\geq \ldots \geq \nu(\hat{A}_{y_n^n}|_{D(0,2r_{t_l} + 2r_{t_{l-1}} + 2r_{t_{l-2}} + \ldots + 2r_{t_1} + r_{t_0})}) \geq \nu(\hat{A}_{y_n^n}|_{D(0,4r_{t_l})}),
\end{align*}
where we used $r_{t_{k+1}} > 2r_{t_k}$ for the last inequality.

Consider the diagonal sequence defined by $y_n := y_n^n$. Corollary \ref{cor1} implies that the sequence $(A_{y_n})_{n \in \N}$ has a strongly convergent subnet that converges to $A_y$ for some $y \in \beta\Omega \setminus \Omega$. Let us denote this subnet by $(A_{y_{n_{\gamma}}})$. Then
\[\norm{(\hat{A}_{y_{n_{\gamma}}} - \hat{A}_y)M_{\chi_{D(0,4r_{t_l})}}} = \norm{(A_{y_{n_{\gamma}}} - A_y)P_{\alpha}M_{\chi_{D(0,4r_{t_l})}}} \to 0\]
because $P_{\alpha}M_{\chi_{D(0,4r_{t_l})}} \in \Lc(L^p_{\nu})$ is a compact operator (see \cite[Proposition 15]{Hagger}). By Lemma \ref{lem8} we thus obtain $\nu(\hat{A}_{y_{n_{\gamma}}}|_{D(0,4r_{t_l})}) \to \nu(\hat{A}_y|_{D(0,4r_{t_l})})$. It follows
\begin{align*}
\nu(\hat{A}_y) &\leq \nu(\hat{A}_y|_{D(0,4r_{t_l})}) = \lim\limits_{\gamma} \nu(\hat{A}_{y_{n_{\gamma}}}|_{D(0,4r_{t_l})}) \leq \lim\limits_{\gamma} \nu(\hat{A}_{y_{n_{\gamma}}^{n_{\gamma}-l}}|_{D(0,r_{t_l})}) \leq \lim\limits_{\gamma} \nu(\hat{A}_{x_{n_{\gamma}}}) + 2^{-l+1}\\
&= \inf\set{\nu(\hat{A}_x) : x \in \beta\Omega \setminus \Omega} + 2^{-l+1}.
\end{align*}
Since $y$ does not depend on $l$, we get $\nu(\hat{A}_y) = \inf\set{\nu(\hat{A}_x) : x \in \beta\Omega \setminus \Omega}$ as claimed.
\end{proof}

Let us now summarize this section.

\renewcommand*{\themthm}{B}
\begin{mthm} \label{thm4}
Let $A \in \Lc(A^p_{\nu})$ be band-dominated. Then the following are equivalent:
\begin{itemize}
\item[$(i)$] $A$ is Fredholm,
\item[$(ii)$] $A_x$ is invertible and $\norm{A_x^{-1}} \leq \norm{(A + \Kc(A^p_{\nu}))^{-1}}$ for all $x \in \beta\Omega \setminus \Omega$,
\item[$(iii)$] $A_x$ is invertible for all $x \in \beta\Omega \setminus \Omega$ and $\sup\limits_{x \in \beta\Omega \setminus \Omega} \norm{A_x^{-1}} < \infty$,
\item[$(iv)$] $A_x$ is invertible for all $x \in \beta\Omega \setminus \Omega$,
\end{itemize}
\end{mthm}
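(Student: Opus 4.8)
My plan is to establish the four-way equivalence by combining the two structural results already proved, Theorem \ref{thm1} and Lemma \ref{lem6}, with two short refinements. Several links are either trivial or immediately available. The equivalence $(i)\Leftrightarrow(iii)$ is exactly Theorem \ref{thm1}, so I would simply cite it. The implication $(iii)\Rightarrow(iv)$ is immediate by dropping the boundedness clause, and $(ii)\Rightarrow(iii)$ is immediate because the common bound $\norm{(A+\Kc(A^p_{\nu}))^{-1}}$ is a single finite number, so that $\sup_{x\in\beta\Omega\setminus\Omega}\norm{A_x^{-1}}$ is automatically finite. Hence the theorem reduces to two genuine tasks: the sharp bound $(i)\Rightarrow(ii)$, and the removal of the uniform-boundedness hypothesis $(iv)\Rightarrow(iii)$.

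For $(i)\Rightarrow(ii)$ I would refine Proposition \ref{Fredholmness_necessary_condition}. If $A$ is Fredholm, then $A+\Kc(A^p_{\nu})$ is invertible in the Calkin algebra, and an operator $B$ is a Fredholm regularizer of $A$ if and only if $B+\Kc(A^p_{\nu})=(A+\Kc(A^p_{\nu}))^{-1}$; thus the set of regularizers is precisely this coset. Each such $B$ is band-dominated by Proposition \ref{prop0}$(iii)$, and Proposition \ref{Fredholmness_necessary_condition} gives $B_x=A_x^{-1}$ for all $x\in\beta\Omega\setminus\Omega$, whence $\norm{A_x^{-1}}=\norm{B_x}\le\norm{B}$ by Corollary \ref{limit_operator_properties2}. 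Taking the infimum over all representatives of the coset yields $\inf_B\norm{B}=\norm{(A+\Kc(A^p_{\nu}))^{-1}}$, and therefore $\norm{A_x^{-1}}\le\norm{(A+\Kc(A^p_{\nu}))^{-1}}$ for every $x$, which is $(ii)$.

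The substantive step, and the one I expect to be the crux, is $(iv)\Rightarrow(iii)$. Here I would first reduce to $p\le 2$: if $p>2$, Corollary \ref{cor_adjoints} shows that invertibility of all $A_x$ is equivalent to invertibility of all $(A^*)_x$, where $A^*$ is band-dominated on $A^q_{\nu}$ with $q\le 2$. Since $(A^*)_x=T_{\overline{b}_x}^{-1}(A_x)^*T_{\overline{b}_x}$ with $\norm{T_{\overline{b}_x}}$ and $\norm{T_{\overline{b}_x}^{-1}}$ uniformly bounded by $\norm{P_{\nu}}$ (Proposition \ref{prop_T_b_x}), a uniform bound for $\norm{(A^*)_x^{-1}}$ transfers through $\norm{A_x^{-1}}=\norm{(A_x^{-1})^*}\le\norm{P_{\nu}}^2\norm{(A^*)_x^{-1}}$. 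So I assume $p\le 2$ and set $\alpha=(\frac{2}{p}-1)g+\frac{2\nu}{p}$. For each $x$, invertibility of $A_x$ is equivalent to invertibility of $\hat{A}_x=A_xP_{\alpha}+Q_{\alpha}$ on $L^p_{\nu}$, with $\hat{A}_x^{-1}=A_x^{-1}P_{\alpha}+Q_{\alpha}$; restricting to $A^p_{\nu}$ (where $P_{\alpha}$ acts as the identity and $Q_{\alpha}$ as $0$) gives $\hat{A}_x^{-1}|_{A^p_{\nu}}=A_x^{-1}$, so $\norm{A_x^{-1}}\le\norm{\hat{A}_x^{-1}}=\nu(\hat{A}_x)^{-1}$.

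Finally I would feed this into Lemma \ref{lem6}, which produces a point $y\in\beta\Omega\setminus\Omega$ attaining $\nu(\hat{A}_y)=\inf\set{\nu(\hat{A}_x):x\in\beta\Omega\setminus\Omega}$. Under hypothesis $(iv)$ every $\hat{A}_x$ is invertible, so $\nu(\hat{A}_x)>0$ for all $x$; in particular the infimum $\nu(\hat{A}_y)$ is strictly positive. Consequently $\sup_{x}\norm{A_x^{-1}}\le\sup_{x}\nu(\hat{A}_x)^{-1}=\nu(\hat{A}_y)^{-1}<\infty$, which is exactly the uniform bound in $(iii)$. Combining $(i)\Leftrightarrow(iii)$, $(iii)\Rightarrow(iv)$, $(iv)\Rightarrow(iii)$, $(i)\Rightarrow(ii)$ and $(ii)\Rightarrow(iii)$ closes all the loops and proves the equivalence. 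The genuine difficulty is entirely concentrated in $(iv)\Rightarrow(iii)$ and is already discharged by Lemma \ref{lem6}; the remaining work is the bookkeeping translating $\nu(\hat{A}_x)$ into $\norm{A_x^{-1}}$ and the duality reduction for $p>2$.
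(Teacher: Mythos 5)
Your proposal is correct and takes essentially the same route as the paper's proof: $(i)\Leftrightarrow(iii)$ by Theorem \ref{thm1}, the sharp bound in $(ii)$ by taking the infimum of $\norm{B}$ over the coset of Fredholm regularizers via Proposition \ref{Fredholmness_necessary_condition} and Corollary \ref{limit_operator_properties2}, and $(iv)\Rightarrow(iii)$ by reducing to $p\le 2$ via duality, passing to $\hat{A}_x = A_xP_{\alpha}+Q_{\alpha}$, using $\nu(B)=\norm{B^{-1}}^{-1}$ for invertible $B$, and letting Lemma \ref{lem6} supply the attained (hence positive) infimum. The only cosmetic differences are that you cite Corollary \ref{cor_adjoints} where the paper points to Proposition \ref{prop_T_b_x} for the duality reduction, and that you track the (for finiteness irrelevant) constants in the norm transfer explicitly.
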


\begin{proof}
The equivalence of $(i)$ and $(iii)$ is Theorem \ref{thm1}. Moreover, if $B$ is a Fredholm regularizer of $A$, Corollary \ref{limit_operator_properties2} and Proposition \ref{Fredholmness_necessary_condition} imply that the inverses are bounded by $\norm{B}$. As this is true for every regularizer $B$, this means $\norm{A_x^{-1}} \leq \norm{(A + \Kc(A^p_{\nu}))^{-1}}$ for all $x \in \beta\Omega \setminus \Omega$. Hence $(i)$ is also equivalent to $(ii)$.

Clearly, $(iii)$ implies $(iv)$. To show that (iv) implies (iii) it suffices to consider the case $p \leq 2$ (cf.~Proposition \ref{prop_T_b_x}). If $A_x$ is invertible, then $(\hat{A}_x)^{-1}$ is also invertible with $(\hat{A}_x)^{-1} = A_x^{-1}P_{\alpha} + Q_{\alpha}$. Moreover, it holds $\nu(B) = \norm{B^{-1}}^{-1} > 0$ for invertible operators $B$ (see e.g.~\cite[Lemma 2.35]{Lindner} for a quick proof). Therefore we get $\sup\limits_{x \in \beta\Omega \setminus \Omega} \|\hat{A}_x^{-1}\| < \infty$ by Lemma \ref{lem6}. Since $A_x^{-1} = \hat{A}_x^{-1}|_{A^p_{\nu}}$, this implies $(iii)$.
\end{proof}

The following corollary is now immediate.

\begin{cor} \label{cor3}
Let $A \in \Lc(A^p_{\nu})$ be band-dominated. Then
\[\spec_{\ess}(A) = \bigcup\limits_{x \in \beta\Omega \setminus \Omega} \spec(A_x).\]
\end{cor}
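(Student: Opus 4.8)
The plan is to deduce this directly from Theorem \ref{thm4}, whose equivalence $(i) \Leftrightarrow (iv)$ characterizes Fredholmness of a band-dominated operator purely in terms of invertibility of its limit operators. The whole argument amounts to unwinding the definition of the essential spectrum and checking that the limit operators of $A - \lambda I$ are exactly $A_x - \lambda I$, so that Theorem \ref{thm4} applies to $A - \lambda I$ for each scalar $\lambda$.

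First I would fix $\lambda \in \C$ and observe that $A - \lambda I$ is again band-dominated. Indeed, the identity on $A^p_{\nu}$ corresponds to $P_{\nu}$ on $L^p_{\nu}$ in the sense of Definition \ref{BDO_on_A^p}, and $P_{\nu} \in \BDO^p_{\nu}$ by Proposition \ref{Projections_band_dominated}; since $\BDO^p_{\nu}$ is a subalgebra by Proposition \ref{prop0}$(ii)$, the operator $A - \lambda I$ is band-dominated as well. Hence Theorem \ref{thm4} is applicable to $A - \lambda I$ for every $\lambda \in \C$.

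Second, I would compute the limit operators of $A - \lambda I$. Because $(U_z^p)^2 = I$, the net $U_{z_{\gamma}}^p I U_{z_{\gamma}}^p|_{A^p_{\nu}}$ is constantly equal to the identity, so $I_x = I$ for every $x \in \beta\Omega$ by Proposition \ref{prop6}. Combining this with the additivity and homogeneity recorded in Corollary \ref{limit_operator_properties2} gives $(A - \lambda I)_x = A_x - \lambda I_x = A_x - \lambda I$ for all $x \in \beta\Omega$.

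Finally, I would chain the equivalences. By Theorem \ref{thm4} $(i) \Leftrightarrow (iv)$, the operator $A - \lambda I$ fails to be Fredholm if and only if $(A - \lambda I)_x = A_x - \lambda I$ is non-invertible for some $x \in \beta\Omega \setminus \Omega$. Rewriting this through the definitions of $\spec_{\ess}$ and $\spec$ yields $\lambda \in \spec_{\ess}(A) \Leftrightarrow \lambda \in \spec(A_x) \text{ for some } x \in \beta\Omega \setminus \Omega$, and taking the union over $x$ produces the asserted identity. There is no genuine obstacle at this stage, as all the real work resides in Theorem \ref{thm4}; the only point deserving (minor) attention is that passing from invertibility of all $A_x$ to Fredholmness uses the implication $(iv) \Rightarrow (iii)$, i.e.\ that the uniform boundedness of the inverses may be dropped. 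That implication is precisely the nontrivial content already established via Lemma \ref{lem6}, so one simply invokes it rather than reproving it here.
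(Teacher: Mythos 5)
Your proposal is correct and matches the paper's intent exactly: the paper states Corollary \ref{cor3} as an immediate consequence of Theorem \ref{thm4}, and your argument is just the standard unwinding of that theorem applied to $A - \lambda I$, using that $I$ is band-dominated (via $P_{\nu} \in \BDO^p_{\nu}$), that $I_x = I$, and the linearity of $A \mapsto A_x$. The steps you spell out (band-dominatedness of $A-\lambda I$, identification of $(A-\lambda I)_x = A_x - \lambda I$, and the equivalence $(i) \Leftrightarrow (iv)$) are precisely the ones the paper leaves implicit, so there is nothing to add.
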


\section{Applications to Toeplitz operators} \label{applications}

As Toeplitz operators are band-dominated by Proposition \ref{Projections_band_dominated}, we obtain the following important corollary of Theorem \ref{thm4}.

\begin{cor} \label{thm4_Toeplitz}
Let $A \in \Tf_{p,\nu}$. Then $A$ is Fredholm if and only if all of its limit operators are invertible. In particular,
\[\spec_{\ess}(A) = \bigcup\limits_{x \in \beta\Omega \setminus \Omega} \spec(A_x).\]
\end{cor}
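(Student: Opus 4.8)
The plan is to recognize this as an immediate consequence of the band-dominated theory built up in the preceding sections, the only genuine input being that elements of the Toeplitz algebra fall within the scope of that theory. First I would invoke Proposition \ref{Projections_band_dominated} (see also Remark \ref{rem_Toeplitz}) to record that every $A \in \Tf_{p,\nu}$ is band-dominated in the sense of Definition \ref{BDO_on_A^p}, so that the hypotheses of Theorem \ref{thm4} and Corollary \ref{cor3} are satisfied.

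The Fredholm characterization is then precisely the equivalence of conditions $(i)$ and $(iv)$ in Theorem \ref{thm4}: a band-dominated operator is Fredholm if and only if $A_x$ is invertible for every $x \in \beta\Omega \setminus \Omega$, with no separate uniform boundedness assumption required, thanks to the redundancy established via Lemma \ref{lem6}. Applied to $A \in \Tf_{p,\nu}$, this yields exactly the first assertion of the corollary.

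For the essential spectrum formula I would either quote Corollary \ref{cor3} verbatim, or re-derive it in one line: for each $\lambda \in \C$, Corollary \ref{limit_operator_properties2} gives $(A - \lambda I)_x = A_x - \lambda I$, so $A - \lambda I$ is Fredholm precisely when $A_x - \lambda I$ is invertible for all $x \in \beta\Omega \setminus \Omega$, i.e.~when $\lambda \notin \spec(A_x)$ for every such $x$. Taking complements produces $\spec_{\ess}(A) = \bigcup_{x \in \beta\Omega \setminus \Omega} \spec(A_x)$.

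I do not expect any real obstacle here: all the difficulty has been front-loaded into the earlier results, namely the existence of the strong limits defining $A_x$ (Proposition \ref{prop6}), the compactness characterization (Theorem \ref{thm5}), and the Fredholm criterion together with the elimination of the uniform boundedness hypothesis (Theorem \ref{thm1} and Lemma \ref{lem6}). The corollary merely transports these statements, via the membership $\Tf_{p,\nu} \subseteq \BDO^p_{\nu}$ of Proposition \ref{Projections_band_dominated}, to the concrete and historically motivating class of Toeplitz algebra operators.
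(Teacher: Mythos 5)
Your proposal is correct and follows exactly the paper's own route: the paper likewise derives this corollary by citing Proposition \ref{Projections_band_dominated} (cf.~Remark \ref{rem_Toeplitz}) to place $\Tf_{p,\nu}$ inside the band-dominated operators and then invoking Theorem \ref{thm4} and Corollary \ref{cor3}. Your optional one-line re-derivation of the essential spectrum formula via $(A-\lambda I)_x = A_x - \lambda I$ is also exactly how Corollary \ref{cor3} follows from Theorem \ref{thm4}, so there is no substantive difference.
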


In the rest of this section we study some applications of Corollary \ref{thm4_Toeplitz} for particular Toeplitz operators. First, we observe the following:

\begin{prop} \label{prop_shifted_Toeplitz}
For $f \in L^{\infty}(\Omega)$ and $z \in \Omega$ we have $U_z^pT_fU_z^p = T_{b_z}^{-1}T_{(f \circ \phi_z)b_z}$.
\end{prop}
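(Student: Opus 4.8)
The plan is to reduce the identity to the standard intertwining relation for $T_f$ under the symmetry $\phi_z$, and then verify that relation by a change of variables. Since $T_{b_z}$ is invertible by Proposition \ref{prop_T_b_x}, the claim $U_z^pT_fU_z^p = T_{b_z}^{-1}T_{(f\circ\phi_z)b_z}$ is, after multiplying on the left by $T_{b_z}$, equivalent to $T_{b_z}U_z^pT_fU_z^p = T_{(f\circ\phi_z)b_z}$. Proposition \ref{prop_T_b_x} gives $T_{b_z}U_z^p|_{A^p_\nu} = (U_z^q|_{A^q_\nu})^*$, so everything comes down to the single intertwining identity
\[(U_z^q|_{A^q_\nu})^*\,T_f\,U_z^p = T_{(f\circ\phi_z)b_z} \qquad \text{on } A^p_\nu.\]
This is the exact analogue, in the present convention, of the familiar equivalence $U_z^2T_fU_z^2 = T_{f\circ\phi_z}$ on $A^2_\nu$; the factor $b_z$ measures precisely the failure of $U_z^p$ and $U_z^q$ to be mutually adjoint when $p \neq 2$.

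To prove this identity I would test it against the dual space. Fix $g\in A^p_\nu$ and $\psi\in A^q_\nu$ and use the dual pairing $\langle F,\Psi\rangle = \int_\Omega F\,\overline{\Psi}\,\mathrm{d}v_\nu$ realizing $(A^p_\nu)^*\cong A^q_\nu$. By the definition of the Banach-space adjoint, and since $U_z^q\psi\in A^q_\nu$ is holomorphic so that the projection hidden in $T_f$ may be dropped ($P_\nu$ being formally self-adjoint),
\[\langle (U_z^q|_{A^q_\nu})^*T_fU_z^pg,\psi\rangle = \langle T_fU_z^pg,\,U_z^q\psi\rangle = \int_\Omega f(w)\,(U_z^pg)(w)\,\overline{(U_z^q\psi)(w)}\,\mathrm{d}v_\nu(w).\]
Into this I insert the definitions of $U_z^p$ and $U_z^q$ and substitute $w=\phi_z(y)$, the Jacobian being supplied by property $(viii)$. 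The two arguments $h(\phi_z(y),z)$ and $h(z,\phi_z(y))$ produced by the substitution are evaluated with property $(vii)$: since $\phi_z$ interchanges $0$ and $z$ and $h(\cdot,0)\equiv1$ by $(iii)$, one gets $h(\phi_z(y),z)=h(z,z)/h(y,z)$, and taking conjugates via $(iv)$ gives $h(z,\phi_z(y))=h(z,z)/h(z,y)$.

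The only real difficulty is the bookkeeping of the exponents of $h(z,z)$, $h(y,z)$ and $h(z,y)$. I expect all powers of $h(z,z)$ to cancel, the relevant exponents combining through $\tfrac1p+\tfrac1q=1$, while the surviving factors collapse to $h(y,z)^{(\frac1p-\frac1q)(\nu+g)}h(z,y)^{(\frac1q-\frac1p)(\nu+g)}$, which is exactly $b_z(y)$. The integral then reads $\int_\Omega (f\circ\phi_z)(y)\,b_z(y)\,g(y)\,\overline{\psi(y)}\,\mathrm{d}v_\nu(y) = \langle M_{(f\circ\phi_z)b_z}g,\psi\rangle = \langle T_{(f\circ\phi_z)b_z}g,\psi\rangle$, the last equality again because $\psi$ is holomorphic. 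As $g$ and $\psi$ were arbitrary, the intertwining identity follows, and with it the proposition. One could instead compute $U_z^pP_\nu U_z^p$ as an explicit integral operator in the manner of Proposition \ref{interchange_prop}, but routing the proof through the adjoint relation of Proposition \ref{prop_T_b_x} keeps the computation shorter and sidesteps this auxiliary (non-orthogonal) projection.
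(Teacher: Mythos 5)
Your proof is correct, and it verifies the key identity by a genuinely different mechanism than the paper. Both arguments make the same initial reduction: since $T_{b_z}$ is invertible (Proposition \ref{prop_T_b_x}), it suffices to show $T_{b_z}U_z^pT_fU_z^p = T_{(f\circ\phi_z)b_z}$. The paper then works pointwise: it computes $U_z^pT_fU_z^p$ as an explicit integral operator via the substitution $x=\phi_z(y)$, applies $T_{b_z}$, and collapses the resulting double integral using the (conjugated) reproducing property of the kernel $h(\cdot,\cdot)^{-\nu-g}$. You instead note that $T_{b_z}U_z^p|_{A^p_\nu} = (U_z^q|_{A^q_\nu})^*$ --- immediate from the definition $T_{b_z}=(U_z^q|_{A^q_\nu})^*U_z^p|_{A^p_\nu}$ and $(U_z^p|_{A^p_\nu})^{-1}=U_z^p|_{A^p_\nu}$ --- and verify $(U_z^q|_{A^q_\nu})^*T_fU_z^p = T_{(f\circ\phi_z)b_z}$ weakly, pairing against $A^q_\nu$, dropping $P_\nu$ on both sides by its formal self-adjointness, and performing a single change of variables. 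Your exponent bookkeeping is exactly right: $h(\phi_z(y),z)=h(z,z)/h(y,z)$ together with property $(viii)$ makes the $h(z,z)$-powers cancel via $\frac1p+\frac1q=1$, leaving $h(y,z)^{(\frac1p-\frac1q)(\nu+g)}h(z,y)^{(\frac1q-\frac1p)(\nu+g)}=b_z(y)$, exactly as you claim. What your route buys: one substitution instead of two displayed computations, no double integral, no explicit appeal to the reproducing property, and a conceptual reading of $b_z$ as the defect between $U_z^p$ and $(U_z^q|_{A^q_\nu})^*$. What the paper's route buys: it never leaves $A^p_\nu$ (no duality, hence no care needed with the conjugate-linear identification $(A^q_\nu)^*\cong A^p_\nu$), and it produces as a by-product the explicit integral formula for $U_z^pT_fU_z^p$. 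Both arguments rest on the same branch conventions for the fractional powers of $h$ and on the standing admissibility assumption (boundedness of $P_\nu$ on $L^p_\nu$ and $L^q_\nu$, and Fubini via the Rudin--Forelli estimates), so neither is more nor less rigorous than the other.
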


\begin{proof}
With the usual tricks we get
\begin{align*}
(U_z^pT_fU_z^p g)(w) &= \frac{h(z,z)^{\frac{\nu+g}{p}}}{h(w,z)^{\frac{2(\nu+g)}{p}}} \int_{\Omega} g(\phi_z(x)) \frac{h(z,z)^{\frac{\nu+g}{p}}}{h(x,z)^{\frac{2(\nu+g)}{p}}} f(x)h(\phi_z(w),x)^{-\nu-g} \, \mathrm{d}v_{\nu}(x)\\
&= h(w,z)^{(\frac{1}{q} - \frac{1}{p})(\nu+g)} \int_{\Omega} g(y)f(\phi_z(y)) h(y,z)^{(\frac{1}{p} - \frac{1}{q})(\nu+g)} h(w,y)^{-\nu-g} \, \mathrm{d}v_{\nu}(y)
\end{align*}
and hence
\begin{align*}
(T_{b_z}U_z^pT_fU_z^p g)(x) &= \int_{\Omega}\int_{\Omega} g(y)f(\phi_z(y)) \frac{h(z,w)^{(\frac{1}{q} - \frac{1}{p})(\nu+g)}}{h(y,z)^{(\frac{1}{q} - \frac{1}{p})(\nu+g)}} h(w,y)^{-\nu-g} h(x,w)^{-\nu-g} \, \mathrm{d}v_{\nu}(y) \, \mathrm{d}v_{\nu}(w)\\
&= \int_{\Omega} g(y)f(\phi_z(y)) \frac{h(z,y)^{(\frac{1}{q} - \frac{1}{p})(\nu+g)}}{h(y,z)^{(\frac{1}{q} - \frac{1}{p})(\nu+g)}} h(x,y)^{-\nu-g} \, \mathrm{d}v_{\nu}(y)\\
&= (T_{(f \circ \phi_z)b_z}g)(x)
\end{align*}
for all $g \in A^p_{\nu}$ and $x,z \in \Omega$.
\end{proof}

Combining Proposition \ref{prop_shifted_Toeplitz} with Proposition \ref{prop_T_b_x}, we get the following corollary.

\begin{cor} \label{cor_Toeplitz_limits}
Let $f \in L^{\infty}(\Omega)$, $x \in \beta\Omega \setminus \Omega$ and $(z_{\gamma})$ a net in $\Omega$ that converges to $x$. Further assume that $f \circ \phi_{z_{\gamma}}$ converges to a function $g \in L^{\infty}(\Omega)$ uniformly on compact subsets of $\Omega$. Then $(T_f)_x = T_{b_x}^{-1}T_{g \cdot b_x}$.
\end{cor}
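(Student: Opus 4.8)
The plan is to realize both sides of the claimed identity as strong limits of the same net and then invoke uniqueness of strong limits. Since $T_f$ is band-dominated by Proposition \ref{Projections_band_dominated} (see also Remark \ref{rem_Toeplitz}), Proposition \ref{prop6} gives that $U_{z_\gamma}^p T_f U_{z_\gamma}^p|_{A^p_\nu}$ converges strongly to $(T_f)_x$. On the other hand, Proposition \ref{prop_shifted_Toeplitz} rewrites this net as $T_{b_{z_\gamma}}^{-1} T_{(f \circ \phi_{z_\gamma}) b_{z_\gamma}}$, so it remains to show that this product converges strongly to $T_{b_x}^{-1} T_{g \cdot b_x}$.

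First I would handle the right-hand factor. The symbols $(f\circ\phi_{z_\gamma}) b_{z_\gamma}$ are uniformly bounded in $L^\infty$: indeed $\norm{f\circ\phi_{z_\gamma}}_\infty = \norm{f}_\infty$ since $\phi_{z_\gamma}$ is a biholomorphism of $\Omega$ preserving null sets, and $\abs{b_{z_\gamma}} \equiv 1$ by the explicit formula for $b_z$ in Proposition \ref{prop_T_b_x}. By hypothesis $f \circ \phi_{z_\gamma} \to g$ uniformly on compact subsets of $\Omega$, and $b_{z_\gamma} \to b_x$ uniformly on compact subsets as well (this was established in the proof of Proposition \ref{prop_T_b_x}), so the products converge uniformly on compacta to $g \cdot b_x$. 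I would then record the elementary fact that a uniformly bounded net of symbols converging uniformly on compact subsets of $\Omega$ induces a strongly convergent net of Toeplitz operators: for fixed $h \in A^p_\nu$ one splits $\norm{(s_\gamma - s)h}_{L^p_\nu}^p$ over a Bergman ball $D(0,R)$ and its complement, using uniform convergence on the (relatively compact) ball and the tail estimate $\int_{\Omega \setminus D(0,R)} \abs{h}^p \, \mathrm{d}v_\nu \to 0$ together with the uniform bound on the complement; boundedness of $P_\nu$ then yields $T_{s_\gamma}h \to T_s h$. Applying this with $s_\gamma = (f\circ\phi_{z_\gamma}) b_{z_\gamma}$ and $s = g \cdot b_x$ shows $T_{(f\circ\phi_{z_\gamma}) b_{z_\gamma}} \to T_{g \cdot b_x}$ strongly.

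Finally I would combine the two factors. By Proposition \ref{prop_T_b_x} we have $T_{b_{z_\gamma}}^{-1} \to T_{b_x}^{-1}$ strongly with the uniform bound $\sup_\gamma \norm{T_{b_{z_\gamma}}^{-1}} \leq \norm{P_\nu}$. For a net of products $S_\gamma R_\gamma$ with $S_\gamma \to S$ strongly, $R_\gamma \to R$ strongly and $\sup_\gamma \norm{S_\gamma} < \infty$, the decomposition $S_\gamma R_\gamma h - SRh = S_\gamma(R_\gamma - R)h + (S_\gamma - S)(Rh)$ shows $S_\gamma R_\gamma \to SR$ strongly. Hence $T_{b_{z_\gamma}}^{-1} T_{(f\circ\phi_{z_\gamma}) b_{z_\gamma}}$ converges strongly to $T_{b_x}^{-1} T_{g \cdot b_x}$, and uniqueness of strong limits gives $(T_f)_x = T_{b_x}^{-1} T_{g \cdot b_x}$.

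The only genuinely non-formal step is the strong convergence of Toeplitz operators from uniform-on-compacta convergence of uniformly bounded symbols; everything else is bookkeeping with results already established. I expect the main obstacle to be organizing this lemma cleanly, in particular making sure the uniform bounds $\abs{b_{z_\gamma}} \equiv 1$ and $\norm{f \circ \phi_{z_\gamma}}_\infty = \norm{f}_\infty$ are in place so that the tail term is controlled, after which the product-of-strong-limits argument is immediate.
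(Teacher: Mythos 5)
Your proposal is correct and is essentially the paper's own argument: the paper obtains this corollary precisely by combining Proposition \ref{prop_shifted_Toeplitz} with Propositions \ref{prop6} and \ref{prop_T_b_x}, which is exactly your factorization $U_{z_\gamma}^p T_f U_{z_\gamma}^p|_{A^p_\nu} = T_{b_{z_\gamma}}^{-1} T_{(f\circ\phi_{z_\gamma})b_{z_\gamma}}$ followed by passing to strong limits in each factor and invoking uniqueness of strong limits. The auxiliary lemma you isolate (a uniformly bounded net of symbols converging uniformly on compacta induces strongly convergent Toeplitz operators) is the same tool the paper itself uses inside the proof of Proposition \ref{prop_T_b_x} to get $T_{b_{z_\gamma}} \to T_{b_x}$, so nothing in your route differs in substance.
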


For $p = 2$ this simplifies to $(T_f)_x = T_g$. Moreover, if $f$ is uniformly continuous with respect to the Bergman metric $\beta$, the condition in Corollary \ref{cor_Toeplitz_limits} is always satisfied. The set of bounded and uniformly continuous functions $f \from \Omega \to \C$ will be denoted by $\BUC(\Omega)$.

\begin{prop} \label{prop_BUC_limits}
Let $f \in \BUC(\Omega)$ and $x \in \beta\Omega \setminus \Omega$. Then $(T_f)_x = T_{b_x}^{-1}T_{g \cdot b_x}$, where $g \in \BUC(\Omega)$ is the pointwise limit of the net $(f \circ \phi_{z_{\gamma}})$.
\end{prop}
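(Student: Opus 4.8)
The plan is to reduce everything to Corollary \ref{cor_Toeplitz_limits}, whose only non-trivial hypothesis is that $f \circ \phi_{z_{\gamma}}$ converges to some $g \in L^{\infty}(\Omega)$ uniformly on compact subsets of $\Omega$. So I would fix a net $(z_{\gamma})$ in $\Omega$ converging to $x$ (such a net exists since $\Omega$ is dense in $\beta\Omega$), and the whole task becomes: produce the limit function $g$, show it lies in $\BUC(\Omega)$, and upgrade pointwise convergence to locally uniform convergence.

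First I would establish the existence of the pointwise limit. For fixed $w \in \Omega$ consider $h_w \from \Omega \to \C$, $h_w(z) := f(\phi_z(w))$. By Lemma \ref{lem2} the map $z \mapsto \phi_z(w)$ is continuous, and $f$ is continuous and bounded, so $h_w$ is a bounded continuous function on $\Omega$. Since $\beta\Omega$ is the Stone-\v{C}ech compactification, i.e.~the maximal ideal space of the bounded continuous functions, $h_w$ extends continuously to $\beta\Omega$; hence $g(w) := \lim_{\gamma} f(\phi_{z_{\gamma}}(w))$ exists and equals the value of that extension at $x$. In particular $g$ is independent of the chosen net and $\norm{g}_{\infty} \leq \norm{f}_{\infty}$.

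Next I would show $g \in \BUC(\Omega)$, simultaneously setting up equicontinuity. Because each $\phi_{z_{\gamma}}$ is a $\beta$-isometry we have $\beta(\phi_{z_{\gamma}}(u),\phi_{z_{\gamma}}(v)) = \beta(u,v)$ for all $u,v \in \Omega$, so the family $\set{f \circ \phi_{z_{\gamma}}}$ shares the modulus of continuity of $f$: if $\beta(u,v) < \delta$ forces $\abs{f(u') - f(v')} < \epsilon$ whenever $\beta(u',v')<\delta$, then $\abs{f(\phi_{z_{\gamma}}(u)) - f(\phi_{z_{\gamma}}(v))} < \epsilon$ for every $\gamma$. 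Letting $\gamma$ run yields the same estimate for $g$, so $g$ is bounded and uniformly continuous, i.e.~$g \in \BUC(\Omega)$.

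Finally I would upgrade to uniform convergence on compacta by the standard Arzel\`a--Ascoli argument: given compact $C \subseteq \Omega$ and $\epsilon > 0$, choose $\delta$ from the common modulus of continuity, cover $C$ by finitely many $\beta$-balls of radius $\delta$ centered at $c_1,\dots,c_m$, and use pointwise convergence at the finitely many centers to find $\gamma_0$ with $\abs{f(\phi_{z_{\gamma}}(c_i)) - g(c_i)} < \epsilon$ for $\gamma \geq \gamma_0$ and all $i$. A three-term triangle inequality (equicontinuity of $f \circ \phi_{z_{\gamma}}$, the estimate at the nearest center, and uniform continuity of $g$) then gives $\abs{f(\phi_{z_{\gamma}}(w)) - g(w)} < 3\epsilon$ uniformly for $w \in C$ and $\gamma \geq \gamma_0$. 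Thus $f \circ \phi_{z_{\gamma}} \to g$ uniformly on compact subsets, and Corollary \ref{cor_Toeplitz_limits} yields $(T_f)_x = T_{b_x}^{-1}T_{g \cdot b_x}$. The main obstacle, and really the only point requiring care, is this passage from pointwise to locally uniform convergence; it hinges entirely on the equicontinuity furnished by the isometry property of the $\phi_{z_{\gamma}}$, which is precisely why uniform continuity with respect to $\beta$ (rather than merely continuity) is the correct hypothesis on $f$.
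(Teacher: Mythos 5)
Your proposal is correct and follows essentially the same route as the paper's own proof: pointwise convergence via the Stone-\v{C}ech extension of $z \mapsto f(\phi_z(w))$, uniform continuity of $g$ from the isometry property of the $\phi_{z_\gamma}$, and the same finite-cover/triangle-inequality argument to upgrade to uniform convergence on compact sets before invoking Corollary \ref{cor_Toeplitz_limits}. No gaps; the only difference is that you make the reduction to Corollary \ref{cor_Toeplitz_limits} explicit, which the paper leaves implicit.
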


\begin{proof}
As $z \mapsto \phi_z(w)$ is a continuous function by Lemma \ref{lem2}, $z \mapsto f(\phi_z(w))$ is a bounded and continuous function for every $w \in \Omega$. Therefore there is a unique extension to the Stone-\v{C}ech compactification $\beta\Omega$. In particular, for every convergent net $(z_{\gamma})$ in $\Omega$ the net $f \circ \phi_{z_{\gamma}}$ converges pointwise to a bounded function $g$. As
\[\abs{g(w)-g(y)} = \lim\limits_{\gamma} \abs{f(\phi_{z_{\gamma}}(w)) - f(\phi_{z_{\gamma}}(y))}\]
and $\beta(\phi_{z_{\gamma}}(w),\phi_{z_{\gamma}}(y)) = \beta(w,y)$, $g$ is uniformly continuous just like $f$. It remains to show that $f \circ \phi_{z_{\gamma}} \to g$ uniformly on compact sets. So let $K \subset \Omega$ be compact and $\epsilon > 0$. As $f$ and $g$ are uniformly continuous, there is a $\delta > 0$ such that $\abs{f(w) - f(y)} < \epsilon$ and $\abs{g(w) - g(y)} < \epsilon$ whenever $\beta(w,y) < \delta$. Moreover, there is a finite set $\set{w_1, \ldots, w_k}$ such that $\min\limits_{j = 1, \ldots, k} \beta(w,w_j) < \delta$ for all $w \in K$. Now choose $\gamma$ sufficiently large such that $\abs{f(\phi_{z_{\gamma}}(w_j)) - g(w_j)} < \epsilon$ for all $j = 1, \ldots, k$. It follows
\[\abs{f(\phi_{z_{\gamma}}(w)) - g(w)} \leq \abs{f(\phi_{z_{\gamma}}(w)) - f(\phi_{z_{\gamma}}(w_j))} + \abs{f(\phi_{z_{\gamma}}(w_j)) - g(w_j)} + \abs{g(w_j)-g(w)} < 3\epsilon\]
for all $w \in K$, where $j$ is obviously chosen in such a way that $\beta(\phi_{z_{\gamma}}(w),\phi_{z_{\gamma}}(w_j)) = \beta(w,w_j) < \delta$, respectively. As $\epsilon$ was arbitrary, the conclusion follows.
\end{proof}

Proposition \ref{prop_BUC_limits} gets particularly simple if $g$ happens to be a constant function. As we will show next, this is the case if and only if the Berezin transform $\Bc(T_f)$ is of vanishing oscillation at the boundary $\partial\Omega$. For a bounded continuous function $f$, we define its oscillation at a point $z \in \Omega$ as
\[\Osc_z(f) := \sup\set{\abs{f(z) - f(w)} : w \in \Omega, \beta(z,w) \leq 1}.\]
We say that $f$ is of vanishing oscillation at the boundary, $f \in \VO_{\partial}(\Omega)$, if $\Osc_z(f) \to 0$ as $z \to \partial\Omega$.

\begin{thm} \label{thm7}
Let $A \in \Lc(A^p_{\nu})$ be band-dominated. Then $A_x$ is a multiple of the identity for every $x \in \beta\Omega \setminus \Omega$ if and only if $\Bc(A) \in \VO_{\partial}(\Omega)$. In this case $A_x = (\Bc(A))(x) \cdot I$.
\end{thm}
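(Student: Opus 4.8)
The plan is to reduce everything to the pointwise identity \eqref{Berezin_transform_shifted_operator} established in the proof of Theorem \ref{thm6}, which for every $z \in \Omega$ reads $(\Bc(A_zT_{b_z}^{-1}))(\zeta) = b_z(\zeta)^{-1}(\Bc(A))(\phi_z(\zeta))$, where $A_zT_{b_z}^{-1} = U_z^pA(U_z^q|_{A^q_\nu})^*$. Two elementary observations feed into this. First, $\Bc(I) \equiv 1$ (a direct consequence of the reproducing property of $h(\cdot,\cdot)^{-\nu-g}$), so applying the identity to $A = I$ (for which $I_z = I$) gives $(\Bc(T_{b_z}^{-1}))(\zeta) = b_z(\zeta)^{-1}$, and passing to the limit via Proposition \ref{prop_T_b_x} yields $(\Bc(T_{b_x}^{-1}))(\zeta) = b_x(\zeta)^{-1}$ for $x \in \beta\Omega \setminus \Omega$. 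Second, $b_z(0) = 1$ for all $z$, since $h(z,0) = h(0,z) = 1$ by property $(iii)$; hence $b_x(0) = 1$. The case $\zeta = 0$ of the identity then gives the always-valid formula $(\Bc(A))(x) = (\Bc(A_xT_{b_x}^{-1}))(0)$, where $\Bc(A)$ is read via its continuous extension to $\beta\Omega$.

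For the direction $\Bc(A) \in \VO_\partial(\Omega) \Rightarrow A_x$ is a multiple of $I$, I would fix $x \in \beta\Omega \setminus \Omega$ and a net $(z_\gamma)$ with $z_\gamma \to x$. Since $\beta(\phi_{z_\gamma}(\zeta), z_\gamma) = \beta(\phi_{z_\gamma}(\zeta), \phi_{z_\gamma}(0)) = \beta(\zeta, 0)$ is a fixed finite number and $z_\gamma \to \partial\Omega$, vanishing oscillation (chained along a Bergman geodesic) forces $(\Bc(A))(\phi_{z_\gamma}(\zeta)) - (\Bc(A))(z_\gamma) \to 0$, so $(\Bc(A))(\phi_{z_\gamma}(\zeta)) \to (\Bc(A))(x)$ for every fixed $\zeta$. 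Feeding this and $b_{z_\gamma}(\zeta) \to b_x(\zeta)$ into \eqref{Berezin_transform_shifted_operator} (the left side converges at each $\zeta$ because $A_{z_\gamma}T_{b_{z_\gamma}}^{-1} \to A_xT_{b_x}^{-1}$ strongly by Propositions \ref{prop6} and \ref{prop_T_b_x}) yields $(\Bc(A_xT_{b_x}^{-1}))(\zeta) = (\Bc(A))(x)\,b_x(\zeta)^{-1} = (\Bc(A))(x)\,(\Bc(T_{b_x}^{-1}))(\zeta)$. By injectivity of the Berezin transform this gives $A_xT_{b_x}^{-1} = (\Bc(A))(x)\,T_{b_x}^{-1}$, and since $T_{b_x}^{-1}$ is invertible we conclude $A_x = (\Bc(A))(x)\,I$, proving the implication and the final formula at once.

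For the converse, assume each $A_x = c_xI$. The formula above gives $(\Bc(A))(x) = (\Bc(c_xT_{b_x}^{-1}))(0) = c_x\,b_x(0)^{-1} = c_x$, identifying the constant, so it remains to prove $\Bc(A) \in \VO_\partial(\Omega)$. I would argue by contradiction: if the oscillation fails to vanish, there are $\epsilon > 0$, points $w_n \to \partial\Omega$ and $w_n'$ with $\beta(w_n, w_n') \le 1$ and $\abs{(\Bc(A))(w_n) - (\Bc(A))(w_n')} \ge \epsilon$. Writing $w_n = \phi_{w_n}(0)$ and $w_n' = \phi_{w_n}(\zeta_n)$ with $\zeta_n := \phi_{w_n}(w_n') \in \overline{D(0,1)}$, the identity rewrites the difference as $(\Bc(A_{w_n}T_{b_{w_n}}^{-1}))(0) - b_{w_n}(\zeta_n)(\Bc(A_{w_n}T_{b_{w_n}}^{-1}))(\zeta_n)$. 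Passing to a subnet with $w_n \to x \in \beta\Omega \setminus \Omega$ in $\beta\Omega$ and $\zeta_n \to \zeta_0 \in \overline{D(0,1)}$, and using $A_{w_n}T_{b_{w_n}}^{-1} \to A_xT_{b_x}^{-1} = c_xT_{b_x}^{-1}$ strongly together with $b_{w_n}(\zeta_n) \to b_x(\zeta_0)$, both terms tend to $c_x$, so the difference tends to $0$, contradicting the lower bound $\epsilon$.

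The main obstacle is precisely this last convergence: it is not mere pointwise-in-$\zeta$ continuity, since the base points $\zeta_n$ move while the operators $A_{w_n}T_{b_{w_n}}^{-1}$ change. I would dispatch it with a joint-continuity statement — if $B_\gamma \to B$ strongly with $\sup_\gamma \norm{B_\gamma} < \infty$ and $\zeta_\gamma \to \zeta_0$ in $\Omega$, then $(\Bc(B_\gamma))(\zeta_\gamma) \to (\Bc(B))(\zeta_0)$ — which follows once one knows that $\zeta \mapsto k_\zeta^{(p)}$ and $\zeta \mapsto k_\zeta^{(q)}$ are norm-continuous (a Rudin--Forelli/dominated-convergence estimate, uniform over the compact set $\overline{D(0,1)}$) and that $\set{A_wT_{b_w}^{-1} : w \in \Omega}$ is uniformly bounded by $\norm{A}\norm{P_\nu}$. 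Equivalently, one can replace the subnet extraction by equicontinuity: the functions $\Bc(A_wT_{b_w}^{-1})$ share a common modulus of Bergman-continuity depending only on $\norm{A}\norm{P_\nu}$, so pointwise convergence upgrades to uniform convergence on $\overline{D(0,1)}$, delivering the uniformity in $\zeta$ directly.
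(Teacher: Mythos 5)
Your proposal is correct and follows the paper's proof in all essentials: the same key identity \eqref{Berezin_transform_shifted_operator}, the same $A = I$ trick to obtain $(\Bc(T_{b_x}^{-1}))(\zeta) = b_x(\zeta)^{-1}$, the same chaining of the oscillation to pass from $\zeta \in \overline{D(0,1)}$ to arbitrary $\zeta$, the same injectivity-of-$\Bc$ argument for the forward direction, and the same contradiction scheme (sequences $w_n \to \partial\Omega$ at bounded Bergman distance, pulled back into $\overline{D(0,1)}$ via $\phi_{w_n}$, then a subnet in $\beta\Omega$) for the converse.

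The one point where you genuinely diverge is the step you correctly identify as the main obstacle: the evaluation points $\zeta_n$ move while the operators converge only strongly. The paper sidesteps this rather than confronting it: it first establishes the pointwise limit $\lim_{\gamma}(\Bc(A))(\phi_{z_\gamma}(\zeta)) = \lambda_x$ at the \emph{fixed} limit point $\zeta$ of the net $(\phi_{z_\gamma}(w_\gamma))$, and then compares $(\Bc(A))(w_\gamma)$ with $(\Bc(A))(\phi_{z_\gamma}(\zeta))$ directly, using that $\beta(w_\gamma,\phi_{z_\gamma}(\zeta)) = \beta(\phi_{z_\gamma}(w_\gamma),\zeta) \to 0$ together with the uniform continuity of $\Bc(A)$ with respect to the Bergman metric (a fact recorded in Section \ref{compactness}). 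You instead keep the moving points and prove a joint-continuity lemma: if $B_\gamma \to B$ strongly with $\sup_\gamma \norm{B_\gamma} < \infty$ and $\zeta_\gamma \to \zeta_0$, then $(\Bc(B_\gamma))(\zeta_\gamma) \to (\Bc(B))(\zeta_0)$. This lemma is true and your justification is sound — the three-term splitting works given the uniform bound $\norm{A_wT_{b_w}^{-1}} \leq \norm{A}\norm{P_\nu}$ (which follows from $(U_w^q|_{A^q_\nu})^* = T_{b_w}U_w^p|_{A^p_\nu}$ and $\abs{b_w} \equiv 1$) and the norm-continuity of $\zeta \mapsto k_\zeta^{(p)}, k_\zeta^{(q)}$, which indeed follows by dominated convergence since $\abs{h(w,\zeta)}$ is bounded below for $w \in \overline{\Omega}$ and $\zeta$ in a compact subset of $\Omega$. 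In substance the two devices are equivalent — via \eqref{Berezin_transform_shifted_operator}, equicontinuity of the family $\set{\Bc(A_wT_{b_w}^{-1})}$ on $\overline{D(0,1)}$ is essentially a reformulation of the Bergman-uniform continuity of $\Bc(A)$ — so your route costs one extra (standard) kernel-continuity estimate not recorded in the paper, but buys a self-contained limit argument that does not invoke the unproved uniform-continuity remark. Either way the proof goes through.
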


\begin{proof}
Assume that $\Bc(A)$ has vanishing oscillation at the boundary and let $(z_{\gamma})$ be a net in $\Omega$ that converges to $x \in \beta\Omega \setminus \Omega$. By Equation \eqref{Berezin_transform_shifted_operator}, which does not require compactness of $A$, we have
\begin{align*}
(\Bc(A_xT_{b_x}^{-1}))(\zeta) &= \lim\limits_{z_{\gamma} \to x} (\Bc(A_{z_{\gamma}}T_{b_{z_{\gamma}}}^{-1}))(\zeta) = \lim\limits_{z_{\gamma} \to x} b_{z_{\gamma}}(\zeta)^{-1}(\Bc(A))(\phi_{z_{\gamma}}(\zeta))\\
&= b_x(\zeta)^{-1} \lim\limits_{z_{\gamma} \to x} (\Bc(A))(\phi_{z_{\gamma}}(\zeta))
\end{align*}
for every $\zeta \in \Omega$. In particular, setting $\zeta = 0$ and using that $b_z(0) = 1$ for all $z \in \Omega$, we get
\begin{equation} \label{thm7_eq0}
(\Bc(A_xT_{b_x}^{-1}))(0) = \lim\limits_{z_{\gamma} \to x} (\Bc(A))(\phi_{z_{\gamma}}(0)) = \lim\limits_{z_{\gamma} \to x} (\Bc(A))({z_{\gamma}}) = (\Bc(A))(x).
\end{equation}
Now let $\zeta \in \overline{D(0,1)}$, i.e.~$\beta(0,\zeta) \leq 1$. Then
\begin{align*}
\lim\limits_{z_{\gamma} \to x} \abs{(\Bc(A))(\phi_{z_{\gamma}}(\zeta)) - (\Bc(A))(x)} &\leq \lim\limits_{z_{\gamma} \to x} \abs{(\Bc(A))(\phi_{z_{\gamma}}(\zeta)) - (\Bc(A))(\phi_{z_{\gamma}}(0))}\\
&\qquad + \lim\limits_{z_{\gamma} \to x} \abs{(\Bc(A))(\phi_{z_{\gamma}}(0)) - (\Bc(A))(x)}\\
&\leq \lim\limits_{z_{\gamma} \to x} \Osc_{z_{\gamma}}(\Bc(A))\\
&= 0
\end{align*}
because of Equation \eqref{thm7_eq0} and $\beta(\phi_{z_{\gamma}}(0),\phi_{z_{\gamma}}(\zeta)) = \beta(0,\zeta) \leq 1$. By repeating this argument, we see that this generalizes to all $\zeta \in \Omega$. Thus
\begin{equation} \label{thm7_eq1}
(\Bc(A_xT_{b_x}^{-1}))(\zeta) = b_x(\zeta)^{-1} (\Bc(A))(x)
\end{equation}
for all $\zeta \in \Omega$. Now consider the case $A = I$. If $A = I$, then obviously $A_x = I_x = I$ for all $x \in \beta\Omega \setminus \Omega$ and hence
\begin{equation} \label{thm7_eq2}
(\Bc(T_{b_x}^{-1}))(\zeta) = (\Bc(I_xT_{b_x}^{-1}))(\zeta) = b_x(\zeta)^{-1} (\Bc(I))(x) = b_x(\zeta)^{-1}
\end{equation}
for all $\zeta \in \Omega$. Combining \eqref{thm7_eq1} and \eqref{thm7_eq2}, we get
\[\Bc\left(\left(A_x - (\Bc(A))(x) \cdot I\right)T_{b_x}^{-1}\right) = 0\]
by linearity and $A_x = (\Bc(A))(x) \cdot I$ by the injectivity of $\Bc$ (see Section \ref{compactness}).

Conversely, assume that $A_x$ is a multiple of the identity for every $x \in \beta\Omega \setminus \Omega$, i.e.~$A_x = \lambda_x \cdot I$ for some $\lambda_x \in \C$. Choose a net $(z_{\gamma})$ in $\Omega$ that converges to $x \in \beta\Omega \setminus \Omega$. Using Equation \eqref{Berezin_transform_shifted_operator} again, we get
\[b_x(\zeta)^{-1} \lim\limits_{z_{\gamma} \to x} (\Bc(A))(\phi_{z_{\gamma}}(\zeta)) = \lim\limits_{z_{\gamma} \to x} (\Bc(A_{z_{\gamma}}T_{b_{z_{\gamma}}}^{-1}))(\zeta) = \lambda_x \cdot (\Bc(T_{b_x}^{-1}))(\zeta)\]
for all $\zeta \in \Omega$. Equation \eqref{thm7_eq2} thus implies
\begin{equation} \label{thm7_eq3}
\lim\limits_{z_{\gamma} \to x} (\Bc(A))(\phi_{z_{\gamma}}(\zeta)) = \lambda_x
\end{equation}
for every $\zeta \in \Omega$ and $x \in \beta\Omega \setminus \Omega$. In particular, setting $\zeta = 0$, $\lambda_x = (\Bc(A))(x)$. Now assume that $\Bc(A)$ is not contained in $\VO_{\partial}(\Omega)$. Then there are $\epsilon > 0$ and two sequences $(z_n)_{n \in \N}$, $(w_n)_{n \in \N}$ with $\beta(z_n,w_n) \leq 1$ such that $w_n \to \partial\Omega$ and
\begin{equation} \label{thm7_eq4}
\abs{(\Bc(A))(z_n) - (\Bc(A))(w_n)} > \epsilon
\end{equation}
for all $n \in \N$. Since $\beta(0,\phi_{z_n}(w_n)) = \beta(z_n,w_n) \leq 1$ for all $n \in \N$, we can assume without loss of generality that the sequence $(\phi_{z_n}(w_n))_{n \in \N}$ converges to some $\zeta \in \overline{D(0,1)}$. Moreover, we may take a subnet $(z_{\gamma})$ of $(z_n)_{n \in \N}$ that converges to some $x \in \beta\Omega \setminus \Omega$. The corresponding subnet of $(w_n)_{n \in \N}$ we denote by $(w_{\gamma})$. Consider
\begin{align*}
\abs{(\Bc(A))(w_{\gamma}) - (\Bc(A))(z_{\gamma})} &\leq \abs{(\Bc(A))(w_{\gamma}) - (\Bc(A))(\phi_{z_{\gamma}}(\zeta))} + \abs{(\Bc(A))(\phi_{z_{\gamma}}(\zeta)) - (\Bc(A))(\phi_{z_{\gamma}}(0))}.
\end{align*}
The second term on the right-hand side tends to $0$ by Equation \eqref{thm7_eq3}. For the first term we observe that $\beta(w_{\gamma},\phi_{z_{\gamma}}(\zeta)) = \beta(\phi_{z_{\gamma}}(w_{\gamma}),\zeta)$ tends to $0$ by construction and since $\Bc(A)$ is uniformly continuous, the first term tends to $0$ as well. But this is a contradiction to \eqref{thm7_eq4}. Therefore $\Bc(A)$ has to be contained in $\VO_{\partial}(\Omega)$.
\end{proof}

\begin{cor} \label{cor4}
Let $A \in \Lc(A^p_{\nu})$ be band-dominated with $\Bc(A) \in \VO_{\partial}(\Omega)$. Then
\[\spec_{\ess}(A) = \bigcup\limits_{x \in \beta\Omega \setminus \Omega} (\Bc(A))(x) = (\Bc(A))(\beta\Omega \setminus \Omega) = \bigcap\limits_{r > 0} \overline{(\Bc(A))(\Omega \setminus D(0,r))}.\]
\end{cor}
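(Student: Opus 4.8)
The plan is to reduce everything to two ingredients already available: the spectral description of $\spec_{\ess}(A)$ via limit operators (Corollary \ref{cor3}) together with the explicit form of the limit operators under the hypothesis $\Bc(A) \in \VO_{\partial}(\Omega)$ (Theorem \ref{thm7}), and then a purely topological identity for the Stone--\v{C}ech extension of $\Bc(A)$. First I would note that the first two displayed expressions are literally the same set, so only the passage from $\spec_{\ess}$ to an image of $\Bc(A)$, and the final intersection formula, require argument.

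For the spectral part, since $\Bc(A) \in \VO_{\partial}(\Omega)$, Theorem \ref{thm7} yields $A_x = (\Bc(A))(x) \cdot I$ for every $x \in \beta\Omega \setminus \Omega$, so that $\spec(A_x) = \set{(\Bc(A))(x)}$. Feeding this into Corollary \ref{cor3} gives
\[\spec_{\ess}(A) = \bigcup_{x \in \beta\Omega \setminus \Omega} \spec(A_x) = \set{(\Bc(A))(x) : x \in \beta\Omega \setminus \Omega} = (\Bc(A))(\beta\Omega \setminus \Omega),\]
where $\Bc(A)$ is read via its continuous extension to $\beta\Omega$ (which exists because $\Bc(A)$ is bounded and continuous on $\Omega$, cf.~Section \ref{compactness}). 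This settles the first two equalities essentially for free.

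The remaining work is the third equality, which I would prove as a general statement about a bounded continuous function $f := \Bc(A)$ and its extension $\tilde{f}$ to $\beta\Omega$, namely $\tilde{f}(\beta\Omega \setminus \Omega) = \bigcap_{r > 0} \overline{f(\Omega \setminus D(0,r))}$. The one geometric input I would record first is that $(\Omega,\beta)$ is a complete, locally compact metric space with $\beta(0,z) \to \infty$ as $z \to \partial\Omega$, so by Hopf--Rinow the closed balls $\overline{D(0,r)}$ are compact and exhaust $\Omega$. Since compact subsets of the Hausdorff space $\beta\Omega$ are closed and any compact $K \subseteq \Omega$ lies in some $\overline{D(0,r)}$, a net $(z_{\gamma})$ in $\Omega$ converges to a point of $\beta\Omega \setminus \Omega$ if and only if $\beta(0,z_{\gamma}) \to \infty$; this equivalence is the hinge of the whole argument.

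With that in hand the inclusion $\tilde{f}(\beta\Omega \setminus \Omega) \subseteq \bigcap_{r > 0} \overline{f(\Omega \setminus D(0,r))}$ is direct: for $x \in \beta\Omega \setminus \Omega$ pick a net $z_{\gamma} \to x$; then $\beta(0,z_{\gamma}) \to \infty$, so for each fixed $r$ eventually $z_{\gamma} \in \Omega \setminus D(0,r)$, and continuity of $\tilde{f}$ gives $f(z_{\gamma}) \to \tilde{f}(x) \in \overline{f(\Omega \setminus D(0,r))}$. For the reverse inclusion, given $\lambda$ in the intersection I would choose, for each $n$, a point $z_n \in \Omega \setminus D(0,n)$ with $\abs{f(z_n) - \lambda} < \tfrac{1}{n}$; then $\beta(0,z_n) \to \infty$, so by compactness of $\beta\Omega$ a subnet converges to some $x$, which by the hinge lies in $\beta\Omega \setminus \Omega$, and continuity forces $\tilde{f}(x) = \lambda$. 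I expect the main obstacle to be exactly this topological hinge---pinning down that $\beta\Omega \setminus \Omega$ is detected by escaping every Bergman ball---since it is the only step that is not a formal consequence of the cited results and relies on completeness of the Bergman metric (as already used via Cartan--Hadamard in Lemma \ref{lem2}). Everything else is bookkeeping.
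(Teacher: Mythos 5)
Your proposal is correct and follows exactly the route the paper intends: the paper states this corollary without proof as an immediate consequence of Theorem \ref{thm7} (which gives $A_x = (\Bc(A))(x)\cdot I$, hence $\spec(A_x) = \set{(\Bc(A))(x)}$) combined with Corollary \ref{cor3}, and your argument is precisely that deduction together with the routine Stone--\v{C}ech verification of the final equality. Your ``topological hinge'' (nets converging to $\beta\Omega \setminus \Omega$ are exactly those escaping every Bergman ball) is the standard fact the paper leaves implicit; note only that it can be obtained without invoking completeness and Hopf--Rinow, since boundedness of $\Omega$, continuity of $\beta(0,\cdot)$ and the stated property $\beta(0,z) \to \infty$ as $z \to \partial\Omega$ already force the closed Bergman balls to be compact.
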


In what follows we will use the standard abbreviation $\tilde{f} := \Bc(T_f)$. Note that this definition is independent of $p$. A bounded function $f$ is then called of bounded mean oscillation, denoted as $f \in \VMO_{\partial}(\Omega) \cap L^{\infty}(\Omega)$, if
\[(\MO(f))(z) := \left(|f - \tilde{f}(z)|^2\right)^{\sim}(z) \to 0\]
as $z \to \partial\Omega$.

\begin{cor} \label{cor5}
Let $f \in \VMO_{\partial}(\Omega) \cap L^{\infty}(\Omega)$. Then
\[\spec_{\ess}(T_f) = \tilde{f}(\beta\Omega \setminus \Omega) = \bigcap\limits_{r > 0} \overline{\tilde{f}(\Omega \setminus D(0,r))}.\]
\end{cor}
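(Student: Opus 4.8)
The plan is to obtain the corollary as a direct application of Corollary \ref{cor4} to the operator $A = T_f$. Two hypotheses of that corollary must be verified: that $T_f$ is band-dominated, and that $\Bc(T_f) = \tilde{f}$ lies in $\VO_{\partial}(\Omega)$. The first is immediate from Proposition \ref{Projections_band_dominated} (cf.~Remark \ref{rem_Toeplitz}). The entire substance of the statement therefore lies in the implication $f \in \VMO_{\partial}(\Omega) \cap L^{\infty}(\Omega) \Rightarrow \tilde{f} \in \VO_{\partial}(\Omega)$; once this is established, Corollary \ref{cor4} yields $\spec_{\ess}(T_f) = \tilde{f}(\beta\Omega \setminus \Omega) = \bigcap_{r > 0} \overline{\tilde{f}(\Omega \setminus D(0,r))}$ verbatim.

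To show $\tilde{f} \in \VO_{\partial}(\Omega)$ I would bound $\Osc_z(\tilde{f})$ in terms of $\MO(f)(z)$. Since the Berezin transform is independent of $p$, I work with $p = 2$, where $\tilde{f}(z) = \int_{\Omega} f(u)\abs{k_z^{(2)}(u)}^2 \, \mathrm{d}v_{\nu}(u)$ and $\MO(f)(z) = \int_{\Omega} \abs{f(u) - \tilde{f}(z)}^2 \abs{k_z^{(2)}(u)}^2 \, \mathrm{d}v_{\nu}(u)$. Using $\int_{\Omega} (\abs{k_z^{(2)}}^2 - \abs{k_w^{(2)}}^2) \, \mathrm{d}v_{\nu} = \norm{k_z^{(2)}}^2 - \norm{k_w^{(2)}}^2 = 0$, I may subtract the constant $\tilde{f}(z)$ and write
\[\tilde{f}(z) - \tilde{f}(w) = \int_{\Omega} (f(u) - \tilde{f}(z))\abs{k_z^{(2)}(u)}^2\left(1 - \frac{\abs{k_w^{(2)}(u)}^2}{\abs{k_z^{(2)}(u)}^2}\right) \mathrm{d}v_{\nu}(u).\]
A Cauchy--Schwarz split with respect to the weight $\abs{k_z^{(2)}}^2 \, \mathrm{d}v_{\nu}$ then bounds $\abs{\tilde{f}(z) - \tilde{f}(w)}$ by $\sqrt{\MO(f)(z)}$ times the factor $\bigl(\int_{\Omega} (1 - \abs{k_w^{(2)}}^2/\abs{k_z^{(2)}}^2)^2 \abs{k_z^{(2)}}^2 \, \mathrm{d}v_{\nu}\bigr)^{1/2}$.

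The decisive step is to control this second factor uniformly over all $w$ with $\beta(z,w) \leq 1$. Here I would substitute $u = \phi_z(s)$: the transformation rules $(vii)$ and $(viii)$ give $\abs{k_z^{(2)}(\phi_z(s))}^2 \, \mathrm{d}v_{\nu}(\phi_z(s)) = \mathrm{d}v_{\nu}(s)$ and, on setting $\zeta := \phi_z(w)$ (so that $\beta(0,\zeta) = \beta(z,w) \leq 1$), the clean identity $\abs{k_w^{(2)}(\phi_z(s))}^2 / \abs{k_z^{(2)}(\phi_z(s))}^2 = \abs{k_{\zeta}^{(2)}(s)}^2$. The factor thus collapses to $\bigl(\int_{\Omega} (1 - \abs{k_{\zeta}^{(2)}(s)}^2)^2 \, \mathrm{d}v_{\nu}(s)\bigr)^{1/2}$. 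Since $\overline{D(0,1)}$ is a compact subset of $\Omega$ and $\zeta \mapsto \int_{\Omega} \abs{k_{\zeta}^{(2)}}^4 \, \mathrm{d}v_{\nu}$ is finite (by the Rudin--Forelli estimates) and continuous, this is bounded by a constant $C$ independent of $z$ and $w$. Hence $\Osc_z(\tilde{f}) \leq C\sqrt{\MO(f)(z)} \to 0$ as $z \to \partial\Omega$, giving $\tilde{f} \in \VO_{\partial}(\Omega)$.

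The main obstacle is precisely the reduction of the kernel ratio to $\abs{k_{\zeta}^{(2)}}^2$ together with the resulting uniform bound over $\overline{D(0,1)}$; everything else is bookkeeping plus the citation to Corollary \ref{cor4}. I expect the transformation-rule computation to be the only place demanding genuine care, and the admissibility hypotheses standing throughout the paper are exactly what guarantee finiteness and uniform boundedness of the relevant integrals on the compact set $\overline{D(0,1)}$.
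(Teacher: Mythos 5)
Your proposal is correct, and its skeleton is the same as the paper's: both reduce the corollary to Corollary \ref{cor4}, with band-dominatedness of $T_f$ supplied by Proposition \ref{Projections_band_dominated}, so that the whole content is the implication $f \in \VMO_{\partial}(\Omega) \cap L^{\infty}(\Omega) \Rightarrow \tilde{f} \in \VO_{\partial}(\Omega)$. The difference is how that implication is settled: the paper disposes of it by citation (\cite[Theorem F, Corollary 2]{BBCZ} in the unweighted case, \cite[Proposition 4.8]{BaCo} in the weighted case), whereas you prove it from scratch, and your argument checks out. Since $\norm{k_z^{(2)}} = \norm{k_w^{(2)}} = 1$, subtracting the constant $\tilde{f}(z)$ is legitimate; Cauchy--Schwarz against the probability measure $\abs{k_z^{(2)}}^2\,\mathrm{d}v_{\nu}$ gives $\abs{\tilde{f}(z)-\tilde{f}(w)} \leq \sqrt{(\MO(f))(z)}\,\bigl(\int_{\Omega}(1-\abs{k_w^{(2)}}^2/\abs{k_z^{(2)}}^2)^2\abs{k_z^{(2)}}^2\,\mathrm{d}v_{\nu}\bigr)^{1/2}$; and the substitution $u = \phi_z(s)$ together with rules $(iv)$, $(vii)$, $(viii)$ (concretely $h(\phi_z(s),z) = h(z,z)/h(s,z)$, $h(\phi_z(s),w) = h(z,z)h(s,\zeta)/(h(s,z)h(z,\zeta))$ and $h(w,w) = h(z,z)h(\zeta,\zeta)/\abs{h(z,\zeta)}^2$ for $\zeta = \phi_z(w)$) does collapse the second factor to $\bigl(\int_{\Omega}(1-\abs{k_{\zeta}^{(2)}(s)}^2)^2\,\mathrm{d}v_{\nu}(s)\bigr)^{1/2} = \bigl(\int_{\Omega}\abs{k_{\zeta}^{(2)}}^4\,\mathrm{d}v_{\nu}-1\bigr)^{1/2}$, uniformly bounded over $\beta(0,\zeta)=\beta(z,w)\leq 1$; hence $\Osc_z(\tilde{f}) \leq C\sqrt{(\MO(f))(z)} \to 0$ as $z \to \partial\Omega$. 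Your reduction to $p=2$ is harmless because, as the paper notes, $\tilde{f} = \Bc(T_f)$ is independent of $p$, and membership in $\VO_{\partial}(\Omega)$ concerns only the function $\tilde{f}$. Two minor remarks: the appeal to the Rudin--Forelli estimates is superfluous here, since $\overline{D(0,1)}$ is a compact subset of $\Omega$ and property $(i)$ plus continuity of the polynomial $h$ bound $\abs{h(s,\zeta)}$ below uniformly on $\overline{\Omega}\times\overline{D(0,1)}$, making the fourth-power integral bounded by inspection; and what you have written is in substance the classical B\'{e}koll\'{e}--Berger--Coburn--Zhu argument that the paper outsources. Your route buys a self-contained proof (including the weighted case) at the cost of a page; the paper's citation keeps the corollary to three lines.
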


\begin{proof}
By Corollary \ref{cor4} it suffices to show that $\tilde{f} = \Bc(T_f)$ is contained in $\VO_{\partial}(\Omega)$. For the unweighted case this was shown in \cite[Theorem F, Corollary 2]{BBCZ}. The same proof also applies to the weighted case and was (essentially) carried out in \cite[Proposition 4.8]{BaCo}.
\end{proof}

\bigskip

\noindent
Raffael Hagger\\
Institut f\"ur Analysis\\
Leibniz Universit\"at Hannover\\
Welfengarten 1\\
30167 Hannover\\
GERMANY\\
raffael.hagger@math.uni-hannover.de

\end{document}